\theoremstyle{plain}
\newtheorem{theorem}{Theorem}
\newtheorem{proposition}[theorem]{Proposition} 
\newtheorem{lemma}[theorem]{Lemma}
\newtheorem{remark}[theorem]{Remark}
\newtheorem{corollary}[theorem]{Corollary}
\newtheorem{definition}[theorem]{Definition}
\newtheorem{example}[theorem]{Example}
\newtheorem{conj}[theorem]{Conjecture}
\numberwithin{theorem}{section}
\DeclareMathOperator{\Lip}{Lip}
\DeclareMathOperator{\dist}{dist}
\DeclareMathOperator{\diam}{diam}
\DeclareMathOperator{\supp}{supp}
\DeclareMathOperator{\degg}{deg}
\DeclareMathOperator{\width}{width}
\numberwithin{equation}{section}
\title[Relative aspherical conjecture]{Relative aspherical conjecture and higher codimensional obstruction to positive scalar curvature}
\author{Shihang He}
\address{Key Laboratory of Pure and Applied Mathematics, School of Mathematical Sciences,
	Peking University, Beijing, 100871, P. R. China}
\email{hsh0119@pku.edu.cn}
\begin{document}
	
	\maketitle

	\begin{abstract}
		Motivated by the solution of the aspherical conjecture up to dimension 5 \cite{CL20}\cite{Gro20}, we want to study a relative version of the aspherical conjecture. We present a natural condition generalizing the model $X\times\mathbb{T}^k$ to the relative aspherical setting. Such model is closely related to submanifold obstruction of positive scalar curvature (PSC), and would be in similar spirit as \cite{HPS15}\cite{CRZ23} in codim 2 case. In codim 3 and 4, we prove results on how 3-manifold obstructs the existence of PSC under our relative aspherical condition, the proof of which relies on a newly introduced geometric quantity called the {\it spherical width}. This could be regarded as a relative version extension of the aspherical conjecture up to dim 5.
	\end{abstract}

	\section{Introduction}
	
	The topological obstruction to manifolds with positive scalar curvature (PSC) is a central problem in differential geometry and geometric topology. With the use of variational method and index theory, many results in this direction have been established in the past a few decades. One of an important kind of obstruction among these results is the submanifold obstruction. More precisely, it is cared about when a submanifold, of certain topological type and in a suitable position of the ambient space, becomes the PSC obstruction of the ambient manifold. 
	
	The first progress in this direction dates back to the pioneering work of Schoen-Yau \cite{SY79a}, where an {\it incompressible hypersurface obstruction} theorem was established. In fact, it was proved in \cite{SY79a} that if a 3-dimensional compact manifold contains an incompressible surface of positive genus, then this manifold admits no PSC metric. Later, Gromov-Lawson \cite{GL83} generalized this to higher dimension. Recently, Cecchini-R{\"{a}}de-Zeidler proved the following codimension 1 obstruction theorem, which serves as a stability version in the codimension 1 case.
	
	\begin{theorem}\label{thm: CRZ23a}(\cite{CRZ23})
		Let $Y$ be an orientable connected n-dimensional manifold with $n\le 7, n\ne 5$ and
		let $X\subset Y$ be a two-sided closed connected incompressible hypersurface which admits no PSC metric. Suppose that one of the following two conditions holds in the case $n\ge6$:
		
		(a) $Y$ is almost spin.
		
		(b) $X$ is totally nonspin.
		
		Then $Y$ admits no complete PSC metric.
	\end{theorem}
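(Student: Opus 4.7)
The plan is to argue by contradiction: assume $Y$ carries a complete PSC metric $g$, and aim to produce a PSC metric on $X$, contradicting the hypothesis. Since $X$ is two-sided and incompressible, I would first pass to the covering $\tilde Y \to Y$ associated to the inclusion $\pi_1(X) \le \pi_1(Y)$. The incompressibility guarantees that a chosen lift $\tilde X \subset \tilde Y$ is two-sided, embedded, and separates $\tilde Y$ into two unbounded components $\tilde Y_\pm$; moreover $\tilde X \to X$ is a diffeomorphism. The pulled-back metric on $\tilde Y$ is still complete and has positive scalar curvature.

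The central technical tool is a \emph{warped $\mu$-bubble} in $\tilde Y$. I would choose a smooth function $h$ on a thickened collar of $\tilde X$ that blows up to $\pm\infty$ at the two ends of the collar, and minimize the functional
\[
\mathcal{A}(\Omega) \;=\; \mathcal{H}^{n-1}(\partial^* \Omega) \;-\; \int_\Omega h\, d\mathrm{vol}_g
\]
over Caccioppoli sets $\Omega$ that separate the two ends. Completeness of $g$, together with the prescribed blow-up of $h$, forces the minimizer to sit in the interior of the collar, and the dimension bound $n \le 7$ ensures the reduced boundary $\Sigma = \partial^* \Omega$ is a smooth hypersurface homologous to $\tilde X$. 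The first variation yields $H_\Sigma = h|_\Sigma$ and the stability inequality, combined with the positivity of $\mathrm{scal}_g$ and a careful choice of $h$ (logarithmic in the signed distance to $\tilde X$, in the spirit of Gromov), produces a positive function $u$ on $\Sigma$ such that $u^{4/(n-2)} g|_\Sigma$ has positive scalar curvature; this is the standard Schoen--Yau--Gromov descent.

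The remaining task is to transfer PSC from $\Sigma$ to $X$. For $n = 3,4$ this is essentially automatic: $\Sigma$ is $2$- or $3$-dimensional and the topology is controlled by classical arguments (a Kazdan--Warner type dichotomy in dimension $2$, and direct manipulation of Seifert fibrations or the Gromov--Lawson surgery in dimension $3$). For $n = 6, 7$, $\Sigma$ and $\tilde X \cong X$ are cobordant via a compact region $W$ of $\tilde Y$, and one must modify $W$ to a sequence of PSC-preserving surgeries. Under condition (a), $W$ inherits a spin structure, and one applies the Gromov--Lawson--Schoen--Yau codimension $\ge 3$ surgery theorem to pass from $\Sigma$ to $X$ while preserving PSC; under condition (b), the totally nonspin hypothesis on $X$ propagates to the cobordism, and one invokes the corresponding surgery / bordism theorem (Rosenberg--Stolz) in the nonspin setting. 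Either way, $X$ inherits a PSC metric, contradicting the hypothesis.

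The main obstacle is twofold. First, the $\mu$-bubble does not deliver $X$ itself but only some homologous hypersurface $\Sigma$, so the surgery step must be able to reduce $\Sigma$ to $X$ along a PSC cobordism; this is precisely why the spin / totally nonspin dichotomy is forced in the range $n \ge 6$, since these are the two regimes where codimension-$\ge 3$ surgery theorems for PSC are available. Second, the exclusion $n = 5$ reflects the fact that $X$ would then be $4$-dimensional, and PSC on $4$-manifolds is controlled by Seiberg--Witten invariants that are not preserved by the relevant surgeries, so neither (a) nor (b) suffices to close the argument. I expect the bulk of the real technical work to lie in calibrating the warping function $h$ so that the $\mu$-bubble minimizer is simultaneously nonempty, interior, and quantitatively close to $\tilde X$, and in verifying uniform applicability of the surgery step across the mixed spin / nonspin regimes.
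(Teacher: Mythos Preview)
This theorem is not proved in the present paper; it is quoted from \cite{CRZ23} in the introduction as background, and the paper gives no argument for it whatsoever. So there is no ``paper's own proof'' to compare your proposal against.

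That said, your sketch is a reasonable outline of the strategy actually used in \cite{CRZ23}: pass to the cover trivializing $\pi_1(X)\to\pi_1(Y)$, run a $\mu$-bubble (or band-width) argument to produce a PSC hypersurface $\Sigma$ homologous to the lift of $X$, and then use a cobordism/surgery argument to push PSC from $\Sigma$ to $X$, with the spin/totally nonspin dichotomy governing which surgery theorem applies in dimensions $6,7$. A few of your details are loose (for instance, incompressibility alone does not immediately give that the lift separates $\tilde Y$ into two \emph{unbounded} pieces; one typically passes to a further cyclic cover or otherwise arranges a band structure), but the overall architecture matches the cited reference rather than anything done in the paper under review.
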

	Theorem \ref{thm: CRZ23a} is an important generalization of Schoen-Yau's result. Codimension 1 results of similar version has also be studied by various authors, In \cite{Zei17}, Zeidler established an index theoretic version of this kind of obstruction. Recently, in \cite{CLSZ21}, a similar type result for hypersurface lying in certain manifold class called the $\mathcal{C}_{\degg}$ class has also been proved. Notice that the incompressible condition among these results plays crucial role in describing the suitable position of the hypersurface, since without this condition one could easily construct counterexample such that the theorem fail.
	
	In codimension 2, the first study was carried out by Gromov-Lawson \cite{GL83}, where they proved $X\times \mathbb{R}^2$ carries no complete metric with uniformly positive scalar curvature when $X$ is an enlargeable spin manifold. However, to consider more general settings, one could not expect a single incompressible condition be enough in codimension 2 case. In fact, it is obvious that $X$ is incompressible in $\mathbb{S}^2\times X$, but the latter always admits PSC metric. To rule out this case, Hanke-Pape-Schick \cite{HPS15} found a natural condition on the second homotopy group, and by using a theorem in \cite{HS06}, they have generalized the Theorem of \cite{GL83} into the following form:
	
	\begin{theorem}\label{thm: HPS15}(\cite{HPS15})
		Let $Y$ be a closed connected spin manifold. Assume
		that $X\subset Y$ is a codimension two submanifold with trivial normal bundle and that
		
		(1)$\pi_1(X)\longrightarrow\pi_1(Y)$ is injective
		
		(2)$\pi_2(X)\longrightarrow\pi_2(Y)$ is surjective.
		
		Assume that the Rosenberg index of $X$ does not vanish: $0\ne\alpha(X) \in K_{*}(C^*\pi_1(X))$
		
		Then $Y$ admits no PSC metric.
	\end{theorem}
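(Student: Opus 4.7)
The approach follows the codimension-two philosophy of Hanke--Schick. Set $\pi := \pi_1(X)$, and let $\hat{Y} \to Y$ denote the cover corresponding to the subgroup $\pi \hookrightarrow \pi_1(Y)$, which exists by (1). A chosen lift of $X$ is a codimension-two submanifold $\hat{X} \subset \hat{Y}$ with trivial normal bundle, and the inclusion $\hat{X} \hookrightarrow \hat{Y}$ induces an isomorphism $\pi_1(\hat{X}) \xrightarrow{\cong} \pi_1(\hat{Y}) = \pi$. Since $\pi_n(\hat{Y}) = \pi_n(Y)$ for $n \geq 2$, condition (2) guarantees that $\pi_2(\hat{X}) \to \pi_2(\hat{Y})$ is surjective, and a glance at the long exact sequence of the pair shows $(\hat{Y},\hat{X})$ is $2$-connected. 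In particular, the classifying map of $\hat{Y}$ restricts, up to homotopy, to the classifying map $c_{\hat{X}} : \hat{X} \to B\pi$.

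Next, I would invoke the codimension-two index machinery of \cite{HS06}. The trivial normal bundle provides a tubular neighborhood $\hat{X} \times D^2 \subset \hat{Y}$, and the Hanke--Schick construction identifies the higher index of the spin Dirac operator on $\hat{Y}$ (with respect to the classifying map $\hat{Y} \to B\pi$) with the image of $\alpha(X) \in K_*(C^*\pi)$ under a Bott-type map arising from the trivial $\mathbb{R}^2$-direction normal to $\hat{X}$. In particular, this higher index is non-zero whenever $\alpha(X) \neq 0$. The $2$-connectedness of the pair, together with the spin hypothesis on $Y$ (which descends to $\hat{Y}$), are the homotopical/bundle-theoretic inputs that make this identification available.

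Now suppose for contradiction that $Y$ admits a PSC metric. This lifts to a complete metric of uniformly positive scalar curvature on $\hat{Y}$, so the Lichnerowicz--Rosenberg vanishing principle, in the possibly non-compact form used in \cite{HS06}, forces the higher Dirac index on $\hat{Y}$ to vanish. Combining this with the previous step yields $\alpha(X) = 0$, contradicting the hypothesis.

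The main obstacle lies in the second step: the KK-theoretic bookkeeping that matches the ambient Dirac index on $\hat{Y}$ with the Rosenberg index of the codim-two submanifold $\hat{X}$, via the tubular neighborhood. Conditions (1) and (2) are precisely calibrated to make these inputs compatible; in particular, the $\pi_2$-surjectivity rules out the archetypal counterexample $Y = S^2 \times X$, in which $X$ is incompressible but $Y$ nevertheless carries a PSC metric.
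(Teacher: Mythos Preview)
This theorem is not proved in the present paper: it is quoted from \cite{HPS15} as motivation and background, with no proof supplied here. So there is no ``paper's own proof'' to compare your proposal against.

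That said, your sketch is broadly the strategy of the original source \cite{HPS15}: pass to the cover $\hat{Y}$ associated to $\pi_1(X)$, use conditions (1) and (2) to make the pair $(\hat{Y},\hat{X})$ $2$-connected, and then apply an index-theoretic transfer/vanishing argument to relate the higher index on $\hat{Y}$ to $\alpha(X)$. One point of care: the codimension-two index identification you attribute to \cite{HS06} is really the content of \cite{HPS15} itself (the paper of Hanke--Pape--Schick), while \cite{HS06} is the earlier enlargeability-and-index paper that supplies some of the analytic ingredients. Your second step, as written, is therefore essentially a restatement of the theorem rather than a reduction to an independent black box; if you want to present this as a proof you would need to unpack the actual construction of the transfer map and the vanishing argument from \cite{HPS15}.
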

	
	Related stability version of this theorem was also verified by  Cecchini-R{\"{a}}de-Zeidler up to dimension 7.
	
	\begin{theorem}\label{Thm: CRZ23b}(\cite{CRZ23})
		Let $Y$ be a $n$-dimensional closed connected manifold, $n = 3,4,5,7$. Assume
		that $X\subset Y$ is a codimension two submanifold with trivial normal bundle and that
		
		(1)$\pi_1(X)\longrightarrow\pi_1(Y)$ is injective
		
		(2)$\pi_2(X)\longrightarrow\pi_2(Y)$ is surjective.

            Assume $X$ admits no PSC metric. Then $Y$ admits no PSC metric.
	\end{theorem}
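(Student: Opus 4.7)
My plan is to suppose $Y$ carries a PSC metric $g$ and derive a contradiction by producing a PSC metric on a codimension-two slice that inherits the PSC obstruction of $X$. The idea is to iterate the codim-1 $\mu$-bubble machinery underlying Theorem \ref{thm: CRZ23a} twice, once for each direction in the trivial normal bundle of $X$, with conditions (1) and (2) ensuring that both slicings can be globalized.

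First I would pass to the Galois cover $\widehat{Y}\to Y$ associated to $\pi_1(X)\le\pi_1(Y)$. Condition (1) ensures that $X$ lifts homeomorphically to $\widehat{X}\subset\widehat{Y}$ with $\pi_1(\widehat{X})\xrightarrow{\cong}\pi_1(\widehat{Y})$, and the pulled-back metric still has PSC. The triviality of the normal bundle of $\widehat{X}$ gives a smooth submersion $(f_1,f_2)\colon U\to\mathbb{R}^2$ on a tubular neighborhood of $\widehat{X}$, with $\widehat{X}=(f_1,f_2)^{-1}(0,0)$. I would then extend $(f_1,f_2)$ globally to smooth functions on $\widehat{Y}$: here condition (2), surjectivity of $\pi_2(X)\to\pi_2(Y)$, is what kills the primary obstruction to extending the ``angle map'' $(f_1,f_2)/|(f_1,f_2)|\colon U\setminus\widehat{X}\to S^1$ across $\widehat{Y}\setminus\widehat{X}$, so that each $f_i$ can be extended and their common zero locus cuts out $\widehat{X}$.

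Next I would run two successive warped $\mu$-bubble constructions in the spirit of Gromov \cite{Gro20}. Using $f_1$ with an appropriately chosen prescribing function, find a stable closed $\mu$-bubble $\Sigma_1\subset\widehat{Y}$ of dimension $n-1$ that is forced to separate the two ends determined by $f_1$ and that contains (after homotopy) a copy of $\widehat{X}$; the stability inequality combined with warping by $f_1$ produces a PSC metric $g_1$ on $\Sigma_1$, with full regularity since $n\le 7$. Repeating on $(\Sigma_1,g_1)$ with $f_2|_{\Sigma_1}$ yields a stable $\mu$-bubble $\Sigma_2\subset\Sigma_1$ of dimension $n-2=\dim X$ carrying a PSC metric.

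The final step is to show $\Sigma_2$ inherits the no-PSC obstruction of $X$. Using the injectivity/surjectivity in (1), (2) together with a homological/obstruction-theoretic comparison, $\Sigma_2$ should be $\pi_1$- and $\pi_2$-equivalent to $\widehat{X}$, hence share the non-PSC property of $X$ in the relevant dimension: surface classification for $n=4$, Schoen-Yau/Perelman for $n=5$, and the Chodosh-Li/Gromov aspherical result \cite{CL20}, \cite{Gro20} for $n=7$ (the case $n=3$ is degenerate as $\dim X=1$, and $n=6$ is excluded precisely because no corresponding aspherical PSC obstruction is known in dimension $4$). The main obstacle I expect is controlling the topology of the iterated $\mu$-bubbles: they could acquire extra components or wander far from $\widehat{X}$, and a careful choice of prescribing functions together with a descent argument using (2) is needed to ensure the obstruction genuinely concentrates on $\Sigma_2$ rather than leaking onto a spurious PSC piece.
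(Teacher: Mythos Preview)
This theorem is not proved in the present paper: it is quoted from \cite{CRZ23} as background and motivation for the relative aspherical conjecture, and the paper supplies no argument for it. There is therefore no proof here against which to compare your proposal.

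That said, your outline contains a genuine gap in its final step. A twice-iterated $\mu$-bubble $\Sigma_2$ has no reason to be ``$\pi_1$- and $\pi_2$-equivalent to $\widehat{X}$'': $\mu$-bubbles are variational objects whose topology is a priori uncontrolled, and nothing in the construction forces $\Sigma_2$ to resemble $X$ homotopically. Even granting some homotopical comparison, sharing $\pi_1$ and $\pi_2$ does not in general transfer a no-PSC property between manifolds. Your dimension-by-dimension justification also does not match the hypotheses: for $n=7$ you invoke the aspherical results of \cite{CL20}, \cite{Gro20}, but the $5$-manifold $X$ is only assumed to admit no PSC metric, not to be aspherical. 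The actual argument in \cite{CRZ23} does not attempt to identify the bubble with $X$; rather it uses a generalized surgery/bordism mechanism to transport the PSC obstruction class of $X$ to the slice, which is a substantially different idea from the homotopy comparison you sketch.
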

	
	Obviously, the condition describing the position of $X$ in $Y$ in Theorem \ref{thm: HPS15} and Theorem \ref{Thm: CRZ23b} is equivalent to the relative homotopy condition: $\pi_2(Y,X) = 0$.
	
	In higher codimension, it is an interesting problem to ask what is the effect of the submanifold to the PSC obstruction of the ambient space. Here are several examples: the case that $Y$ turns out to be a fiber bundle with fiber $X$ over the base space $B$ provides a special setting of this problem, and this has been studied by Zeidler \cite{Zei17} and the author \cite{He23} by using index theory and variational method respectively. In \cite{WXY21}, a high codimensional cube inequality was established, which describes the effect of the PSC obstruction of the submanifold to the multi-distance spread of the ambient cube-like manifold. This also reflects certain interaction between the submanifold PSC obstruction and the geometry of the ambient space.
	
	Another notable series of results of high codimensional PSC obstruction are obtained by constructing transfer map for certain generalized homology group from the ambient space to the submanifold, and one may see \cite{Eng18}\cite{NSZ21}\cite{Zei17} for progress in this direction. For example, the following was proved in \cite{Zei17} by Zeidler.
	
	\begin{theorem}(\cite{Zei17})
		Let $X$ be a codimension $k$ submanifold in $Y$ with trivial normal bundle, with $\pi_i(Y)=0, i=2,3,\dots,k$. Suppose $\hat{A}(X)\ne 0$ and $\pi_1(Y)$ satisfies the Strong Novikov Conjecture, then $0\ne\alpha(Y) \in K_{*}(C^*\pi_1(Y))$.
	\end{theorem}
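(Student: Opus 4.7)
The plan is to build a $K$-homology transfer from $Y$ to $X$, lift it through the classifying maps using the connectivity hypothesis, and conclude via the Strong Novikov Conjecture. Since $\alpha(Y)$ is assumed defined, $Y$ is spin, and the trivialized normal bundle equips $X$ with an induced spin structure, so that $[D_X]$ and $\hat A(X)$ are well-defined. First I would fix a tubular embedding $X\times D^k \hookrightarrow Y$ and form the Pontryagin--Thom collapse $c : Y_+ \to \Sigma^k X_+$; the framing promotes $c$ to a Gysin/umkehr map $c_! : K_n(Y) \to K_{n-k}(X)$ which, by standard index theory, satisfies $c_![D_Y] = [D_X]$.

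Let $\Gamma := \pi_1(Y)$ with classifying map $f_Y : Y \to B\Gamma$, and $f_X : X \to B\pi_1(X)$. The hypothesis $\pi_i(Y)=0$ for $2\le i\le k$ makes $f_Y$ a $(k+1)$-connected map. The technical heart of the argument, and the step I expect to be the main obstacle, is to upgrade $c_!$ to a transfer defined at the level of classifying spaces: one constructs a map
\[
\tilde c_! : K_n(B\Gamma) \longrightarrow K_{n-k}(B\pi_1(X))
\]
such that $\tilde c_!\circ (f_Y)_* = (f_X)_*\circ c_!$. Heuristically, the framed codimension-$k$ submanifold $X\subset Y$ is encoded by a $k$-dimensional piece of data (a stable cohomotopy or framed-cobordism class on $Y$), and the $(k+1)$-connectivity of $f_Y$ allows one to propagate this data from $Y$ to $B\Gamma$, so that the Pontryagin--Thom collapse factors through $B\Gamma$ up to stable homotopy. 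Verifying that the relevant obstruction classes vanish in the required range is precisely where the specific connectivity hypothesis on $\pi_i(Y)$ is used, and this is where most of the work will go.

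Granted the existence and compatibility of $\tilde c_!$, I obtain
\[
\tilde c_!\bigl((f_Y)_*[D_Y]\bigr) = (f_X)_*[D_X] \in K_{n-k}(B\pi_1(X)).
\]
The image of the right-hand side under the collapse $B\pi_1(X) \to \mathrm{pt}$ is the integer $\hat A(X) \ne 0$, so $(f_X)_*[D_X] \ne 0$ and hence $(f_Y)_*[D_Y] \ne 0$ in $K_n(B\Gamma)$. Finally, the Strong Novikov Conjecture for $\Gamma$ asserts that the assembly map $\mu : K_n(B\Gamma) \to K_n(C^*\Gamma)$ is (rationally) injective, and since $\alpha(Y) = \mu\bigl((f_Y)_*[D_Y]\bigr)$, we conclude $\alpha(Y) \ne 0$, as required.
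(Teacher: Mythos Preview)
The paper does not prove this theorem. It is quoted in the Introduction as a result of Zeidler \cite{Zei17}, serving purely as background and motivation for the relative aspherical conjecture; no argument for it is given or sketched anywhere in the text. So there is no ``paper's own proof'' to compare against.

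That said, your outline is broadly the correct strategy and matches the approach in the literature (Zeidler, Engel, Nitsche--Schick--Zeidler): build a transfer/wrong-way map in $K$-homology from $Y$ to $X$ via the Pontryagin--Thom collapse along the framed normal bundle, use the connectivity hypothesis $\pi_i(Y)=0$ for $2\le i\le k$ to lift the collapse through the classifying map $Y\to B\pi_1(Y)$, and then invoke injectivity of the assembly map. Your identification of the main technical point --- extending $c_!$ to $\tilde c_!$ on $K_*(B\Gamma)$ --- is accurate; this is exactly where the connectivity hypothesis is consumed, and it is the nontrivial part of Zeidler's argument. One small caveat: you should be explicit that $Y$ is closed and spin (so that $[D_Y]$ and $\alpha(Y)$ are defined), and that the Strong Novikov Conjecture gives \emph{rational} injectivity of $\mu$, which suffices since $\hat A(X)\in\mathbb{Z}$ is a nonzero integer and hence $(f_Y)_*[D_Y]$ is rationally nonzero.
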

	
	These results illustrate how $X$, a codimension $k$ submanifold affect the PSC obstruction for a sufficient connected ambient space $Y$. Intuitively, such requiement for the ambient space is designed to rule out the case of $\mathbb{S}^k\times X^n$. However, to some extent, the sufficient connected condition for $Y$ may give a priori constraint for itself, and it seems not so clear how $Y$ interact with $X$ in this case.
	
	In this work, we hope to find new natural condition which would provide PSC obstruction from high codimensional submanifold. The condition we consider, the {\it relative aspherical condition}, is a {\it homotopical condition} of {\it relative type}, which we think may reflect the interaction of the ambient space and the submanifold in a better way. The definition is as follows:
	
	\begin{definition}\label{defn: relative aspherical}
		
		Let $X^n$ be a submanifold of $Y^{n+k}$. We say that
		
		(1) $Y$ is aspherical relative to $X$, if $\pi_i(Y,X) = 0$ for $i = 2, 3, \dots$.
		
		(2) $Y$ is weakly aspherical relative to $X$, if $\pi_i(Y,X) = 0$ for $i = 2, 3, \dots , k$.
	\end{definition}

    \begin{remark}
        By looking at the long exact sequence of the homotopy group, it is clear that we have the following equivalent definition, which would also be useful sometimes:
        
        (1) $Y$ is aspherical relative to $X$, if

        \quad(a) $\pi_1(X)\longrightarrow \pi_1(Y)$ is injective.

        \quad(b) $\pi_i(X)\longrightarrow\pi_i(Y)$ is an isomorphism for $i\ge 2$.
        
        (2) $Y$ is weakly aspherical relative to $X$, if

        \quad(a) $\pi_1(X)\longrightarrow \pi_1(Y)$ is injective.

        \quad(b) $\pi_i(X)\longrightarrow\pi_i(Y)$ is an isomorphism for $i = 2,\dots,k-1$.

        \quad(c) $\pi_k(X)\longrightarrow\pi_k(Y)$ is surjective.
    \end{remark}
	Compared with earlier results for high codimensional PSC obstruction, since we only concern the behavior of the submanifold relative to the ambient space, we need not to make any a priori assumption on the topology of the ambient space. Moreover, our assumption is topologically intrinsic, and no extra structural or geometric condition is required in this setting. Now let us formulate our {\it relative aspherical conjecture}, stated under condition (1) and (2) in Definition \ref{defn: relative aspherical} respectively:
	
	\begin{conj}\label{conj: RASC}(Full Relative Aspherical Conjecture)
		Let $Y^{n+k}$ be a compact manifold and $X^n$ an codimension $k$  submanifold with trivial normal bundle, such that $Y$ is aspherical relative to $X$, $n\ne 4$. If $X$ admits no PSC metric, then $Y$ admits no PSC metric.
	\end{conj}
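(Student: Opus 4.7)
The plan is to attempt Conjecture \ref{conj: RASC} by induction on the codimension $k$, anchoring the induction on the codimension 1 stability result (Theorem \ref{thm: CRZ23a}) and the codimension 2 stability result (Theorem \ref{Thm: CRZ23b}). For $k=1$ the relative aspherical condition forces $X$ to be a two-sided $\pi_1$-injective incompressible hypersurface with $\pi_i(Y,X)=0$ for $i\ge 2$, exactly the hypotheses available in Theorem \ref{thm: CRZ23a} after one verifies the spin/totally nonspin alternative in ambient dimension $\ge 6$. For $k=2$ the condition of Definition \ref{defn: relative aspherical} unpacks to the $\pi_1$-injective, $\pi_2$-surjective hypotheses of Theorem \ref{Thm: CRZ23b}.

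For the inductive step, assume the conjecture holds in all codimensions $<k$, and suppose for contradiction that $Y^{n+k}$ admits a PSC metric. Using the trivial framing of the normal bundle of $X$ together with the relative aspherical condition $\pi_i(Y,X)=0$ for $i=2,\ldots,k$, one can build a map $\varphi:Y\to S^{k}$ (or, after passing to a suitable cover associated to $\pi_1(X)\hookrightarrow \pi_1(Y)$, to $\mathbb{R}^{k}$) which Pontryagin--Thom collapses onto a class representing $X$. I would then extract, via a warped $\mu$-bubble minimization in the spirit of Chodosh--Li and Gromov applied to a height component of $\varphi$, a codimension-one PSC hypersurface $W^{n+k-1}\subset Y$ still containing a copy of $X$ with trivial normal bundle of codimension $k-1$ inside $W$, and such that $(W,X)$ remains (at least weakly) aspherical relative to $X$. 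Applying the inductive hypothesis to $(W,X)$ would then close the loop.

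\textbf{The main obstacle.} The principal difficulty is that a $\mu$-bubble is only controlled homologically, whereas the relative aspherical condition is strictly homotopical: even if one arranges $W$ to contain $X$, the vanishing $\pi_i(W,X)=0$ in the relevant range is a genuinely stronger statement and can easily fail for a generic minimizer. This is where I expect the \emph{spherical width} introduced in the paper to be the decisive geometric input: it should play the role of a quantitative analogue of Gromov's band width, a bound on which would both certify the existence of a $\mu$-bubble in the correct homotopy class and constrain the homotopy type of the bubble relative to $X$. Establishing such a spherical width inequality in full codimension is presumably where the program stalls at present, which is why the paper itself can only treat the $n=3$, $k=3,4$ cases. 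The exclusion $n\ne 4$ in the conjecture is a secondary subtlety that mirrors the regularity failure of minimal slicing in ambient dimension $5$, and in the range where slicing is unavailable one would have to fall back on Dirac-operator / Rosenberg-index methods under an additional spin hypothesis, in the style of Theorem \ref{thm: HPS15}.
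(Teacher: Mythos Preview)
The statement is a \emph{conjecture}, and the paper does not prove it; it only establishes the partial cases recorded in Theorem \ref{thm1}, Theorem \ref{thm2}, and Corollary \ref{cor1}. So you should not expect to produce a complete proof, and indeed your own proposal correctly identifies where it stalls. Two concrete points deserve emphasis.

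First, your induction does not get off the ground cleanly even at the base. For $k=1$, Theorem \ref{thm: CRZ23a} excludes ambient dimension $5$ and, in dimensions $\ge 6$, requires the spin/totally-nonspin dichotomy, neither of which is supplied by the bare relative aspherical hypothesis. For $k=2$, Theorem \ref{Thm: CRZ23b} is stated only for $n=3,4,5,7$. So the anchors you cite already cover strictly less than the full conjecture in those codimensions.

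Second, and more seriously, the inductive step is not the mechanism the paper uses and is unlikely to work as written. A $\mu$-bubble $W$ produced from a height function of your map $\varphi$ has no reason to \emph{contain} $X$; it is merely a separating hypersurface in the correct homology class. Even after homotoping or choosing $W$ cleverly, forcing $\pi_i(W,X)=0$ for all $i\ge 2$ is a genuinely new topological constraint that $\mu$-bubble theory does not supply. The paper's spherical width is \emph{not} used to repair the homotopy type of a bubble; rather, the paper bypasses induction on $k$ entirely. It passes to the cover $\tilde Y$ with $\pi_1(\tilde Y)=\pi_1(X)$, uses the full relative aspherical condition to build a retraction $\tilde Y\setminus U_\epsilon\to S^{k-1}\times X_0$ (this is where obstruction theory and all the higher $\pi_i(Y,X)=0$ are spent), and then produces, far from $X_0$, a $(k-1)$-sphere of bounded diameter with nonzero linking number $\zeta$ against $X_0$. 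Theorem \ref{thm: sphere width estimate} says such spheres must have large diameter when they sit far from $X_0$, giving the contradiction. Your proposal would benefit from abandoning the codimension induction and instead engaging directly with this linking/width dichotomy, which is where the actual content lies.
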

	
	\begin{conj}\label{conj: RASC strong}(Strong Relative Aspherical Conjecture)
		Let $Y^{n+k}$ be a compact manifold and $X^n$ an codimension $k$  submanifold with trivial normal bundle, such that $Y$ is weakly aspherical relative to $X$, $n\ne 4$. If $X$ admits no PSC metric, then $Y$ admits no PSC metric.
	\end{conj}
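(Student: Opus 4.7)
The plan is to emulate the classical descent approach to the aspherical conjecture: assume that $Y$ carries a metric of positive scalar curvature and derive a contradiction by producing, via successive slicings, a submanifold that is forced by the relative asphericity to inherit enough topology of $X$ to contradict the hypothesis that $X$ admits no PSC metric. The trivial normal bundle of $X$ in $Y$ gives $k$ linearly independent normal sections, which on a tubular neighbourhood produces a smooth map to $\mathbb{R}^k$ detecting the normal coordinates. The weak relative aspherical condition is exactly what is needed to extend this structure homotopically across $Y$, at least through dimension $k$, and so reduces the situation qualitatively to the product model $X \times D^k$ up to a degree-one map.

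From this, I would set up an iterative slicing scheme. Pick the cover $\tilde Y \to Y$ corresponding to $\pi_1(X) \hookrightarrow \pi_1(Y)$ (injectivity is built into the hypothesis). In $\tilde Y$ we have a lift of $X$, and the weak relative asphericity upgrades this to a proper map $\tilde Y \to \tilde X \times \mathbb{R}^k$ whose behaviour on $\pi_i$ for $i \le k$ is controlled. Choosing a suitable warping function in the $\mathbb{R}^k$-direction, I would run Gromov-style $\mu$-bubble minimizations in $\tilde Y$ with boundary conditions arranged so that any minimizer $\Sigma$ projects essentially onto $\tilde X$. Iterating the $\mu$-bubble construction $k$ times reduces the codimension to zero, leaving a submanifold whose homotopy class is constrained by $X$, and which inherits a PSC metric by the standard second variation.

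The subtle point, and the reason a naive slicing does not suffice, is the presence of model obstructions like $S^k \times X$ which satisfies only partial relative asphericity but does admit PSC. The role of the spherical width would be to quantify exactly the normal component coming from such sphere factors: it should be a geometric invariant bounded above by the scalar curvature, but forced to be infinite on the appropriate cover under full (resp. weak) relative asphericity. Encoding the destruction of spherical normal directions through a single geometric quantity is, I expect, the technical innovation needed to pass from the $S^k$-factor model to the more general relative aspherical setting.

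The main obstacles I foresee are twofold. First, $\mu$-bubble regularity restricts the argument to low ambient dimension (roughly $\le 7$ or $8$), so the conjecture in full generality is likely beyond a purely variational approach; a Dirac operator version would handle high dimensions but forces spin hypotheses that are absent here. Second, and more seriously, after the iterated slicings one controls only the homotopy type of $\Sigma$ inside the cover, not on the nose, and transferring the non-existence of PSC from $X$ to $\Sigma$ requires the PSC obstruction to be preserved under rationally nontrivial maps of aspherical type, in the spirit of \cite{CL20} and \cite{Gro20}. I therefore expect the approach to succeed first under restrictions on $n$ and $k$ (and possibly an asphericity assumption on $X$ itself, e.g.\ $X$ a $3$-manifold), matching the scope announced in the abstract, with the fully general conjecture remaining open.
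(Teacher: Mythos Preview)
This statement is a conjecture, not a theorem; the paper does not prove it and explicitly leaves it open. Your proposal should therefore be read against the paper's \emph{partial} results (Theorems~1.8 and~1.9, Corollary~1.10), and in broad outline---pass to the cover associated to $\pi_1(X)\hookrightarrow\pi_1(Y)$, run iterated $\mu$-bubble slicings, use a spherical-width quantity to rule out spurious $S^k$-factors, and accept dimension and topology restrictions on $X$---your sketch tracks the paper's strategy closely, including the honest expectation that only restricted cases will go through.

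There is, however, one substantive gap. You assert that the weak relative aspherical condition ($\pi_i(Y,X)=0$ only for $i\le k$) is ``exactly what is needed to extend this structure homotopically across $Y$'' and hence reduce to the product model up to a degree-one map. That is not so. In the paper's argument under the \emph{full} condition (Proposition~4.6), the key step is first a Whitehead retraction $\pi:\tilde Y\to X_0$ and then an obstruction-theoretic lift $\tilde\pi:\tilde Y\setminus U_\epsilon\to S^{k-1}\times X_0$; the obstructions to the lift live in $H^{i+1}(\tilde Y,X_0;\pi_i(S^{k-1}))$ for \emph{all} $i$, and since $\pi_i(S^{k-1})$ is nonzero for infinitely many $i$, one needs $H^{i+1}(\tilde Y,X_0)=0$ in every degree. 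Weak asphericity gives no control beyond $i=k$, so neither the retraction nor the lift is available. This is precisely why the paper separates the two conjectures: Theorem~1.8 assumes full relative asphericity and runs essentially the argument you describe, while Theorem~1.9 handles the weak case only under the extra hypothesis of a nonzero-degree map $\phi:X\to Z$ with $Z$ aspherical and enlargeable, so that the needed map $\Phi:\tilde Y\to Z$ can be built from the asphericity of the \emph{target} (via the $K(\pi,1)$ extension property) rather than from relative asphericity of the pair. Your proposal conflates these two regimes and would stall at the extension step under the weak hypothesis alone.
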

	
	Obviously, since Conjecture \ref{conj: RASC strong} assumes weaker condition, its conclusion would be stronger than Conjecture \ref{conj: RASC}. We remind the readers that Conjecture \ref{conj: RASC strong}, the strong version conjecture is proposed for weakly relative aspherical condition, which only requires vanishing relative homotopy group up to dimension $k$.
	
	At the very begining point, we would like to point out Conjecture \ref{conj: RASC} and \ref{conj: RASC strong} actually generalizes the aspherical conjecture of absolute version, as well as various interesting stability type conjucture {\it into a single setting}. The following examples illustrate this point:
	
	(1) If $X$ is a point, Conjecture \ref{conj: RASC} obviously implies the {\it aspherical conjecture}, which was recently verified in \cite{CL20}\cite{Gro20} up to dimension $5$. If $X$ is $S^1$, then Conjecture \ref{conj: RASC} in this case is also equivalent to the aspherical conjecture, since the fundamental group of a closed aspherical manifold is torsion free. Here one should note that the codimension $k$ relative aspherical conjecture (relative to $S^1$) implies the aspherical conjecture of dimension $k+1$.
	
	(2) Let $Y=X\times S^1$, then Conjecture \ref{conj: RASC} implies the Rosenberg $S^1$ stability conjecture, see \cite{Ros07}\cite{R23}.
	
	(3) In codimension 2, the Conjecture \ref{conj: RASC strong} is true for a large class of manifold, owing to the results of \cite{HPS15} (Theorem \ref{thm: HPS15}) and \cite{CRZ23} (Theorem \ref{Thm: CRZ23b}).
	
	(4) If $E$ is a $F$ bundle over an aspherical manifold, then $E$ is aspherical relative to $F$. One may refer to \cite{Zei17} for related results. As a special case, when $E=F\times B$, $B$ is a closed aspherical manifold, then $E$ is also aspherical relative to $F$.
	
	\quad
	
	Therefore, for further investigation of the interaction of PSC obstruction through high codimension, Conjecture \ref{conj: RASC} and \ref{conj: RASC strong} turn out to be problems worth studying. The case that the $k\ge3$ has not been well understood yet. A difficulty lies in that, even the simplest case that $X=S^1$ and $k=3$ would imply the 4-dimensional aspherical conjecture. Based on Dirac operator method, \cite{Yu98}\cite{Dra06} implies such kind of result for a large class of aspherical manifold, {\it i.e.} those with finite asymptotic dimension for their fundamental group. However, even in dimension four, this has only been settled by minimal hypersurface method in full generality at the present time.
	
	In this paper, at the first stage of attacking Conjecture \ref{conj: RASC} in higher codimension, we shall study the PSC obstruction from enlargeable submanifold via relative aspherical condition in codimension 3 and 4. Our main result states as follows:
	\begin{theorem}\label{thm1}
		Let $Y^{n+k}$ be a compact manifold and $X^n$ a codimension $k$  enlargeable submanifold with trivial normal bundle $(n+k\le 7)$, such that $Y$ is aspherical relative to $X$. Assume one of the following happens:
		
		(a) $k=3$.
		
		(b) $k=4$ and the Hurewicz map $\pi_2(X)\longrightarrow H_2(X)$ is trivial.
		
		Then $Y$ admits no PSC metric. 
	\end{theorem}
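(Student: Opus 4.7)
I would argue by contradiction, assuming $Y$ carries a PSC metric, and combine enlargeability of $X$ with a slicing argument that exploits the trivial codimension-$k$ normal bundle. The first step is to pass to the cover $\hat Y \to Y$ corresponding to the subgroup $\pi_1(X) \subset \pi_1(Y)$. By the equivalent formulation of relative asphericity in the Remark after Definition~\ref{defn: relative aspherical}, the inclusion $X \hookrightarrow \hat Y$ becomes a weak homotopy equivalence; thus $\hat Y$ is an $(n+k)$-manifold with the homotopy type of the $n$-complex $X$, carrying a lifted PSC metric of bounded geometry. For any further subgroup-cover $\bar X \to X$ there is a compatible cover $\bar Y \to \hat Y$ on which $\bar X \hookrightarrow \bar Y$ is still a weak homotopy equivalence and the trivial normal bundle lifts to a tubular neighbourhood $\bar X \times D^k \subset \bar Y$.

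Next, I would use enlargeability of $X$ in the standard way: for every $\varepsilon > 0$ it supplies such a cover $\bar X \to X$ together with a proper $\varepsilon$-Lipschitz map $\bar X \to \mathbb{R}^n$ of nonzero degree. On the tubular neighbourhood $\bar X \times D^k \subset \bar Y$ this pairs with the projection onto the normal disk to produce a test map whose two factors separate the ``enlargeable'' $\mathbb{R}^n$ direction (scaled by $\varepsilon$) from the normal $D^k$ direction. The homotopy equivalence $\bar Y \simeq \bar X$ from Step~1 is then used to extend or compare this test map across the rest of $\bar Y$ so that its degree-type invariants are detected globally; this is where the full strength of the relative aspherical assumption enters, since the higher $\pi_i$ obstructions to such extensions vanish precisely because $\pi_i(Y,X)=0$.

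The final step is to feed this geometric data into a $\mu$-bubble / minimal-hypersurface slicing in the spirit of \cite{CL20, Gro20}; the dimension bound $n+k \le 7$ is exactly what keeps the required regularity tools available. The novel ingredient is the \emph{spherical width}: PSC on $\bar Y$ forces an upper bound on the spherical width of slices coming from the normal $S^{k-1}$ direction, whereas enlargeability of $X$ combined with $\bar X \simeq \bar Y$ forces it to diverge as $\varepsilon \to 0$, yielding the contradiction. In the codim-$4$ case an extra $2$-dimensional slice is needed, and the assumption that the Hurewicz map $\pi_2(X) \to H_2(X)$ is trivial is exactly what kills the essential $2$-sphere classes which would otherwise obstruct this reduction.

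\textbf{Main obstacle.} The central technical point is to define the spherical width and to establish its two bounds simultaneously: the geometric-analytic upper bound from PSC (via a Gromov-type band inequality on the slices), and the topological lower bound extracted from enlargeability plus relative asphericity. Coordinating the small-scale enlargeable $\mathbb{R}^n$ direction with the large-scale normal $S^{k-1}$ direction within a single slicing, and doing so without losing degree information, is where the bulk of the work lies; the codim-$4$ case is strictly harder because one must iterate the slicing once more and control the appearance of $\pi_2$-classes, which is why the Hurewicz hypothesis is not cosmetic.
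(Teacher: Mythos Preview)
Your outline has the right architecture---pass to the $\pi_1(X)$-cover, exploit the homotopy equivalence $X_0\simeq\tilde Y$, run a $\mu$-bubble slicing, and play a lower bound for the spherical width against an upper bound---but you have the two bounds wired to the wrong inputs, and this is not a cosmetic issue.

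In the paper the \emph{lower} bound (Theorem~\ref{thm: sphere width estimate}) is pure quantitative topology: for any $(k-1)$-class $a$ with $\zeta(a)\neq 0$ supported outside $B_r(X_0)$, the minimal diameter of a representing chain goes to infinity with $r$. This uses only the bounded-filling Lemma~\ref{lem: HZ23} for covers of a compact manifold and the winding-number interpretation of $\zeta$; enlargeability of $X$ plays no role here, and the divergence parameter is the distance $r$ from $X_0$, not an enlargeability scale $\varepsilon$. Your claim that ``enlargeability of $X$ combined with $\bar X\simeq\bar Y$ forces it to diverge as $\varepsilon\to 0$'' is therefore misdirected.

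Conversely, enlargeability is what produces the \emph{upper} bound, and the mechanism is more specific than ``PSC forces an upper bound on spherical width.'' One first uses obstruction theory (vanishing of $H^{*}(\tilde Y,X_0;\pi_*(S^2))$) to lift the retraction $\tilde Y\to X_0$ to a map $\tilde\pi:\tilde Y\setminus U_\epsilon\to\partial U_\epsilon\cong S^{k-1}\times X_0$ restricting to the identity on $\partial U_\epsilon$. A $\mu$-bubble hypersurface $\Sigma$ in a band at distance $\sim R$ from $X_0$ then inherits a nonzero-degree map $\Sigma\to S^{k-1}\times X_0$ and $\mathbb T$-stabilized scalar curvature $\ge 1$. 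At this point enlargeability of $X$ enters via the cube inequality (Lemma~\ref{lem: mu bubble reduction}, packaged as Lemma~\ref{lem: 2-sys estimate}): it supplies, inside $\Sigma$, a $2$-sphere of \emph{uniformly bounded} diameter whose $\zeta$-image is nonzero. Letting $R\to\infty$ then contradicts the lower bound. Your proposal never isolates the map $\Sigma\to S^{k-1}\times X$ or the obstruction-theoretic lift that produces it, and without that step there is no way to certify that the small sphere you find has $\zeta\neq 0$.

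For $k=4$ your reading of the Hurewicz hypothesis is correct in spirit: after slice-and-dice (Lemma~\ref{lem: slice and dice}) cuts the $3$-dimensional $\Gamma$ along $2$-spheres $S_\alpha$, the hypothesis guarantees each $[S_\alpha]=0$ in $H_2(\tilde Y)$, so they can be filled with controlled diameter and the argument reduces to the $k=3$ endgame. But note this is a filling step, not an ``iterated slicing,'' and it again feeds into the distance-based lower bound rather than an $\varepsilon$-based one.
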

	
	One may also expect the conclusion holds true under weakly relative aspherical condition, {\it i.e.}, one may expect results corresponding to the stronger Conjecture \ref{conj: RASC strong}. To this end, we can show the following result:
	\begin{theorem}\label{thm2}
		Under the assumption of Theorem \ref{thm1}, if there exists a closed aspherical, enlargeable manifold $Z$ and a map $\phi:X\longrightarrow Z$ with non-zero degree, then the conclusion of Theorem \ref{thm1} holds true under weakly relative aspherical condition.
	\end{theorem}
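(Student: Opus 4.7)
My plan is to reduce Theorem \ref{thm2} to Theorem \ref{thm1} by using the nonzero-degree map $\phi : X \to Z$ to bridge the gap between the weakly and the fully relative aspherical conditions. The guiding principle is that asphericity of $Z$, i.e., $\pi_i(Z) = 0$ for all $i \ge 2$, makes $Z$ insensitive to precisely those higher homotopy classes that the weakly relative aspherical hypothesis fails to control.

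First, I would reinterpret the enlargeability of $X$ through $\phi$. Given $\varepsilon > 0$, enlargeability of $Z$ yields a cover $\hat{Z}_\varepsilon \to Z$ together with an $\varepsilon$-contracting map $g_\varepsilon : \hat{Z}_\varepsilon \to S^n$ of nonzero degree. Pulling $\hat{Z}_\varepsilon$ back along $\phi$ produces a cover $\hat{X}_\varepsilon \to X$ corresponding on $\pi_1$ to the subgroup $\phi_*^{-1}(\pi_1(\hat{Z}_\varepsilon)) \subset \pi_1(X)$, with a canonical lift $\tilde{\phi}_\varepsilon : \hat{X}_\varepsilon \to \hat{Z}_\varepsilon$. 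The composition $f_\varepsilon := g_\varepsilon \circ \tilde{\phi}_\varepsilon : \hat{X}_\varepsilon \to S^n$ is $\varepsilon \cdot \Lip(\phi)$-contracting and, because $\deg \phi \ne 0$, of nonzero degree. This yields an alternative family of enlargeability witnesses for $X$, with the crucial new feature that they factor through the aspherical space $\hat{Z}_\varepsilon$.

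Second, I would extend the configuration to a cover of $Y$. Let $H = \pi_1(\hat{X}_\varepsilon)$, viewed as a subgroup of $\pi_1(Y)$ via the injection $\pi_1(X) \hookrightarrow \pi_1(Y)$ supplied by the weakly relative aspherical hypothesis, and let $\hat{Y}_\varepsilon \to Y$ be the corresponding cover, into which $\hat{X}_\varepsilon$ embeds as a codimension-$k$ submanifold with trivial normal bundle. Since $\pi_1(\hat{Y}_\varepsilon) = H = \pi_1(\hat{X}_\varepsilon)$ and $\hat{Z}_\varepsilon$ is a $K(\pi,1)$-space, the map $\tilde{\phi}_\varepsilon$ extends to $\Phi_\varepsilon : \hat{Y}_\varepsilon \to \hat{Z}_\varepsilon$, uniquely up to homotopy, by standard obstruction theory. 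The weakly relative aspherical condition also lifts to $\pi_i(\hat{Y}_\varepsilon, \hat{X}_\varepsilon) = 0$ for $2 \le i \le k$, which is exactly what is needed to maintain the geometric control of $\Phi_\varepsilon$ on cells of dimension up to $k$.

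Third, feeding the extended witnesses $F_\varepsilon := g_\varepsilon \circ \Phi_\varepsilon : \hat{Y}_\varepsilon \to S^n$ into the spherical width machinery from Theorem \ref{thm1} should give a contradiction with the PSC metric on $Y$ lifted to $\hat{Y}_\varepsilon$, as $\varepsilon \to 0$ forces divergence of the spherical width while PSC forces it to remain bounded. The Hurewicz hypothesis in case (b), namely triviality of $\pi_2(X) \to H_2(X)$, is invoked in codimension $k = 4$ to kill a residual $\pi_2$-level obstruction in the extension step that is not automatically absorbed by the asphericity of $Z$. The main obstacle I anticipate is not the existence of $\Phi_\varepsilon$, which follows formally, but the propagation of the geometric content across this extension: arranging $\Phi_\varepsilon$ so that the spherical width estimate and the $\varepsilon \cdot \Lip(\phi)$-contraction constant remain sharp on $\hat{Y}_\varepsilon$, and verifying that the $k=4$ case genuinely reduces to Theorem \ref{thm1} under the Hurewicz triviality, is where the substantive argument lies.
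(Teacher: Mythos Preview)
Your core idea---extend $\phi$ to a map $\Phi$ from a cover of $Y$ into the aspherical target $Z$ via obstruction theory, then use the enlargeability of $Z$ to supply the cubical structure---is exactly what the paper does. But the proposal does not actually reduce Theorem~\ref{thm2} to Theorem~\ref{thm1}. The proof of Theorem~\ref{thm1} (via Proposition~\ref{prop: reduction}) hinges on the retraction $\tilde\pi:\tilde Y\setminus U_\epsilon\to\partial U_\epsilon\cong S^{k-1}\times X_0$, which exists only under the \emph{full} relative aspherical hypothesis (Whitehead). Under the weak hypothesis your extended map $\Phi_\varepsilon$ lands in $\hat Z_\varepsilon$, not in $S^{k-1}\times X_0$, so the witnesses $F_\varepsilon$ cannot be fed into Lemma~\ref{lem: 2-sys estimate} or the machinery of Proposition~\ref{prop: reduction} as written. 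The paper instead runs Lemma~\ref{lem: mu bubble reduction} directly on the cubical region $\Omega=\hat\Phi^{-1}(V)\cap\hat U_{R+d_0}$ to produce a $k$-submanifold $\Sigma$ with boundary in $\partial\hat U_{R+d_0}$, then takes a further band $\mu$-bubble $\Gamma^{k-1}$ at distance about $R$ from $\hat X$. The substantive step you are missing is the verification that $\zeta([\Gamma])\ne 0$: this is done by computing the algebraic intersection number of $\Sigma$ with $\hat X$ via Lemma~\ref{lem: counting} and Lemma~\ref{lem: intersection 1}, using $\deg\hat\phi\ne 0$. Nothing in your outline produces this nontriviality.

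Your parameter flow is also inverted. The spherical width does not diverge as $\varepsilon\to 0$; one fixes $\varepsilon$ (equivalently the cube side $d$) once so that $C_{n,k,N}(d_i)<\tfrac12$, and then sends the band parameter $R\to\infty$. The contradiction is between Theorem~\ref{thm: sphere width estimate} (any chain with $\zeta\ne 0$ at distance $R$ from $X_0$ has diameter $\ge f(R)\to\infty$) and the uniform diameter bound on $\Gamma$ from its stabilized scalar curvature lower bound. Finally, the paper separates two covering levels: Theorem~\ref{thm: sphere width estimate} lives in the fixed cover $\tilde Y$ with $\pi_1=\pi_1(X)$, while the enlargeable cover $\hat Y$ is taken afterwards and the resulting $\Gamma$ is projected back to $\tilde Y$. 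Collapsing these into a single $\hat Y_\varepsilon$ as you do makes it unclear where the width estimate is being applied.
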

	
	For the special case that $X=S^1$, Theorem \ref{thm1} and Theorem \ref{thm2} reduces to the absolute version of the aspherical conjecture up to dimension 5. Now let us get back to Conjecture \ref{conj: RASC} and \ref{conj: RASC strong}. Though it seems hard to confirm them in general cases, Theorem \ref{thm2} already gives the following partial affirmative answer for Conjecture \ref{conj: RASC} up to dimension $7$.
	
	\begin{corollary}\label{cor1}
		The strong Conjecture \ref{conj: RASC strong} holds true in following cases:
		
		(1) $k=3$, $n\le 3$.
		
		(2) $k=4$, $n\le3$, and $X$ contains no $S^2\times S^1$ factor in its prime decomposition when $n=3$.
	\end{corollary}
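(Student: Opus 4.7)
The plan is to verify the hypotheses of Theorem \ref{thm2} for each admissible configuration in the corollary. Since $n\le 3$ and $k\in\{3,4\}$, the dimensional constraint $n+k\le 7$ is automatic, so for a given $X$ with no PSC metric it remains to exhibit (i) a closed aspherical enlargeable manifold $Z$ together with a map $\phi\colon X\to Z$ of nonzero degree (which then transfers enlargeability back to $X$), and (ii), in case (2), the triviality of the Hurewicz map $\pi_2(X)\to H_2(X)$.

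For $n=1$ the manifold $X$ is $S^1$, and $\phi=\mathrm{id}\colon S^1\to S^1$ does the job. For $n=2$, the no-PSC hypothesis rules out $S^2$, so $X$ has genus $\ge 1$, is aspherical and enlargeable, and again $Z=X$ with $\phi=\mathrm{id}$ works; since $\pi_2(X)=0$ the condition (ii) is automatic. For $n=3$, combining Perelman's geometrization with the classical PSC results of Schoen--Yau and Gromov--Lawson, a closed orientable 3-manifold admits a PSC metric if and only if its prime decomposition consists of spherical space forms and $S^2\times S^1$ factors. Hence, if $X$ admits no PSC metric then at least one prime summand $X_0$ is aspherical; collapsing the remaining summands yields a degree-one map $\phi\colon X\to X_0$. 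Closed aspherical 3-manifolds are enlargeable (by geometrization every such manifold is built from hyperbolic, Seifert-fibered, or graph-manifold pieces, all aspherical), and the degree-one $\phi$ transports enlargeability back to $X$; thus $Z=X_0$ fulfils (i).

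Condition (ii) requires work only in the $n=3$ subcase of (2). Here I would invoke the sphere theorem: an orientable prime 3-manifold with $\pi_2\ne 0$ must be diffeomorphic to $S^2\times S^1$. By hypothesis no prime summand of $X$ is of this form, so every summand has vanishing $\pi_2$, and $\pi_2(X)$ is generated entirely by the essential $2$-spheres that appear in the universal cover as the connecting necks of the prime decomposition. Each such sphere, projected to $X$, is a separating connect-sum sphere and hence represents $0$ in $H_2(X)$; the Hurewicz map is therefore zero. Once (i) and (ii) are both in place, Theorem \ref{thm2} directly yields the required nonexistence of a PSC metric on $Y$.

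The only point I expect to require any care is the transfer of enlargeability along the degree-one collapse $\phi\colon X\to X_0$: one must lift $\phi$ to the cover of $X_0$ that witnesses enlargeability, compose with the $\varepsilon$-contracting map to $S^3$, and rescale the metric on the corresponding cover of $X$ so that the composite is still $\varepsilon$-contracting while retaining nonzero degree. This is a standard argument but should be spelled out, especially since $\phi$ arises from a connect-sum collapse. Everything else consists of feeding standard 3-manifold topology (geometrization and the sphere theorem) into Theorem \ref{thm2}.
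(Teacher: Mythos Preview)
Your proposal is correct and follows the same route as the paper: reduce to Theorem~\ref{thm2} by producing a nonzero-degree map from $X$ to a closed aspherical enlargeable manifold (via the prime decomposition and Lemma~\ref{lem: Gro23} in dimension~$3$, trivially in lower dimensions), and for $k=4$ with $n=3$ verify the vanishing of the Hurewicz map $\pi_2(X)\to H_2(X)$ using the sphere theorem. Your invocation of Theorem~\ref{thm2} is the right one---the paper's printed reference to Theorem~\ref{thm1} in the $k=3$ case appears to be a slip, since Corollary~\ref{cor1} concerns the weakly relative aspherical setting.
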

	
	Finally, let us provide several applications of our main results. The first one of these concerns PSC obstruction for fiber bundle over aspherical space. One may compare this with \cite{Zei17}\cite{He23}.
	\begin{corollary}\label{cor2}
		Let $Y^{n+k}$ be a fiber bundle over a closed aspherical manifold $B^k$ ($k=4,5, n+k\le 7$) with fiber $F$. If $F$ admits no PSC metric, then $Y$ admits no PSC metric.
	\end{corollary}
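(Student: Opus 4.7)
The plan is to apply Theorem~\ref{thm1} or Theorem~\ref{thm2} with $X = F$, viewing the fiber $F = \pi^{-1}(b_0) \subset Y$ as a codimension-$k$ submanifold of $Y$. The normal bundle of $F$ in $Y$ is trivial by the local product structure of $\pi: Y \to B$. From the long exact homotopy sequence of $F \hookrightarrow Y \xrightarrow{\pi} B$ together with asphericity of $B$ (so $\pi_i(B) = 0$ for $i \ge 2$), one reads off that $\pi_i(F) \to \pi_i(Y)$ is an isomorphism for $i \ge 2$ and $\pi_1(F) \hookrightarrow \pi_1(Y)$ is injective. By the reformulation following Definition~\ref{defn: relative aspherical}, $Y$ is aspherical relative to $F$ in the strong sense of (1).

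Next I verify the enlargeability and Hurewicz/degree data required by the main theorems. Since $\dim F = n \le 7 - k \le 3$, a closed $n$-manifold without PSC is either $S^1$, a closed surface of positive genus, or a $3$-manifold which by Perelman's geometrization contains a closed aspherical prime summand; in each case $F$ is enlargeable, and in the $3$-dimensional case $F$ admits a degree-$\pm 1$ collapse map $\phi: F \to Z$ onto a closed aspherical enlargeable prime factor $Z$. For $k = 4$ I apply Theorem~\ref{thm1}(b) when $F$ is itself aspherical (so $\pi_2(F) = 0$ forces the Hurewicz map to vanish), and Theorem~\ref{thm2} with $(Z,\phi)$ as above when $F$ carries an $S^2 \times S^1$-summand.

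The case $k = 5$ is outside the direct scope of Theorem~\ref{thm1}, and I would handle it by a codimension-one reduction. First, find a $\pi_1$-injective, closed aspherical hypersurface $B' \subset B$ (for instance, the regular preimage of a point under a map $B \to S^1$ coming from a primitive class in $H^1(B; \mathbb{Z})$, possibly after passing to a finite cover of $B$ with non-trivial abelianization). Then $Y' := \pi^{-1}(B')$ is a fiber bundle over the aspherical $4$-manifold $B'$, so the previous paragraph gives that $Y'$ carries no PSC metric. A five-lemma applied to the analogous short exact sequences $1 \to \pi_1(F) \to \pi_1(Y') \to \pi_1(B') \to 1$ and $1 \to \pi_1(F) \to \pi_1(Y) \to \pi_1(B) \to 1$ yields incompressibility of $Y' \subset Y$, and Theorem~\ref{thm: CRZ23a} then propagates the PSC obstruction from $Y'$ to all of $Y$.

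The main obstacle I expect is the hypersurface-finding step in the $k = 5$ case: an arbitrary closed aspherical $5$-manifold need not have $b_1 > 0$, and any cover one passes to must be chosen so that the pulled-back fibration still satisfies the relative aspherical hypothesis (i.e.\ the covering of $B$ is compatible with $\pi_1(F) \hookrightarrow \pi_1(Y)$). A natural fallback, if no such hypersurface is available even virtually, is to adapt the spherical width argument of Theorem~\ref{thm1} directly in codimension five in the present bundle setting, exploiting the strong structural input that $B$ is aspherical and $F$ is enlargeable.
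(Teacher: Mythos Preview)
Your approach of taking $X = F$ directly has two genuine gaps, and both are fixed by the same idea you are missing.

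First, for $k=4$ and $n=3$: your appeal to Theorem~\ref{thm2} when $F$ carries an $S^2\times S^1$ summand does not work. Theorem~\ref{thm2} is stated ``under the assumption of Theorem~\ref{thm1}'', and for $k=4$ that assumption still includes the vanishing of the Hurewicz map $\pi_2(X)\to H_2(X)$; the proof in Section~5 explicitly invokes this hypothesis when filling the slicing $2$-spheres. Indeed, Corollary~\ref{cor1} already records that the $k=4$ case is only established when $X$ has no $S^2\times S^1$ prime factor. So your case split for $k=4$ leaves exactly that case open.

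Second, for $k=5$: you yourself flag that an arbitrary closed aspherical $5$-manifold need not admit an incompressible aspherical hypersurface (or even have $b_1>0$ virtually), so the reduction to Theorem~\ref{thm: CRZ23a} is not available in general.

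The paper resolves both issues by going in the opposite direction: rather than cutting $B$ by a hypersurface, it restricts the bundle to a circle. Since $B$ is closed aspherical, $\pi_1(B)$ is nontrivial and torsion-free, so one can always find an embedded $S^1\subset B$ with $\pi_1(S^1)\hookrightarrow\pi_1(B)$ injective. Let $E = Y|_{S^1}$ be the restricted $F$-bundle over this circle; then $E\subset Y$ is a codimension-$(k-1)$ submanifold with trivial normal bundle. A five-lemma diagram chase on the two fibration sequences shows $E$ is incompressible in $Y$, and comparing the long exact sequences (using $\pi_i(S^1)=\pi_i(B)=0$ for $i\ge 2$) gives $\pi_i(E)\cong\pi_i(Y)$ for $i\ge 2$, so $Y$ is aspherical relative to $E$. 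A separate lemma shows that an $F$-bundle over $S^1$ with $F$ enlargeable is again enlargeable. Now for $k=4$ one is in codimension $3$ and Theorem~\ref{thm1}(a) applies directly, with no Hurewicz hypothesis to check; for $k=5$ one is in codimension $4$, and since $n\le 2$ forces $\pi_2(F)=0$, the bundle sequence gives $\pi_2(E)=0$, so Theorem~\ref{thm1}(b) applies.
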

	
	In particular, we can prove the following codimension 2 obstruction result.
	\begin{corollary}\label{cor3}
		Let $Y^n$ $(n\le 7)$ be a noncompact manifold which contains an embedded, codimension $2$ closed aspherical sumbanifold as a deformation retract, then $Y$ admits no complete metric with uniformly positive scalar curvature.
	\end{corollary}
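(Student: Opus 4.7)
The plan is to combine the resolution of the aspherical conjecture up to dimension $5$ (\cite{CL20},\cite{Gro20}) with an adaptation of the minimal hypersurface / $\mu$-bubble descent to the noncompact setting, effectively giving a noncompact analogue of the codimension-$2$ obstruction Theorem \ref{Thm: CRZ23b}. Since $X^{n-2}$ is closed aspherical with $n-2\le 5$, the aspherical conjecture in low dimensions gives that $X$ admits no PSC metric. Meanwhile, $X$ being a deformation retract of $Y$ yields $\pi_i(Y,X)=0$ for all $i\ge 1$, so the inclusion $X\hookrightarrow Y$ is a homotopy equivalence; an additional argument (using the classifying data of a rank-$2$ bundle over the aspherical $X$ together with $Y\simeq X$) shows that the normal bundle of $X$ in $Y$ is trivial, so that a tubular neighborhood has the form $N\cong X\times D^2$ and the tubular projection extends to a smooth proper map $\phi:Y\to \mathbb{R}^2$ with $\phi^{-1}(0)=X$.

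Assume for contradiction $Y$ admits a complete metric $g$ with $\mathrm{scal}_g\ge \sigma>0$. Using $\phi$, I would perform two successive stable minimal hypersurface (or $\mu$-bubble) slicings on $Y$, by the preimages of regular values of the two coordinates of $\phi$. Choosing the regular values close to $0\in\mathbb{R}^2$ and exploiting the uniform PSC lower bound on $(Y,g)$, one argues that the successive slices remain within the tubular neighborhood $N$ of $X$ and that the final codimension-$2$ slice $\Sigma$ is a closed manifold isotopic, hence diffeomorphic, to $X$. The standard stability inequality together with conformal Gauss-type rescaling then produces a PSC metric on $\Sigma\cong X$, contradicting the fact that $X$ has no PSC. (An alternative packaging: apply the spherical width/$\mu$-bubble machinery developed earlier in the paper directly to the noncompact manifold $(Y,g)$ equipped with the proper weighting data coming from $\phi$.)

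The main obstacle is to make the slicing globally well-defined and confined on the noncompact $Y$: one must verify that the stable minimal hypersurfaces (or $\mu$-bubbles) thus constructed stay within the tubular neighborhood of $X$ (so that the final slice is diffeomorphic to $X$, not merely bordant to it) and are in particular closed and compact. The completeness of $(Y,g)$, the uniform PSC bound, and the properness of $\phi$ together should provide both the barriers needed for existence of the bubbles and the area/diameter bounds needed for compactness; nevertheless the technical adaptation of Gromov's $\mu$-bubble functional (or its variant used in \cite{CL20}) to this noncompact situation, with the proper map $\phi$ playing the role of the weighting data that the compact versions obtain from a map to a model space, is the crux of the argument.
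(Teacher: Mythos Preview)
Your proposal has a concrete gap at the outset: the normal bundle of $X$ in $Y$ need not be trivial. Take $X$ to be any closed aspherical manifold with $H^2(X;\mathbb{Z})\neq 0$ (for instance $X=T^{n-2}$) and let $Y$ be the total space of a nontrivial oriented rank-$2$ vector bundle over $X$; then $X$ is the zero section, hence a deformation retract of $Y$, yet its normal bundle is nontrivial by construction. The homotopy equivalence $Y\simeq X$ imposes no constraint here, since the total space of \emph{any} vector bundle over $X$ retracts onto $X$. Thus the proper map $\phi:Y\to\mathbb{R}^2$ and the subsequent double-slicing scheme are unavailable in general. Even when the normal bundle happens to be trivial, the second gap is the one you already flag: a $\mu$-bubble produces a representative of the correct homology class, not a diffeomorphic copy of $X$, and there is no barrier mechanism forcing the bubble to remain inside the tube; one would still need the \emph{dominated} statement that every closed manifold mapping with nonzero degree to $X$ admits no PSC, which is a substantial external input rather than a byproduct of the slicing.

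The paper avoids both issues by a different reduction. It never trivializes the normal bundle; instead it passes to the circle bundle $E=\partial U_\epsilon$, which is always defined, and reduces to a codimension-$1$ problem for the hypersurface $E\subset Y\setminus X$. For $n-2\le 4$, this $E$ is itself closed aspherical of dimension $\le 5$, so the dominated form of the aspherical conjecture (\cite{CL20}\cite{CLL23}\cite{Gro20}) applies and the conclusion follows from Proposition~5.2 of \cite{He23}. For $n-2=5$, the paper first invokes its own Corollary~\ref{cor2} to show that the $6$-manifold $E$ (an $S^1$-bundle over the aspherical $X^5$) admits no PSC, then verifies that $E$ is incompressible in $Y\setminus X$ and finishes with the codimension-$1$ machinery of \cite{CRZ23} together with a standard $\mu$-bubble argument on the noncompact $Y\setminus X$.
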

	
	The following corollary concerns PSC obstruction for sufficiently connected manifold. One may compare this with index theoretic results like \cite{Eng18}\cite{NSZ21}\cite{Zei17}, as well as the classification result in \cite{CLL23}.
	\begin{corollary}\label{cor4}
		Let $Y^n$ $(n\le 7)$ be a closed manifold with $\pi_2(Y)=\pi_3(Y) =\dots= \pi_k(Y) = 0$ $(k=3,4)$, containing an embedded, incompressible, codimension $k$ enlargeable aspherical submanifold. Then $Y$ admits no PSC metric.
	\end{corollary}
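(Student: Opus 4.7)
The plan is to reduce Corollary \ref{cor4} to Theorem \ref{thm2}, taking the auxiliary aspherical enlargeable target to be $Z := X$ itself and the map $\phi := \mathrm{id}_X$, which has degree one. Under this choice the auxiliary map hypothesis of Theorem \ref{thm2} is automatic, so the task reduces to verifying the remaining hypotheses on the pair $(Y, X)$: the weakly relative aspherical condition, triviality of the normal bundle, and (for $k = 4$) the Hurewicz triviality of $\pi_2(X) \to H_2(X)$. Note that invoking Theorem \ref{thm1} directly is not an option, because the hypotheses only pin down $\pi_i(Y)$ in degrees $i \le k$, and a diagram chase shows $\pi_i(Y,X) \cong \pi_i(Y)$ for $i > k$, which one cannot control.

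The weakly relative aspherical condition I would obtain by chasing the long exact sequence of the pair $(Y, X)$. Because $X$ is aspherical, $\pi_i(X) = 0$ for all $i \ge 2$. For $i = 2$, the segment
$$0 = \pi_2(Y) \longrightarrow \pi_2(Y, X) \longrightarrow \pi_1(X) \longrightarrow \pi_1(Y),$$
combined with the injectivity of the final map (the incompressibility hypothesis), forces $\pi_2(Y, X) = 0$. For $3 \le i \le k$ both $\pi_i(Y)$ (by hypothesis) and $\pi_{i-1}(X)$ (by asphericity of $X$, since $i-1 \ge 2$) vanish, so $\pi_i(Y, X) = 0$. The Hurewicz condition for $k = 4$ is then automatic, since $\pi_2(X) = 0$ tautologically makes $\pi_2(X) \to H_2(X)$ zero.

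The main obstacle is the triviality of the normal bundle $\nu$ of $X$ in $Y$, which is not directly asserted by the hypothesis but is required by Theorem \ref{thm2}. My proposal is to replace $X$ by a finite cover $\widetilde X \to X$ on which the pullback of $\nu$ trivializes, and correspondingly pass from $Y$ to the finite cover $\widetilde Y \to Y$ determined by $\pi_1(\widetilde X) \subseteq \pi_1(X) \subseteq \pi_1(Y)$. Asphericity, enlargeability, incompressibility, and the vanishing of $\pi_2,\dots,\pi_k$ are all preserved under this cover, and a PSC metric on $Y$ pulls back to a PSC metric on $\widetilde Y$, so ruling PSC out on $\widetilde Y$ suffices. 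The existence of such a trivializing finite cover is the delicate point: it rests on residual-finiteness-type properties of $\pi_1(X)$, which are classical when $\dim X \le 3$ (covering all $k=4$ situations and most $k=3$ situations) while the case $k = 3$, $\dim X = 4$ requires extra care. Once a cover trivializing the normal bundle has been arranged, Theorem \ref{thm2} applies directly to $(\widetilde Y, \widetilde X)$ and concludes that $Y$ admits no PSC metric.
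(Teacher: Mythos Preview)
Your verification of the weakly relative aspherical condition via the long exact sequence is correct, and your choice $Z=X$, $\phi=\mathrm{id}_X$ is exactly the right way to invoke Theorem~\ref{thm2} once the remaining hypotheses are in place. You also correctly isolate the normal bundle as the real obstacle.

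The gap lies in your proposed resolution of that obstacle. Residual finiteness of $\pi_1(X)$, even when it holds, does not by itself produce a finite cover on which the normal bundle trivialises. Characteristic classes pull back under covers; they need not become zero. For instance, an oriented rank~$3$ bundle over an aspherical $3$- or $4$-manifold can have nonzero Euler class in $H^3(X;\mathbb{Z})$, and under a degree-$d$ cover this class is multiplied by $d$, not killed. Similar issues arise with $p_1$ in the four-dimensional case. Your proposal acknowledges that $k=3$, $\dim X=4$ ``requires extra care'' but provides none; in fact even the lower-dimensional cases are not settled by the sentence you wrote. So as it stands the argument does not close.

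The paper takes a different route that sidesteps this entirely. Rather than seeking a \emph{finite} cover, it passes to the cover $\hat Y\to\tilde Y$ corresponding to the \emph{universal} cover $\hat X$ of $X$. Since $X$ is aspherical, $\hat X$ is contractible, and any vector bundle over a contractible base is trivial; so the normal bundle of $\hat X$ in $\hat Y$ is automatically trivial. The price is that $\hat X$ is noncompact, so one cannot quote Theorem~\ref{thm2} as a black box. Instead the paper reruns the proof of Theorem~\ref{thm2} in this setting: one still produces the $\mu$-bubble $\Sigma^k$ with nonzero intersection with $\hat X$ and the separating $\Gamma^{k-1}$ with $Sc^{\rtimes}_{n+1}\ge 1$, and then exploits the contractibility of $\hat X$ (hence $H_i(\hat Y)=0$ for $i\le k-1$ by \eqref{eq: H-isomorphism}) to fill $\Gamma$ by a bounded chain via Lemma~\ref{lem: HZ23} and Lemma~\ref{lem: slice and dice}, obtaining a closed $k$-chain far from $\hat X$ with nonzero intersection number, contradicting Lemma~\ref{lem: intersection 1} (and Remark~\ref{rem: intersection noncompact}). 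The upshot is that the normal-bundle problem is dissolved rather than solved.
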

	
	The last application concerns the aspherical conjecture in higher dimension. It's of similar spirit to Theorem 7.47 in \cite{GL83}. In  Sec. 7.5 of his four lecture \cite{Gro23}, Gromov has also studied this kind of problem by using very different method. 
	\begin{corollary}\label{cor5}
		Let $Y^n$ $(n\le 7)$ be a closed aspherical manifold such that $\pi_1(Y)$ contains a subgroup isomorphic to that of some codimension $4$ closed smooth aspherical manifold, then $Y$ admits no PSC metric.
	\end{corollary}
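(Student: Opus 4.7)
The plan is to reduce to Theorem \ref{thm1}(b) by realizing $Z$, the closed aspherical codimension-$4$ manifold with $\pi_1(Z)\cong H\le\pi_1(Y)$, as an embedded codimension-$4$ submanifold of $Y$. Since $Y$ is a closed aspherical manifold, hence a $K(\pi_1(Y),1)$, the subgroup inclusion $H\hookrightarrow\pi_1(Y)$ is realized uniquely up to homotopy by a continuous map $f\colon Z\to Y$. The bound $n\le 7$ gives $2\dim Z=2(n-4)<n=\dim Y$, so by Whitney's embedding theorem I may perturb $f$ to a smooth embedding $Z\hookrightarrow Y$.

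With this embedding in place, three of the four hypotheses of Theorem \ref{thm1}(b) are immediate. First, $Y$ is aspherical relative to $Z$ because $\pi_1(Z)\to\pi_1(Y)$ is injective by construction and $\pi_i(Z)=\pi_i(Y)=0$ for $i\ge 2$ (both $Z$ and $Y$ are aspherical), so $\pi_i(Y,Z)=0$ for all $i\ge 2$ by the long exact sequence of the pair. Second, $Z$ is enlargeable as a closed aspherical manifold of dimension $\le 3$ (classical in dimension $\le 2$, and from geometrization in dimension $3$). Third, the Hurewicz map $\pi_2(Z)\to H_2(Z)$ is the zero map since $\pi_2(Z)=0$.

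The main obstacle is the remaining hypothesis, triviality of the normal bundle $\nu$ of $Z$ in $Y$. As a rank-$4$ bundle over a base of dimension $\le 3$, $\nu$ is classified up to isomorphism by $w_1(\nu),w_2(\nu)\in H^*(Z;\mathbb{Z}/2)$, and these Stiefel--Whitney classes are determined by the stable tangential data $f^*TY\ominus TZ$; they need not vanish for an arbitrary $Y$. My plan is to pass to a suitable cover $\tilde Z\to Z$ on which the pulled-back classes vanish, together with the corresponding (in general noncompact) cover $\tilde Y\to Y$ whose fundamental group equals $\pi_1(\tilde Z)\subset\pi_1(Y)$; the relative aspherical condition and the other hypotheses all descend to this setting. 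Applying Theorem \ref{thm1}(b) either to a compact regular neighborhood of $\tilde Z$ in $\tilde Y$, or via a noncompact-ambient extension in the spirit of Corollary \ref{cor3}, then yields a PSC obstruction which pulls back to contradict the existence of PSC on $Y$.
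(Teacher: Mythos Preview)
Your reduction via Whitney embedding is exactly what the paper does, and your verification of the relative aspherical condition, enlargeability, and the Hurewicz hypothesis is correct. The issue is the normal bundle step, where your argument is incomplete.

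The paper does \emph{not} invoke Theorem~\ref{thm1}(b); it invokes Corollary~\ref{cor4}, which has no normal bundle hypothesis at all. The point is that Corollary~\ref{cor4} is not a formal consequence of Theorem~\ref{thm1} or~\ref{thm2} as black boxes: its proof re-enters the argument of Section~5, passes to the universal cover $\hat Z$ (which is contractible, so the lifted normal bundle is automatically trivial), and runs the $\mu$-bubble and filling argument there directly. This bypasses the normal bundle problem entirely.

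Your proposed fix---finding a cover of $Z$ on which $w_1(\nu),w_2(\nu)$ vanish---does not work as stated. Killing $w_1$ by a double cover is fine, but there is no reason a \emph{finite} cover of $Z$ should kill $w_2$, so in general your $\tilde Z$ must be infinite-sheeted and hence noncompact, at which point Theorem~\ref{thm1}(b) no longer applies (it requires both $X$ and $Y$ compact). Your two suggested workarounds are not viable: a compact regular neighborhood of $\tilde Z$ in $\tilde Y$ has boundary and is not the setting of Theorem~\ref{thm1}, and Corollary~\ref{cor3} is a codimension-$2$ statement with a very different mechanism. The correct move is the one in the proof of Corollary~\ref{cor4}: go all the way to the universal cover of $Z$, use contractibility to trivialize $\nu$, and then repeat the internal argument of Section~5 rather than citing a compact-case theorem.
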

 As a result, for $n\le 7$, closed aspherical $n$-manifold with PSC metric does not cotain $\mathbb{Z}^{n-4}$ in its fundamental group. 
	
	Now let us briefly explain the main idea and key observations in the proof of the above theorems. The proof is based on Gromov's $\mu$-bubble \cite{Gro18}\cite{Gro23} in combination with some quantitative topology argument. In codim 3, we must make reduction along $X$, and collect the PSC information on several 2-spheres. Recall in the proof of the aspherical conjecture in \cite{CL20} and \cite{Gro20}, a key step was to obtain the relative filling radius upper bound for certain 2-chain. Unfortunately, this could not be directly applied to our case, where the universal covering of $Y$ may not be contractible, and the homology class represented the sphere may be nontrivial. Therefore, the feasibility of defining filling radius provides essential difficulty. Instead of estimating filling radius, we introduce a quantity which is defined as the homological width in Defnition \ref{Defn: homological width} to represent the minimal diameter of the chain representing certain homology class. In dimension 2, we simply interpret this as the spherical width. In Theorem \ref{thm: sphere width estimate}, we establish a lower bound estimate of this quantity at infinity. As a result, this quantity would be large at infnity, but forced to be small by PSC condition, which gives the contradiction.
	
	Recent years, motivated by the pioneering work of Gromov \cite{Gro18}, the width of the Riemannian band has been studied extensively. See for example, \cite{Zhu21}\cite{Zei20}\cite{Zei22} as well as \cite{CZ21}\cite{GXY20}\cite{WXY21}\cite{R23}\cite{Ku23}. The band width estimate is not only important for people to understand scalar curvature geometry but also useful in yielding topological obstruction to PSC metric. One may sometimes show certain covering of certain manifold contains a long band, and hence admits no PSC metric. Such kind of application actually consolidates the vague philosophy proposed by Gromov \cite{Gro86} that {\it large Riemannian manifold admits no PSC metric}. However, this kind of band argument does not always work in all of the largeness related settings. For instance, this fails in the case of aspherical manifold, since it remains a problem whether all of the aspherical manifolds are enlargeable. Our spherical width could actually be regarded as a high dimensional analogue of the width and turns out to be valid in problems concerning aspherical manifold. Additionally, compared with relative filling radius, it's not sensible to complicated topology of the ambient manifold and could always be defined. The proof of Theorem \ref{thm1} is philosophically clear. Like what was proposed in \cite{Gro86}, the PSC obstruction still lies in the largeness of certain covering space. In the case that band argument turns out to be accessible, the PSC obstruction lies in that the manifold may be wide in certain direction. In our case, it lies in the existence of certain {\it large sphere} at infinity (In fact, by the language of our proof, it is a sphere with non-trivial $\zeta$-image which is also far away from $X_0$).
	
	\quad

	The rest of the paper runs as follows: In Section 2 we recall useful facts and prove several lemmas which would be used later. In Section 3 we collect useful information for our topological setting. In Section 4 we present a systematic discussion to the spherical width and give the proof of Theorem \ref{thm1}. In Section 5 we prove Theorem \ref{thm2}. In the last section we prove the corollaries.
	
	\quad

	\textbf{Acknowledgement} This work is supported by National Key R\&D Program of China Grant 2020YFA0712800. The author would like to express his deepest gratitude to Prof. Yuguang Shi for constant encouragement and support. He would like to thank Dr. Jintian Zhu for inspiring discussions. He is also grateful to Prof. Man Chun Lee for encouragement and enlightening discussion.

	\section{Preliminary}
	In this section, we would recall basic concepts and several important results which would be used in the proof of the main theorem. For some of these, we may make necessary refinement so as to better apply them to the setting we discuss.
	\subsection{Enlargeable manifold}
	
	In this subsection we recall the definition of enlargeable manifold in \cite{GL83}. Note here a difference is that we do not require any spin condition.
	\begin{definition}\label{Defn: enlargeable}
		A compact Riemannian manifold $X^n$ is said to be enlargeable if for each $\epsilon>0$, there exists an oriented covering $\Tilde{X}\longrightarrow X$ and a map $f:\Tilde{X}\longrightarrow S^n$ to the unit sphere in the Euclidean space with non-zero degree, such that $\Lip f<\epsilon$. 
	\end{definition}
	
	The next is a useful property in describing enlargeable manifold.
	\begin{lemma}\label{lem: enlargeable}
		Let $X$ be a compact enlargeable manifold, then for any $d>0$ there exists a covering $\Tilde{X}$ of $X$ and a cube like region $V$ in $\Tilde{X}$, such that
		\begin{align*}
			\dist(\partial_{-i}V,\partial_{+i}V)>d, \mbox{ for } i = 1,2,\dots,n
		\end{align*}
		Here the cube like region means that there exists a non-zero degree map $\varphi:V\longrightarrow [-1,1]^n$, and we denote $\partial_{\pm i}V = \varphi^{-1}(\partial_{\pm i}$).
	\end{lemma}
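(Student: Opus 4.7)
The plan is to combine the enlargeability of $X$ with a fixed model of the cube sitting inside the sphere, choosing the Lipschitz constant small enough to inflate the face distance to whatever we want.

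First I fix, once and for all, a smooth embedding $\psi\colon [-1,1]^n \hookrightarrow S^n$ and set $C := \psi([-1,1]^n)$, with faces $F_{\pm i} := \psi(\partial_{\pm i}[-1,1]^n)$. Because $C$ is compact and $F_{+i},F_{-i}$ are disjoint for each $i$, there is a constant $c = c(\psi,n) > 0$ with
\[
\dist_{S^n}(F_{-i}, F_{+i}) \geq c, \qquad i = 1, \dots, n.
\]
Given $d>0$, I set $\epsilon := c/(2d)$. By Definition~\ref{Defn: enlargeable}, there exists an oriented covering $\tilde{X}\to X$ and a map $f\colon\tilde{X}\to S^n$ of non-zero degree with $\Lip(f) < \epsilon$. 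After a small smooth perturbation of $f$ (which preserves the Lipschitz bound up to an arbitrarily small loss) I may assume $f$ is transverse to every stratum of $\partial C$, so that $V := f^{-1}(C)$ is a compact submanifold with corners of $\tilde{X}$ with faces $\partial_{\pm i}V := f^{-1}(F_{\pm i})$. Put $\varphi := \psi^{-1}\circ f\colon V\to [-1,1]^n$; this is a proper map of manifolds with corners which sends $\partial_{\pm i}V$ to $\partial_{\pm i}[-1,1]^n$.

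Now the degree and the distance estimate are immediate. For any regular value $q$ in the interior of $[-1,1]^n$ the preimage $\varphi^{-1}(q) = f^{-1}(\psi(q))$ lies entirely in the interior of $V$, and its signed count equals $\deg(f) \neq 0$; hence $\deg(\varphi)\neq 0$. For the distance, given $x \in \partial_{-i}V$ and $y \in \partial_{+i}V$ one has $f(x)\in F_{-i}$, $f(y)\in F_{+i}$, so
\[
\dist_{\tilde{X}}(x,y)\;\geq\;\frac{\dist_{S^n}(f(x),f(y))}{\Lip(f)}\;\geq\;\frac{c}{\epsilon}\;=\;2d\;>\;d.
\]

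The only mildly technical point is arranging transversality of $f$ across the corners of $\partial C$ while preserving the Lipschitz bound; this is standard (for example by a local mollification, or by first choosing $\psi$ with slightly thicker collar so that a $C^0$-small smoothing of $f$ still maps $V$ into $C$) and does not constitute a real obstacle. Conceptually the entire argument simply trades a small Lipschitz constant in the definition of enlargeability for a large face-distance in the pullback cube, which is the essence of the enlargeability hypothesis.
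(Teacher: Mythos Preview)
Your argument is correct and is exactly the natural proof one would give: pull back a fixed cube in $S^n$ through an $\epsilon$-Lipschitz map of nonzero degree, so that the face distance scales like $c/\epsilon$. In fact the paper states this lemma without proof, treating it as an immediate consequence of Definition~\ref{Defn: enlargeable}; your write-up supplies precisely the standard justification the paper leaves implicit.
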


 \subsection{Counting the intersection number}
We record the following general lemma which would be useful in counting intersection number. For a proof the reader may refer (\cite{Fr23}, Theorem 147.5).

\begin{lemma}\label{lem: counting}
    Let M be a compact oriented $m$-dimensional smooth manifold together with a boundary decomposition
$\partial M=A\cup B$. Let $X$ and $Y$ be a complementary pair of oriented submanifolds of $M$ with $\partial X\subset A$ and $\partial Y\subset B$ intersecting transversally. We write $k = dim(X)$ and we denote by $i: X\longrightarrow M$ and $j : Y \longrightarrow M$ the obvious inclusion maps. Furthermore we denote by $[X] \in H_k(X, \partial X)$ and $[Y] \in H_{m-k}(Y, \partial Y)$ the fundamental classes of $X$ and $Y$. Then the oriented intersection number of $X$ and $Y$ equals
\begin{align*}
    \langle D_M(i_*[X])\smallsmile D_M(j_*[Y]), [M]\rangle = \langle D_M(i_*[X]),j_*[Y]\rangle \in \mathbb{Z}
\end{align*}
Here $[M]\in H_m(M,\partial M)$ denotes the fundamental class of $M$ and $D_M$ denotes the  Poincare-Lefschetz duality map $H_k(M,A)\longrightarrow H^{m-k}(M,B)$ or $H_{m-k}(M,B)\longrightarrow H^k(M,A)$.
\end{lemma}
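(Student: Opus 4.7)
The plan is to prove the two equalities separately: the second is a formal consequence of the cup/cap adjunction of Poincar\'e--Lefschetz duality, while the first is the substantive geometric content, handled via a Thom class. Specifically, to establish the second equality I would apply the standard cup/cap adjunction $\langle \alpha \smile \beta, z \rangle = \langle \alpha, \beta \frown z \rangle$ with $\alpha = D_M(i_*[X]) \in H^{m-k}(M, B)$, $\beta = D_M(j_*[Y]) \in H^k(M, A)$, and $z = [M] \in H_m(M, \partial M) = H_m(M, A \cup B)$. Since Poincar\'e--Lefschetz duality is defined so that $D_M$ inverts the cap product with $[M]$, we have $\beta \frown [M] = j_*[Y]$, which immediately yields $\langle D_M(i_*[X]) \smile D_M(j_*[Y]), [M] \rangle = \langle D_M(i_*[X]), j_*[Y] \rangle$.

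For the first equality, identifying the cohomological pairing with the signed count of intersection points, I would use a tubular neighborhood argument. Choose a closed tubular neighborhood $\nu(X) \subset M$ that is boundary-compatible, meaning $\nu(X) \cap \partial M \subset A$; such a neighborhood exists because $\partial X \subset A$ by hypothesis, and it also ensures $B \subset M \setminus X$. The orientation of $M$ and $X$ makes the normal bundle of $X$ oriented, and the Thom isomorphism produces a Thom class $\tau_X \in H^{m-k}(\nu(X), \partial_v \nu(X))$ whose image in $H^{m-k}(M, B)$, after extension by zero and an appeal to the naturality of Poincar\'e--Lefschetz duality, equals $D_M(i_*[X])$. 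Now pull back to $Y$ via $j$: by transversality the intersection $X \cap Y = \{p_1, \dots, p_r\}$ is finite, and $Y$ meets $\nu(X)$ as a disjoint union of small normal slices near each $p_j$. The pulled-back class $j^* \tau_X \in H^{m-k}(Y, \partial Y)$ is supported at the $p_j$ and contributes a sign $\epsilon_j \in \{\pm 1\}$ at each, determined by whether $T_{p_j}X \oplus T_{p_j}Y$ and $T_{p_j}M$ are compatibly oriented. Evaluating on $[Y]$ yields $\sum_j \epsilon_j$, which is the signed intersection number.

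The main obstacle is the relative boundary bookkeeping. Every tubular neighborhood, Thom class, excision isomorphism, and cup/cap product must be set up compatibly with the decomposition $\partial M = A \cup B$ and the conditions $\partial X \subset A$, $\partial Y \subset B$. The hypotheses are tuned precisely so that $X$ is disjoint from $B$, $Y$ is disjoint from $A$, and the tubular neighborhoods can be chosen in a boundary-respecting way (via collar neighborhoods or boundary-adapted exponential maps); only then do the relative cup and cap products and the two versions of $D_M$ land in the intended groups. Once this compatibility is pinned down, naturality of Poincar\'e--Lefschetz duality makes every identification routine, matching the detailed treatment in the cited reference \cite{Fr23}.
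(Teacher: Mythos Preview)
Your proposal is sound, but note that the paper does not actually prove this lemma: it simply records the statement and refers the reader to \cite{Fr23}, Theorem~147.5, for a proof. So there is no ``paper's own proof'' to compare against beyond that citation.

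That said, your outline is precisely the standard argument one finds in such references. The second equality is indeed a one-line consequence of the cup/cap adjunction together with the defining property $D_M(j_*[Y])\frown [M]=j_*[Y]$ of Poincar\'e--Lefschetz duality. For the first equality, identifying the Poincar\'e--Lefschetz dual $D_M(i_*[X])$ with (the extension by zero of) the Thom class of the normal bundle of $X$, and then restricting to $Y$ where transversality reduces everything to a signed count over the finite set $X\cap Y$, is exactly the route taken in Friedl's notes. Your emphasis on the boundary bookkeeping---choosing $\nu(X)$ so that $\nu(X)\cap\partial M\subset A$, ensuring $B\subset M\setminus X$, and tracking which relative groups the various classes live in---is the only place where care is genuinely required, and you have identified it correctly.
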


	\subsection{Filling estimate and slice and dice}
	In this subsection we recall important elements used in the proof of the aspherical conjecture up to dimension 5 in \cite{CL20}\cite{Gro20}. The first lemma focus on filling of chain. Since later we have to apply it to non-contractible space, we have made necessary refinements on the original filling estimate in \cite{CL20}\cite{Gro20}. 
	\begin{lemma}\label{lem: HZ23}
		(\cite{HZ23}, Lemma 2.1)
		Let $\pi:(\Tilde{Y}^{n},\tilde g)\longrightarrow (Y^{n},g)$ be a Riemannian covering of the compact manifold $Y$. Then for any $r>0$ there is a constant $R=R(r)>0$ with the property that for any $i$-dimensional boundary $\alpha$ in $\tilde{Y}$ with $\diam(\alpha)\le r$, there is a $i+1$-chain $\beta$ in $\Tilde{Y}$ with $\diam(\beta)\le R$ and $\partial\beta = \alpha$. Here we use $diam(\cdot)$ to denote the diameter of the support of the chain.
	\end{lemma}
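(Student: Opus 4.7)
The plan is to reduce the filling problem in the (possibly non-compact) cover $\Tilde{Y}$ to one inside a single fixed compact region of $\Tilde{Y}$, and then leverage finite generation of integral homology. The decisive observation that drives everything is that the diameter of a singular chain depends only on its support, so multiplying by an integer never enlarges it. Concretely, I would first pick a (Dirichlet) fundamental domain $\mathcal{F}\subset\Tilde{Y}$ for the deck transformation group $\Gamma$, which is compact since $Y$ is, and enlarge $\mathcal{F}$ to a compact codimension-$0$ submanifold $U\subset\Tilde{Y}$ with smooth boundary containing the full $r$-neighborhood $\overline{N_r(\mathcal{F})}$. Then $H_i(U;\mathbb{Z})$ is finitely generated, since $U$ is a compact manifold with boundary and hence has the homotopy type of a finite CW complex, and therefore so is the subgroup $K:=\ker\bigl(H_i(U;\mathbb{Z})\to H_i(\Tilde{Y};\mathbb{Z})\bigr)$. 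Observe that any chain $\alpha\subset\Tilde{Y}$ with $\diam(\alpha)\le r$ can be translated into $U$ by an appropriate $g\in\Gamma$ (sending a chosen point of $\supp(\alpha)$ into $\mathcal{F}$), and the translated chain $g\alpha$ still bounds in $\Tilde{Y}$ because $g$ is an isometric homeomorphism; this reduces the whole problem to a uniform filling bound for $i$-boundaries supported inside $U$.

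To produce such a bound, I would fix generators $[z_1],\dots,[z_m]$ of $K$ represented by cycles $z_j\subset U$, and choose once and for all $(i+1)$-chains $w_j\subset\Tilde{Y}$ with $\partial w_j=z_j$. Setting $R:=\diam\bigl(U\cup\bigcup_j\supp(w_j)\bigr)$, any $i$-boundary $\alpha\subset U$ admits an expansion $[\alpha]=\sum_j c_j[z_j]$ in $K$ with $c_j\in\mathbb{Z}$, which at the chain level means $\alpha-\sum_j c_j z_j=\partial u$ for some $u\in C_{i+1}(U)$; then $\beta:=u+\sum_j c_j w_j$ satisfies $\partial\beta=\alpha$. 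Since $\supp(c_jw_j)=\supp(w_j)$ whenever $c_j\ne 0$, the support of $\beta$ lies in the prescribed finite union $U\cup\bigcup_j\supp(w_j)$, so $\diam(\beta)\le R$ independently of $\alpha$. Translating back by $g^{-1}$ then produces the required filling of the original $\alpha$ with the same diameter bound, completing the argument with $R=R(r)$.

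The main obstacle I anticipate is the apparent unboundedness of the coefficients $c_j$ in the expansion of $[\alpha]$, which naively might force the filling $\beta$ to grow without control. The resolution, which is the whole content of the lemma, is that diameter is insensitive to integer multiplicities: $\supp(c\cdot w)=\supp(w)$ for any $c\in\mathbb{Z}\setminus\{0\}$, so the geometric footprint of $\beta$ always fits inside the fixed finite collection $\{U,w_1,\dots,w_m\}$, whose diameter depends only on $r$ and the covering. Everything else is packaging, using the fundamental-domain translation to distribute this uniform bound throughout the noncompact cover $\Tilde{Y}$.
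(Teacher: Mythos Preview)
Your proposal is correct and follows essentially the same approach as the paper: deck-translate $\alpha$ into a fixed compact region (you use an $r$-thickened fundamental domain, the paper uses a ball $B_q(r+\diam Y)$), then exploit finite generation of the kernel $K=\ker\bigl(H_i(\text{compact region})\to H_i(\Tilde{Y})\bigr)$ to produce a uniform filling bound. Your treatment is slightly more explicit in writing out generators $z_j$ and their fillings $w_j$ and in stressing that integer multiplicities do not enlarge supports, whereas the paper compresses this into the single observation that one can choose $R$ so that $K\to H_i(B_q(R))$ is the zero map; the substance is identical.
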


        \begin{proof}
            Fix a point $q\in\Tilde{Y}$ and a point $p$ in the support of $\alpha$. We can always find a Deck transformation $\Phi$ such that $d(\Phi(p),q)\le D:= \diam(Y)$. As a result, $\Phi_\#(\alpha)$ is supported in the ball $ B_q(r+D)$. Denote $\mathcal K = \ker (H_i(B_q(r+D))\to H_k(\tilde Y))$, then $\Phi_\#(\alpha)$ lies in $\mathcal{K}$ because it is a boundary. Since $\mathcal{K}$ is finitely generated, we can find a positive constant $R$ such that $\mathcal K\to H_i(B_q(R))$ is a zero map. This yields that $\Phi_\#(\alpha)$ can be filled by a chain of diameter no greater than $R$ and the same thing also holds for $\alpha$. This completes the proof of Lemma \ref{lem: HZ23}.
        \end{proof}
	
	The next lemma is a slight refiment of the slice and dice procedure developed in \cite{CL20}.
	
	\begin{lemma}\label{lem: slice and dice}
		
		Let $\Gamma^l$ be a closed connected Riemannian manifold with $\mathbb{T}^N$-stabilized scalar curvature $Sc_N^{\rtimes}(\Gamma)\ge 1$, $l+N\le 7$.
		
		(1) If $l=2$, then $\Gamma$ is homeomorphic to a sphere, and there exists a universal constant $L_0$, such that.
		
		\begin{align*}
			\diam(\Gamma)\le L_0
		\end{align*}
		
		(2) If $l=3$, then $\Gamma$ can be divided into regions $U_i (i = 1,2,\dots,u)$ by a collection of mutually disjoint embedded spheres $S_{\alpha}$'s ($\alpha\in A$, here $A$ is a finite index set):
		
		\begin{align*}
			&\Gamma = \bigcup_{i=1}^u U_i\\
			&\partial U_i = \bigcup_{\alpha\in A(i)} S_{\alpha}
		\end{align*}
		
		Furthermore, there exists a universal constant $L_0$, such that
		
		\begin{align*}
			&\diam(U_i)\le L_0\\
			&\diam(S_{\alpha})\le L_0
		\end{align*}
	\end{lemma}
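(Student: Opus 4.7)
For part (1), my plan is to identify $\Gamma$ topologically and then bound its diameter. Since $Sc_N^\rtimes(\Gamma)\ge 1$ forces $\Gamma\times\mathbb{T}^N$ to carry a (warped) metric of scalar curvature $\ge 1$, the case $\Gamma=\mathbb{T}^2$ is excluded, otherwise $\Gamma\times\mathbb{T}^N=\mathbb{T}^{N+2}$ would admit a PSC metric in contradiction with the Schoen--Yau/Gromov--Lawson theorem for the torus. The classification of closed orientable surfaces then forces $\Gamma\cong S^2$. For the diameter bound, I would invoke the $\mathbb{T}^N$-stabilized band width inequality: two points at distance $>L$ in $\Gamma$ give rise to a band of width $>L$ inside $\Gamma\times\mathbb{T}^N$, but any closed Riemannian band in dimension $\le 7$ with scalar curvature $\ge 1$ has width bounded by a universal constant of order $2\pi\sqrt{(l+N)/(l+N-1)}$.

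For part (2), the plan is to adapt the $\mu$-bubble slice-and-dice of \cite{CL20} to the $\mathbb{T}^N$-stabilized setting. Starting from a point $p\in\Gamma$, I would choose a warping profile $h$ which blows up at the two ends of an annular band around $p$ of width equal to the universal band width constant, and solve the prescribed mean curvature ($\mu$-bubble) problem against $h$. Since $l+N\le 7$, the minimizer $\Sigma$ is a smooth stable hypersurface; its stability inequality combined with a stabilized Gauss equation upgrades the $\mathbb{T}^N$-stabilized PSC of $\Gamma$ to $\mathbb{T}^{N+1}$-stabilized PSC of $\Sigma$. Part (1) then forces each component of $\Sigma$ to be a 2-sphere of diameter $\le L_0$. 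The cut-off piece containing $p$ sits inside the band of bounded width, so its diameter is controlled by this width plus the boundary sphere diameter. Iterating the construction in the remaining components exhausts $\Gamma$ in finitely many steps: each step removes a piece whose volume is bounded below by a universal constant (coming from the fixed band width), so the process terminates.

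The main difficulty I anticipate is ensuring that the constants $L_0$ are truly \emph{universal}, depending only on $l+N\le 7$ and the scalar curvature lower bound $1$, and not on $\Gamma$ or on the chosen point $p$. This requires carefully tracking how $\mathbb{T}^N$-stabilization interacts with the first and second variation formulae of the $\mu$-bubble functional, and producing a stabilized Gauss equation on $\Sigma$ that yields $\mathbb{T}^{N+1}$-stabilized PSC with a universal constant. The corresponding unstabilized version is established in \cite{CL20}, and the task here is essentially to transcribe that argument while keeping track of the extra warping factors from $\mathbb{T}^N$; once this uniformity is in place, the decomposition claimed in part (2) follows directly from the iteration.
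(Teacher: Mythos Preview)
Your treatment of part (1) is essentially the standard argument and is fine; the paper simply defers to \cite{CL20} and \cite{Gro20} here. (Minor point: you only exclude $\mathbb{T}^2$, but the same enlargeability argument rules out every surface of positive genus, and orientability is implicit from the ambient setup.)

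Part (2), however, has a real gap. The procedure in \cite{CL20} is not a single iterated $\mu$-bubble around successive basepoints; it is a two-stage \emph{slice-and-dice}. The \emph{slicing} stage first produces a finite disjoint collection $\mathcal{S}$ of embedded $2$-spheres (as minimizers of a weighted area functional) so that $b_1(\Gamma\setminus\mathcal{S})=0$. Only then does one \emph{dice} by solving free boundary $\mu$-bubble problems on the concentric bands $B_p((j{+}1)L_2)\setminus B_p(jL_2)$ inside $\Gamma\setminus\mathcal{S}$. Because the boundary is now $\mathcal{S}$, the dicing surfaces are either closed spheres or \emph{disks} with boundary on $\mathcal{S}$. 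Your scheme omits the slicing step entirely, and your inductive step tacitly assumes that every $\mu$-bubble component is a closed $2$-sphere. After the first cut this is no longer true: the remaining piece has sphere boundary, subsequent bands meet that boundary, and the resulting free-boundary $\mu$-bubbles can be disks. The conclusion of the lemma demands a decomposition by mutually disjoint embedded \emph{spheres}, so these disks must be dealt with.

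This is precisely the content of the paper's proof. It takes the output of \cite{CL20} (slicing spheres $\mathcal{S}$, dicing spheres $\mathcal{D}_2$, and dicing disks $\mathcal{D}_1$) and then performs an explicit modification: each disk $D\in\mathcal{D}_1$ has connected boundary on a unique slicing sphere $S$; by the Jordan curve theorem $\partial D$ bounds a cap $D^+\subset S$, and one replaces $D$ by the sphere $D\cup D^+$ pushed slightly off $S$. Handling several disks on the same $S$ by starting from innermost curves keeps the new spheres disjoint, and the diameter bound survives with $L_0=2L_1+1$. Your termination-by-volume argument is also unjustified as stated (there is no a priori lower volume bound on the piece removed), whereas in \cite{CL20} finiteness comes for free because one dices all concentric bands simultaneously from a fixed basepoint and there are only $\lfloor\diam(\Gamma\setminus\mathcal{S})/L_2\rfloor+1$ of them.
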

	\begin{proof}
		The lemma follows almost from the argument of \cite{CL20}, but some necessary modification is needed. Let us first recall the slice and dice procedure developed in \cite{CL20}. By looking for minimizing surface for certain weighted area functional on $\Gamma$, one obtains a finite collection of the slicing surface $\mathcal{S}$, each homeomorphic to $S^2$. The consequence is that the first betti number of $M\backslash\mathcal{S}$ vanishes, and one could start the dicing procedure as follows: Fix a point $p$ and some universal constant $L_2$, solving weighted free boundary $\mu$-bubble problem on the Riemannian bands
  \begin{align*}
      \mathcal{V}_j = B_p((j+1)L_2)\backslash B_p(jL_2), j = 0,1,2,\dots, [\diam(\Gamma\backslash\mathcal{S})/L_2]+1
  \end{align*}
  It was shown in \cite{CL20} that the dicing surfaces, {\it i.e.} the topological boundary for the $\mu$-bubbles are either topological disk or topological sphere. Denote $\mathcal{D}_1$ to be the disjoint collection of the dicing surface homeomorphic to disk and $\mathcal{D}_2$ to be the disjoint collection of the dicing surface homeomorphic to sphere. \cite{CL20} has shown $\mathcal{S}$ together with $\mathcal{D}_1$ and $\mathcal{D}_2$ divide $\Gamma$ into regions of uniform diameter bound $L_1$, and that the diameter of each element in $\mathcal{S},\mathcal{D}_1$ and $\mathcal{D}_2$ has diameter bound $L_1$. Morover, one has $\mathcal{S}\cap\mathcal{D}_2\ne \emptyset$.
  
        Now we have to modify surfaces in $\mathcal{D}_1$ to obtain a new class of mutually disjoint spheres. For any $D\in \mathcal{D}_1$, since $\partial D$ is connected, it could touch exactly one slicing surface $S$. Fix this slicing surface, and denote $\lbrace D_1,D_2,\dots,D_t\rbrace$ to be the elements in $\mathcal{D}_1$ touching $S$. We begin by dealing with the case that $t=1$. At this time, by the Jordan curve Theorem, $\partial D_1$ devides $S$ into two parts $D_1^+$ and $D_1^-$. Then $D_1\cup D_1^{\pm}$ turn out to be the boundary of some region obtained in \cite{CL20}, denoted by $U_1^{\pm}$, with $\diam(U_1^{\pm})<L_1$. 

        Let $S\times [0,\epsilon] (\epsilon<1)$ be the small tubular neighbourhood of $S$ and $V = D_1^+\times [0,\epsilon]$. Define
        \begin{align*}
            \hat{U}_1^+ = U_1^+\backslash V,\quad \hat{U}_1^- = U_1^-\cup V
        \end{align*}
        We have
        \begin{align*}
            &\diam(\hat{U}_1^+)\le \diam(U_1^+)<L_1\\
            &\diam(\hat{U}_1^-)\le \diam(U_1^-)+\diam(V)\le \diam(U_1^-)+\diam(S)+\epsilon<2L_1+1
        \end{align*}
        Substitute $D_1$ by the sphere $\hat{D}_1 = D_1\cup D_1^+\times \lbrace\epsilon\rbrace$, we get $\hat{D}_1$ and $S$ successfully separated.

        For the general case that $S$ touches $t$ disks, consider the curves $\partial D_j$ on $S$. One could start with an innermost curve, and assume it is $D_1$ without loss of generality. Then the above procedure applies. By repeating this procedure for $D_j$'s, and at the same time choose $\epsilon$ smaller and smaller, one is able to separate all this disks away from $S$. Finally, we obtain a collection of spheres separating $\Gamma$ into regions with diameter bounded by $L_0=2L_1+1$, and this completes the proof of Lemma \ref{lem: slice and dice}.
	\end{proof}
	
	\subsection{$\mu$-bubble reduction in cubical region}
	
	The cube inequality was first introduced by Gromov in his Four Lecture \cite{Gro23} to describe the distance stretching for certain cubical region in multi-directions. Later, it was studied in \cite{WXY21} a high dimensional version of this inequality in spin setting. In this subsection, we focus on a $\mu$-bubble reduction lemma in cubical region. A detailed proof for similar conclusion has already appeared in \cite{GZ21}. However, for the convenience of the reader, we would like to collect the basic notations and results in this subsection. 
	
	Let $X$ be a compact Riemannian manifold of dimension $n+k\ge 3$ with boundary. We shall divide the boundary of $X$ into two piecewisely smooth parts, the {\it effective boundary} and the {\it side boundary}, such that they have a common boundary in $\partial X$. We denoted this by $\partial X = \partial_{eff}\cup\partial_{side}$. 
	
	Let
	\begin{align*}
		f: (X,\partial_{eff})\longrightarrow([-1,1]^n,\partial[-1,1]^n)
	\end{align*}
	be a continuous map from $X$ to a $n$-cube. In our convention we shall always assume that the effective boundary coincides with the inverse image of the boundary of the cube under the map $f$. Let
	\begin{align*}
		h = f_![t] \in H_k(X,\partial_{side}), t\in int[-1,1]^n
	\end{align*}
	be the point pullback of $f$. Here $f_!$ is the wrong way map between the homology group.
	
	Let $\partial_{-i},\partial_{+i}\subset \partial [-1,1]^n$ be the pair of opposite faces of the cube for $i = 1,2,\dots, n$. We further denote
	\begin{align*}
		&\partial_{-i}X = f^{-1}(\partial_{-i})\\
		&\partial_{+i}X = f^{-1}(\partial_{+i})
	\end{align*}
	to be the portion of $\partial_{eff}$, and
	\begin{align*}
		d_i = dist(\partial_{-i}X, \partial_{+i}X), i = 1,2,\dots, n
	\end{align*}
	to be the distance of the distinguished boundary portion in $X$.
	
	\begin{definition}
		Let $(Y,g)$ be a Riemannian manifold. We say $Y$ has $\mathbb{T}^N$-stabilized scalar curvature at least $\sigma$, if there exists a Riemannian manifold $(Y_N,g_N)$, $Y_N = Y\times \mathbb{T}^N$ and $g_N$ has the following form
		\begin{align*}
			g_N = g+\sum_{i=1}^{N} \varphi_i^2dt_i^2
		\end{align*}
		for some positive smooth function $\varphi_i$ on $Y$, such that
		\begin{align*}
			Sc(g_N)\ge \sigma
		\end{align*}
		We denote this by
		\begin{align*}
			Sc_N^{\rtimes}(Y)\ge \sigma
		\end{align*}
	\end{definition}
	
	Now we can state the $\mu$-bubble reduction lemma.
	\begin{lemma} \label{lem: mu bubble reduction}($\mu$-bubble Reduction Lemma In Cubical Region)
		Let $X, h, d_i$ be as above. Assume that $Sc^{\rtimes}_N(X)\ge\sigma>C_{n,k,N}(d_i)$, $n+k+N\le 7$, where
		\begin{align}\label{eq: Cnkdi}
			C_{n,k,N}(d_i) = \frac{4(n+N+k-1)\pi^2}{n+N+k}\cdot\sum_{i=1}^{n}\frac{1}{d_i^2}.
		\end{align}
		Assume further that $h$ is nontrivial in homology. Then there exists a smooth embedding surface $\Sigma^k$ in $X$ representing the homology class $h$, such that
		\begin{align*}
			Sc_{n+N}^{\rtimes}(g_{\Sigma})\ge \sigma-C_{n,k,N}(d_i)
		\end{align*}
		Here $g_{\Sigma}$ is the induced metric on $\Sigma$ from $X$.
	\end{lemma}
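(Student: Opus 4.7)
The plan is to carry out an iterated $\mu$-bubble reduction along the $n$ coordinate directions of the cube, performed inside the stabilized manifold $(X_N, g_N) = (X \times \mathbb{T}^N, g_N)$, which has dimension $n+k+N \leq 7$ and scalar curvature $\geq \sigma$. I would construct a nested chain of $\mathbb{T}^N$-invariant smooth submanifolds
\[
X_N = Y_0 \supset Y_1 \supset \cdots \supset Y_n, \qquad \dim Y_i = n+k+N-i,
\]
such that after stripping the accumulated warped torus factors $Y_n$ descends to the desired surface $\Sigma^k \subset X$ representing $h$. The idea for each step is to solve a $\mu$-bubble problem transverse to one coordinate direction, using a weight that blows up on the two opposite faces, and then absorb the stability inequality into a new warped $S^1$-factor.

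At the inductive step $Y_i \mapsto Y_{i+1}$, the restricted function $f_{i+1}|_{Y_i}$ gives a map $Y_i \to [-1,1]$ whose preimages of $\pm 1$ remain at distance $\geq d_{i+1}$ in $Y_i$ (distances along the $(i{+}1)$-th direction are unaffected by the previous $\mu$-bubbles, which separated level sets in orthogonal directions). I would solve the weighted $\mu$-bubble problem of minimizing
\[
\mathcal{A}(\Omega) = \mathrm{Area}(\partial^*\Omega) - \int_\Omega \mu_{i+1}\, d\mathrm{vol},
\]
where $\mu_{i+1}$ depends only on $f_{i+1}$ and blows up on the two relevant boundary components, tuned so that the minimizing $\mu$-bubble satisfies the sharp bound $\frac{4(m-1)\pi^2}{m\, d_{i+1}^2}$ with $m = n+k+N$ (compare \cite{Gro18}). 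Since $\mu_{i+1}$ is $\mathbb{T}^{N+i}$-invariant, the minimizer $Y_{i+1}$ can be chosen invariant as well, and smoothness follows from the dimension assumption $m \leq 7$. Plugging the first eigenfunction of the stability operator into a conformal warping produces a new $S^1$-factor on $Y_{i+1}$, upgrading the stabilization from $\mathbb{T}^{N+i}$ to $\mathbb{T}^{N+i+1}$; the scalar curvature of the resulting warped metric drops by at most $\frac{4(m-1)\pi^2}{m\, d_{i+1}^2}$. The decisive feature is that the geometric dimension of $Y_i$ decreases by $1$ while the stabilization dimension grows by $1$, so the total dimension $m = n+k+N$, and with it the prefactor $\frac{4(m-1)\pi^2}{m}$, stays constant across all $n$ iterations. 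Summing the losses gives exactly $C_{n,k,N}(d_i)$.

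After $n$ steps $Y_n$ has geometric dimension $k$ and carries a $\mathbb{T}^{n+N}$-warped metric of scalar curvature $\geq \sigma - C_{n,k,N}(d_i)$; the underlying base surface is the announced $\Sigma^k \subset X$ with $Sc_{n+N}^\rtimes(g_\Sigma) \geq \sigma - C_{n,k,N}(d_i)$. For the homology identification, each $\mu$-bubble $Y_{i+1}$ is homologous rel $\partial_{side}$ in $Y_i$ to a regular level set of $f_{i+1}|_{Y_i}$; iterating, $Y_n$ represents the intersection class $\bigcap_{i=1}^n [f_i^{-1}(t_i)] = [f^{-1}(t)] \times [\mathbb{T}^N]$, so $\Sigma$ indeed represents $h = f_![t]$. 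The main obstacle will be the careful bookkeeping of the conformal/warping step at each iteration, ensuring that the newly introduced $S^1$-factor genuinely assembles with the existing $\mathbb{T}^{N+i}$-warping into a clean $\mathbb{T}^{N+i+1}$-stabilized metric as in the definition; this is precisely what preserves the prefactor $\frac{4(m-1)\pi^2}{m}$ with constant $m$ throughout the iteration and thereby yields the sharp constant in the lemma. A detailed execution of this iteration in a closely related setting appears in \cite{GZ21}, which I would adapt to the present cubical homology class $h$.
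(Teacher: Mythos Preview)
Your proposal is correct and follows essentially the same route as the paper's proof: an induction on $n$ that at each step applies the Equivariant Separation Lemma (Lemma~\ref{lem: equivariant seperation}) inside the stabilized space, absorbing the stability into a new warped $S^1$-factor so that the total dimension $m=n+k+N$ and hence the prefactor $\frac{4(m-1)\pi^2}{m}$ stay constant. Two small points you should tighten in execution: (i) the distances are preserved not because the directions are ``orthogonal'' but simply because $Y_i\hookrightarrow X$ is an isometric inclusion, so $\dist_{Y_i}(\partial_{-(i+1)}Y_i,\partial_{+(i+1)}Y_i)\ge d_{i+1}$; (ii) the paper checks explicitly that the pulled-back class on each $Y_i$ is still nonzero (via an intersection argument), which guarantees the next $\mu$-bubble is nontrivial---you state this only implicitly in your final homology identification.
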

	
	For the proof, we need the following {\it Equivarent Seperation Lemma} proposed by Gromov, see Section 5.4 in \cite{Gro23}, which was proved by $\mu$-bubble. A detailed proof of this lemma could also be found in \cite{WY23}.
	
	\begin{lemma}\label{lem: equivariant seperation}(Equivariant Seperation Lemma)
		Let $X$ be a $m$-dimensional Riemannian band $m\le 7$, possibly non-compact or non-complete. $d = \width(X)$, $Sc(X)>\sigma$. 
		
		Then there is a smooth hypersurface $Y$ seperating $\partial_-X$ and $\partial_+X$, such that
		\begin{align*}
			Sc_1^{\rtimes}(Y)>\sigma-\frac{4(m-1)\pi^2}{md^2}
		\end{align*}
		
		Moreover, if $X$ admits an isometric action by a compact Lie group $G$, then so is $Y$ and the function $\phi$ on $Y$ used to define the $\mathbb{T}^1$-stabilized scalar curvature.
	\end{lemma}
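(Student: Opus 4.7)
The plan is to run Gromov's warped $\mu$-bubble construction on $X$ with a carefully chosen prescribing function, then stabilize the resulting hypersurface by a single positive principal eigenfunction of the associated Jacobi-type operator. To begin, fix a smooth $G$-invariant function $\rho:\mathrm{int}(X)\to(-d/2,d/2)$ with $|\nabla\rho|\le 1$ and $\rho\to\pm d/2$ as $x\to\partial_{\pm}X$, obtained by averaging a mollified signed-distance function over the compact group $G$. Set $C:=\tfrac{4(m-1)\pi^{2}}{md^{2}}$ and choose
\begin{equation*}
h(\rho)\;=\;-\frac{2(m-1)\pi}{md}\tan\!\left(\tfrac{\pi}{d}\rho\right),
\end{equation*}
which is the unique solution of $2h'+\tfrac{m}{m-1}h^{2}=-C$ on $(-d/2,d/2)$ with $h(0)=0$, blowing up exactly at $\pm d/2$. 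Consider the warped $\mu$-bubble functional
\begin{equation*}
\mathcal{A}(\Omega)\;=\;\mathcal{H}^{m-1}(\partial^{*}\Omega)-\int_{X}(\chi_{\Omega}-\chi_{\Omega_{0}})\,(h\circ\rho)\,dv_{g},
\end{equation*}
minimized over $G$-invariant Caccioppoli sets $\Omega$ agreeing with a fixed $G$-invariant reference $\Omega_{0}$ near $\partial_{\pm}X$. The blow-up of $h$ at $\partial X$ serves as a barrier trapping minimizers strictly inside $X$, and the restriction $m\le 7$ supplies interior smoothness, so one obtains a smooth $G$-invariant hypersurface $Y=\partial^{*}\Omega^{*}$ separating $\partial_{-}X$ from $\partial_{+}X$.

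Next, I extract the scalar curvature bound from second variation. The first variation gives $H_{Y}=h(\rho)|_{Y}$, and the Schoen--Yau calculation, after eliminating $\mathrm{Ric}_{X}(\nu,\nu)$ via the Gauss identity $\mathrm{Sc}_{Y}=\mathrm{Sc}_{X}-2\mathrm{Ric}_{X}(\nu,\nu)+H^{2}-|A|^{2}$, yields for every $\psi\in C^{\infty}(Y)$,
\begin{equation*}
\int_{Y}|\nabla\psi|^{2}\;\ge\;\int_{Y}\Big(\tfrac{1}{2}(\mathrm{Sc}_{X}-\mathrm{Sc}_{Y})+\tfrac{1}{2}|A|^{2}+\tfrac{1}{2}h^{2}+\partial_{\nu}h\Big)\psi^{2}.
\end{equation*}
Let $\phi>0$ be the principal eigenfunction of the associated Schr\"odinger-type operator, with eigenvalue $\lambda_{1}\ge 0$; by simplicity of the principal eigenvalue of a $G$-equivariant operator, $\phi$ is $G$-invariant. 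Using $\mathrm{Sc}(g_{Y}+\phi^{2}d\theta^{2})=\mathrm{Sc}_{Y}-2\phi^{-1}\Delta_{Y}\phi$ and substituting the eigenvalue identity gives
\begin{equation*}
\mathrm{Sc}(g_{Y}+\phi^{2}d\theta^{2})\;\ge\;2\lambda_{1}+\mathrm{Sc}_{X}+|A|^{2}+h^{2}+2\partial_{\nu}h.
\end{equation*}
Invoking $|A|^{2}\ge H^{2}/(m-1)=h^{2}/(m-1)$ together with $\partial_{\nu}h=h'(\rho)\langle\nabla\rho,\nu\rangle\ge h'(\rho)$ (since $h'<0$ and $|\langle\nabla\rho,\nu\rangle|\le 1$), the three trailing terms combine, via the defining ODE of $h$, to $\tfrac{m}{m-1}h^{2}+2h'(\rho)=-C$. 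Hence $\mathrm{Sc}_{1}^{\rtimes}(Y)>\sigma-C$, as required.

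The main technical obstacles lie in the equivariance and the non-compact/non-complete allowance. For equivariance, since $\mathcal{A}$ is $G$-invariant and the class of $G$-invariant Caccioppoli sets is sequentially closed, the direct method inside this class furnishes a $G$-invariant minimizer, whose boundary is smooth for $m\le 7$; moreover the Euler--Lagrange and stability relations are unaffected because variations decompose $L^{2}$-orthogonally into $G$-invariant and $G$-anti-invariant parts, and stationarity inside the $G$-invariant subclass of a $G$-invariant critical set forces full stationarity. For non-compact or non-complete $X$, I would exhaust $X$ by an increasing family of compact $G$-invariant sub-bands $X_{i}$ still of width exceeding $d$, apply the above construction to each, and extract a limit hypersurface; the barrier property of $h$ (its blow-up at $\rho=\pm d/2$) keeps the minimizers $Y_{i}$ uniformly inside the slab $\{|\rho|\le d/2-\varepsilon\}$, yielding the required $G$-invariant limit $Y$ with the same scalar curvature estimate. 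The sharpest point in the proof is the exact matching between the chosen ODE and the Schoen--Yau stability bound, which is what forces the specific constant $\tfrac{4(m-1)\pi^{2}}{md^{2}}$ to appear.
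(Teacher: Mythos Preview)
The paper does not give its own proof of this lemma: it is quoted from Gromov's Four Lectures \cite[\S5.4]{Gro23}, with a detailed write-up attributed to \cite{WY23}. Your sketch is exactly the warped $\mu$-bubble argument those references carry out---the tangent profile $h$, the Schoen--Yau rearrangement via the Gauss equation, and the stabilization by a positive solution of the Jacobi-type equation---so there is no alternative strategy to compare against.

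One technical point deserves tightening. You minimize $\mathcal{A}$ only over $G$-invariant Caccioppoli sets and then appeal to a Palais-type principle. That principle gives you full \emph{stationarity} (first variation vanishes in every direction), and it is true that the Hessian at a $G$-invariant critical point block-decomposes along the isotypic decomposition; but minimality inside the $G$-invariant class only tells you the invariant block is nonnegative, not the complementary blocks. The eigenfunction step needs the full second-variation inequality against all $\psi$. The standard fix---and the one taken in the references---is to minimize $\mathcal{A}$ over \emph{all} Caccioppoli sets, note that $g\cdot\Omega^{*}$ is again a minimizer for every $g\in G$ since $\mathcal{A}$ is $G$-invariant, and then use the lattice property of minimizers (or the strong maximum principle) to select a $G$-invariant one, e.g.\ the outermost. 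This yields a $G$-invariant hypersurface that is genuinely stable, and the simplicity of the principal eigenvalue then forces $\phi$ to be $G$-invariant as you say. With that adjustment your argument is complete.
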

	
	\begin{proof}[Proof of Lemma \ref{lem: mu bubble reduction}]
		Fixing $k,N$, we will make induction on $n$. when $n=1$, the conclusion is just what Lemma \ref{lem: equivariant seperation} says. Assume the conslusion is true for $n-1$, let us consider the case for $n$. Denote $\partial' = \bigcup_{i=2}^n \partial_{\pm i}$ and $\partial'X = \bigcup_{i=2}^n \partial_{\pm i} X$. By a free boundary version of Lemma \ref{lem: equivariant seperation}, there exists a hypersurface $(Y,\partial Y)\subset (X,\partial'X)$, seperating $\partial_{\pm 1}X$, with 
		\begin{align*}
			Sc_{N+1}^{\rtimes}(Y)\ge \sigma-\frac{4(n+N+k-1)\pi^2}{n+N+k}\cdot\frac{1}{d_1^2}.
		\end{align*}
		To see how the $\mathbb{T}$-stabilized scalar curvature plays its role, one just need to apply Lemma \ref{lem: equivariant seperation} to the stabilized space $\mathbb{T}^N\rtimes X$. Since $Y$ is obtained by $\mu$-bubble, it is clear that $Y$ and $\partial_{-1}X$ bound a region in $(X,\partial'X)$. By rewriting $[-1,1]^n = [-1,1]\times I^{n-1}$ and recall $\partial_{-1}X = f^{-1}(\lbrace -1\rbrace\times I^{n-1})$, we have
		\begin{align*}
			[Y] = f_!([\lbrace 0\rbrace\times I^{n-1}])\in H_{n+k-1}(X,\partial'X)
		\end{align*}
		Here we regard $[Y]$ as homology class in $H_{n+k-1}(X,\partial'X)$ and $[\lbrace 0\rbrace\times I^{n-1}]$ as homology class in $H_{n-1}([-1,1]^n,\partial')$.
		
		Consider $f_1 = \pi\circ f: Y\longrightarrow I^{n-1}$, where $\pi: [-1,1]\times I^{n-1}\longrightarrow I^{n-1}$ denotes the projection map. Such $f_1$ is compatible with the cube structure of $Y$: $\partial_{\pm i}Y = \partial_{\pm i}X\cap Y$ for $i=2,3,\dots,n$, while $\partial_{side}Y = \partial_{side}X\cap \partial Y$. 
		
		We claim $f_!(0)\ne 0\in H_k(Y,\partial_{side}Y)$. Without loss of generality, assume $0\in I^{n-1}$ is a regular value of $f_1$. Then the element $f_{1!}(0)$ is represented by $Y\cap f^{-1}(\pi^{-1}(0)) = Y\cap f^{-1}([-1,1]\times\lbrace 0 \rbrace)$. On the other hand, $Y$ and $f^{-1}(\lbrace 0 \rbrace\times I^{n-1})$ represents the same class in $H_{n+k-1}(X,\partial'X)$, we have $Y\cap f^{-1}([-1,1]\times\lbrace 0 \rbrace)$ and $f^{-1}(\lbrace 0 \rbrace\times I^{n-1})\cap f^{-1}([-1,1]\times\lbrace 0 \rbrace) = f^{-1}(0)$ represent the same class in $H_k(X,\partial_{side}X)$. Since $[f^{-1}(0)] = h\ne 0$, the claim is true. Then by the induction hypothesis in dimension $n-1$, we conclude the proof of the lemma.

	\end{proof}
	
	\section{Topological setting for weakly relative aspherical pair}
	
	In this section, we investigate topological properties for weakly relative aspherical pair, which will be repeatedly used in our proof of the main theorems. For general weakly relative aspherical pair $(Y,X)$, we can always pass $Y$ to its {\it relative universal covering} $\Tilde{Y}$, such that $\pi_1(X)\longrightarrow \pi_1(\Tilde{Y})$ is surjective. This yields $\pi_i(\Tilde{Y},X)=0$ for $1\le i\le k$. Throughout this section for simplicity of the notation we still use $Y$ to represent this {\it relative universal covering} $\Tilde{Y}$.
	
	\begin{lemma}\label{lem: intersection}
		Let $Y^{n+k}$ be an oriented manifold and $i:X^n\longrightarrow Y^{n+k}$ an embedded closed submanifold with trivial normal bundle, such that $\pi_i(Y,X)=0, i=1,2,\dots,k$. Let $V$ be the tubular neighbourhood of $X$ in $Y$. Then
		
		(1) $H_i(Y,X)=0, i = 1,2,\dots,k$.
		
		(2) There holds the isomorphism
		\begin{align}\label{eq: topo isomorphism 1}
			H_{k-1}(Y\backslash X) = H_{k-1}(\mathbb{S}^{k-1}\times X) = \mathbb{Z}\oplus H_{k-1}(X)
		\end{align}
	\end{lemma}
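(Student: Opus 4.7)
The plan is to establish (1) by the relative Hurewicz theorem and (2) by combining excision with a splitting argument that exploits triviality of the normal bundle.

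For (1), the hypothesis $\pi_i(Y,X)=0$ for $i=1,\dots,k$ says that the pair $(Y,X)$ is $k$-connected, so the relative Hurewicz theorem directly yields $H_i(Y,X)=0$ for $i=1,\dots,k$. Reading this into the homology long exact sequence of the pair shows in addition that $i_\ast\colon H_i(X)\to H_i(Y)$ is an isomorphism for $i<k$ and a surjection for $i=k$; both facts are needed in part (2).

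For (2), let $V\cong X\times D^k$ be the (trivial) tubular neighborhood of $X$ and $W=Y\setminus\mathrm{int}\,V$, so that $W\simeq Y\setminus X$ and $\partial V\cong X\times S^{k-1}$. By excision and the K\"unneth formula for pairs, $H_\ast(Y,W)\cong H_\ast(V,\partial V)\cong H_{\ast-k}(X)$. Plugging this into the long exact sequence of $(Y,W)$, together with $H_{k-1}(Y)\cong H_{k-1}(X)$ from part (1), gives
\[ H_k(Y)\xrightarrow{\delta} H_0(X)\to H_{k-1}(W)\to H_{k-1}(X)\to 0. \]
The crucial step is to show $\delta=0$: by the surjectivity from (1), every class in $H_k(Y)$ is represented by a $k$-cycle $\sigma$ supported in $X$, and triviality of the normal bundle lets us form $\sigma\times[0,v]\subset V$, whose boundary exhibits $\sigma$ as homologous in $Y$ to the pushed-off cycle $\sigma\times\{v\}\subset\partial V\subset W$. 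Hence every class in $H_k(Y)$ lies in the image of $H_k(W)\to H_k(Y)$, so $\delta$ vanishes by exactness.

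What remains is to split the short exact sequence $0\to\mathbb{Z}\to H_{k-1}(W)\to H_{k-1}(X)\to 0$. A section is provided by $s\colon X\hookrightarrow X\times\{v\}\subset\partial V\subset W$: composing $s_\ast$ with the quotient map $H_{k-1}(W)\to H_{k-1}(X)$ recovers the identity, because $s$ followed by $W\hookrightarrow Y$ is homotopic to the original inclusion $X\hookrightarrow Y$ via the radial path in $V$. The resulting isomorphism $H_{k-1}(Y\setminus X)\cong\mathbb{Z}\oplus H_{k-1}(X)$ matches the K\"unneth decomposition of $H_{k-1}(X\times S^{k-1})$, and tracking generators shows the whole identification is induced by the inclusion $\partial V\hookrightarrow W$. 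The main obstacle is the vanishing of $\delta$, which is precisely where triviality of the normal bundle is indispensable — without it, the Euler-class contribution would in general produce a nontrivial intersection pairing of $X$ with $k$-cycles supported in $X$, and the argument would fail.
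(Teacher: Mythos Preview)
Your argument is correct, but it takes a noticeably longer route than the paper's. The paper applies excision to the pair $(Y\setminus V,\partial V)$ rather than to $(Y,W)$: from $H_i(Y,X)=H_i(Y,V)=H_i(Y\setminus V,\partial V)=0$ for $i\le k$, the long exact sequence of $(Y\setminus V,\partial V)$ immediately gives that the inclusion $\partial V\hookrightarrow Y\setminus V$ induces an \emph{isomorphism} on $H_{k-1}$, and then K\"unneth on $\partial V\cong S^{k-1}\times X$ finishes. In other words, the paper obtains in one stroke the fact you only recover at the very end (``the whole identification is induced by the inclusion $\partial V\hookrightarrow W$''), without ever needing to analyze a connecting map $\delta$, construct a push-off, or split a short exact sequence by hand. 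Your approach does have the minor virtue of making explicit the role of the push-off section $X\hookrightarrow\partial V$, which is implicitly what underlies the K\"unneth splitting anyway; but the paper's choice of pair makes the proof essentially a two-line affair and shows more transparently that triviality of the normal bundle enters only through the product decomposition $\partial V\cong S^{k-1}\times X$.
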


	\begin{proof}
		(1) follows directly from the Hurewicz's Theorem. Since $X$ has trivial normal bundle in $Y$, the small tubular neighbourhood of $X$ in $Y$ is diffeomorphic to $V=X\times D^k$. Then by using the excision lemma we have that
		\begin{align*}
			H_i(Y,X) = H_i(Y,V) = H_i(Y-V,\partial V) = 0, i=1,2,\dots,k
		\end{align*}
		Thus we conclude 
		\begin{align}\label{1}
			H_{k-1}(\partial V)\longrightarrow H_{k-1}(Y-V)
		\end{align}
		is an isomorphism. (2) then follows from the fact that $X$ is a deformation retract of $V$ and the  K{\"{u}}nneth formula.		
	\end{proof}
	
	As a result of Lemma \ref{lem: intersection}, we can define the homomorphisms
	\begin{align*}
		&\zeta: H_{k-1}(Y\backslash X)\longrightarrow \mathbb{Z}\\
		&\eta: H_{k-1}(Y\backslash X)\longrightarrow H_{k-1}(X)
	\end{align*}
	by composing the \eqref{eq: topo isomorphism 1} and the projection to the $\mathbb{Z}$ summand and the projection to $H_{k-1}(X)$ respectively. 
 
        The next Lemma investigates the relationship between winding number and $\zeta$-image.
        \begin{lemma}\label{lem: intersection 1}
            Let $X,Y$ be as in Lemma \ref{lem: intersection}. Let $\Gamma^{k-1}$ be an oriented submanifold in $Y\backslash X$ which is the boundary of a $k$-dimensional oriented submanifold in $Y$. Then the winding number of $\Gamma$ and $X$ is well defined, and equals $\zeta([\Gamma])$
        \end{lemma}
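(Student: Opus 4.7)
The plan is to prove well-definedness and the identification with $\zeta([\Gamma])$ in a single computation, by writing $[\Gamma] \in H_{k-1}(Y\setminus X)$ explicitly as a linear combination of fiber-sphere classes indexed by the intersection points of \emph{any} bounding chain with $X$.

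First I would take a compact oriented $k$-chain $W \subset Y$ with $\partial W = \Gamma$ and perturb $W$ rel boundary so that it meets $X$ transversally. Then $W\cap X$ consists of finitely many signed points $\{(p_i,\epsilon_i)\}$ with $\epsilon_i\in\{\pm 1\}$, and by definition the candidate winding number is $W\cdot X = \sum_i \epsilon_i$. Choose the trivialized tubular neighborhood $V_\varepsilon\cong X\times D^k_\varepsilon$ small enough that $\Gamma \cap V_\varepsilon = \emptyset$ and each connected component of $W\cap V_\varepsilon$ is a small smooth disk $D_i$ that is isotopic rel boundary, inside $V_\varepsilon$, to the normal fiber disk $\{p_i\}\times D^k_\varepsilon$, where the sign $\epsilon_i$ records the orientation comparison with the product orientation.

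Next I would set $W' = W\setminus\mathrm{int}(W\cap V_\varepsilon)$, which is a $k$-chain supported in $Y\setminus X$. The boundary computation gives
\begin{equation*}
\partial W' \;=\; \Gamma \;-\; \sum_i \epsilon_i\, [S_i] \quad \text{in } C_{k-1}(Y\setminus X),
\end{equation*}
where each $S_i = \{p_i\}\times S^{k-1}_\varepsilon \subset \partial V_\varepsilon$ carries its canonical fiber orientation. Since $W'$ lies in $Y\setminus X$, passing to homology gives $[\Gamma] = \sum_i \epsilon_i [S_i]$ in $H_{k-1}(Y\setminus X)$. Under the isomorphism $H_{k-1}(Y\setminus X) \cong H_{k-1}(\partial V) \cong \mathbb{Z}\oplus H_{k-1}(X)$ from Lemma \ref{lem: intersection} and the Künneth formula, each canonically oriented fiber sphere $[S_i]$ is precisely the generator $(1,0)$ of the $\mathbb{Z}$-summand, so applying $\zeta$ yields $\zeta([\Gamma]) = \sum_i \epsilon_i = W\cdot X$. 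Because the left-hand side depends only on the homology class of $\Gamma$, this simultaneously shows that $W\cdot X$ is independent of the choice of $W$ (hence the winding number is well defined) and identifies it with $\zeta([\Gamma])$.

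The only delicate point — and the main thing I would check carefully — is the sign compatibility: verifying that the local intersection sign $\epsilon_i$ determined by the orientations of $T_{p_i}W\oplus T_{p_i}X$ inside $T_{p_i}Y$ agrees with the coefficient of $[S_i]$ in $\partial W'$, which is determined by the boundary-of-a-disk convention combined with the trivialization $V_\varepsilon = X\times D^k_\varepsilon$. This reduces to a single local coordinate check in the product trivialization and follows from the standard convention $\partial D^k = S^{k-1}$. No deeper obstacle arises, since the topological input — the splitting $H_{k-1}(Y\setminus X)\cong \mathbb{Z}\oplus H_{k-1}(X)$ and the identification of its $\mathbb{Z}$-generator with a fiber sphere — is exactly what Lemma \ref{lem: intersection} already supplies.
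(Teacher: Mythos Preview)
Your argument is correct and follows the same overall architecture as the paper's proof: both cut the bounding submanifold along the boundary of the tubular neighbourhood $V$ and use the outer piece to show that $[\Gamma]$ agrees in $H_{k-1}(Y\setminus X)$ with the class of the curve $\Sigma\cap\partial V$ on $\partial V$, then identify the $\mathbb{Z}$-component of that class with the intersection number.

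The difference lies in how the last identification is carried out. You work pointwise: after shrinking $V_\varepsilon$, each component of $W\cap V_\varepsilon$ is a signed normal fibre disk, so the new boundary is literally a signed sum of fibre spheres $\{p_i\}\times S^{k-1}$, and you read off $\zeta$ directly from the K\"unneth splitting. The paper instead keeps the whole inner piece $\Sigma_0\subset V$ intact and computes $[\Sigma_0]\cdot[X]$ algebraically via Lemma~\ref{lem: counting} (Poincar\'e--Lefschetz duality on $V$), using that $D_V(i_*[X]) = \pi^*\alpha_D$ and the relation $\delta\alpha_S=\alpha_D$ to pass from the cap product to the evaluation $\alpha_S(\pi_*[\Gamma']) = \zeta([\Gamma'])$. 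Your route is more elementary and self-contained, avoiding the duality lemma entirely; the paper's route has the mild advantage that it does not require the local isotopy statement (that components of $W\cap V_\varepsilon$ are fibre disks for small $\varepsilon$), trading that geometric step for a purely homological computation inside $V$.
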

        \begin{proof}
            Let $\Gamma = \partial\Sigma$. Recall in usual sense, we always define the winding number of $\Gamma$ and $X$ as the oriented intersection number of $\Sigma$ and $X$. Without of loss of generality we assume $\Sigma$ and $\partial V$ intersect transversally. Denote $\Gamma' = \Sigma\cap\partial V$. Let $\Sigma_0$ be the portion of $\Sigma$ bounded by $\Gamma'$ in $V$. We have $[\Gamma]=[\Gamma']$ in $H_{k-1}(Y\backslash X)$ since $\partial(\Sigma\backslash\Sigma_0) = \Gamma-\Gamma'$. 
            
            Let $\alpha_S\in H^{k-1}(S^{k-1})$ and $\alpha_D\in H^k(\mathbb{D}^k,S^{k-1})$ be the fundamental class for cohomology group, it is clear that $\delta\alpha_S = \alpha_D$. Let $\pi: (V,\partial V) = (\mathbb{D}^k\times X,S^{k-1}\times X)\longrightarrow (\mathbb{D}^k,S^{k-1})$ be the projection. It is not hard to see $\pi^*\alpha_D\in H^k(V,\partial V)$ is the Poincare dual of $i_*[X]\in H_n(V)$. This enables us to compute using Lemma \ref{lem: counting}:
            \begin{align*}
                &[\Sigma]\cdot i_*[X] = [\Sigma_0]\cdot i_*[X] = D_V(i_*[X])([\Sigma_0]) = \pi^*\alpha_D([\Sigma_0])\\
                = &\delta \pi^*\alpha_S([\Sigma_0]) = \pi^*\delta\alpha_S([\Sigma_0])
                = \pi^*\alpha_S(\partial[\Sigma_0]) = \pi^*\alpha_S([\Gamma'])\\
                = &\alpha_S([\pi_*\Gamma']) = \zeta([\Gamma']) = \zeta([\Gamma])
            \end{align*}
            Here $\partial$ denotes the boundary homomorphism for relative homology. Since the result is independent of the choice of $\Sigma$, the winding number is well defined, and this completes the proof of the lemma.
        \end{proof}

        \begin{remark}\label{rem: intersection noncompact}
            If $X$ is noncompact and other conditions are the same, then the conclusion of Lemma \ref{lem: intersection 1} still holds true by taking large region $\Omega$ on $X$ and compute by using relative homology $H_*(\Omega,\partial\Omega)$.
        \end{remark}

        \begin{lemma}\label{lem: intersection 2}
            For an oriented closed manifold $\Sigma^k$, the oriented intersection number of $\Sigma^k$ and $X^n$ equals $0$
        \end{lemma}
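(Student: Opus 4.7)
The plan is to exploit the vanishing $H_k(Y,X)=0$ (which follows from Lemma~\ref{lem: intersection}(1) at $i=k$) together with the trivial normal bundle of $X$. The long exact sequence of the pair gives surjectivity of $i_*\colon H_k(X)\to H_k(Y)$, so $[\Sigma]$ admits a representative $\sigma$ as a $k$-cycle supported in $X$; say $i_*[\sigma]=[\Sigma]$.

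Next I would push $\sigma$ off $X$ inside the tubular neighbourhood $V\cong X\times D^k$. For a fixed $p\in D^k\setminus\{0\}$ the cycle $\sigma'=\sigma\times\{p\}$ lies in $V\setminus X$, and the cylinder $\sigma\times\gamma$ (where $\gamma\subset D^k$ is a path from $0$ to $p$) shows $[\sigma']=[\sigma\times\{0\}]=[\Sigma]$ in $H_k(Y)$. Since the oriented intersection number depends only on the homology class of $\Sigma$ (with $X$ fixed) and since $\sigma'\cap X=\emptyset$, we conclude $[\Sigma]\cdot[X]=[\sigma']\cdot[X]=0$.

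The only mildly subtle point, and the step most likely to need justification, is that by the opening paragraph of this section the ambient space has been replaced by its relative universal covering $\tilde Y$, which may be noncompact, so one has to check that the oriented intersection number is a well-defined integer in that setting. This is fine because $\Sigma$ is compact by hypothesis while the chosen lift of $X$ (the one corresponding to $\pi_1(X)\subset\pi_1(\tilde Y)$) is a homeomorphic copy of the compact $X$; after a transverse perturbation the intersection is a finite signed point set, and the homology-invariance argument sketched above (which is a local statement near a compact cobordism) goes through unchanged. If one prefers, the same conclusion can be extracted from Lemma~\ref{lem: counting} applied to a large compact codimension-$0$ submanifold of $\tilde Y$ whose interior contains $\Sigma\cup X$.
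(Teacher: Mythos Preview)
Your argument is correct and takes a genuinely different route from the paper. The paper proves this lemma by cutting $\Sigma$ along a small embedded $(k-1)$--sphere $\Gamma$ lying away from $X$, producing a small disk $\Sigma_1$ and its complement $\Sigma_2$; since $\Sigma_1$ is disjoint from $X$ one has $[\Sigma_1]\cdot[X]=0$, and then the well-definedness of the winding number established in Lemma~\ref{lem: intersection 1} forces $[\Sigma_2]\cdot[X]=0$ as well. Thus the paper leverages the $\zeta$--machinery it has just built. Your approach instead goes back to the raw input $H_k(Y,X)=0$, pulls $[\Sigma]$ into $X$ via surjectivity of $i_*$, and then pushes off along the trivial normal bundle; this is a clean, self-contained homological argument that does not rely on Lemma~\ref{lem: intersection 1}. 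The paper's route has the virtue of exercising the $\zeta$--formalism that is used repeatedly afterwards, while yours is shorter and closer to a textbook ``Thom class vanishes on the complement'' argument. Your handling of the noncompact ambient space is adequate: the intersection number can be read as the pairing of $[\Sigma]\in H_k(Y)$ with the image of the Thom class in $H^k(Y)$, which is manifestly homology-invariant and vanishes on cycles supported in $Y\setminus X$.
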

        \begin{proof}
            Pick a small sphere $\Gamma^{k-1}=S^{k-1}$ away from $X$, which divides $\Sigma$ into two parts, $\Sigma_1$ and $\Sigma_2$, where $\Sigma_1$ is a topological $\mathbb{D}^k$ away from $X$. By Lemma \ref{lem: intersection 1},
            \begin{align*}
                [\Sigma_1]\cdot[X]=[\Sigma_2]\cdot[X]=0
            \end{align*}
            This completes the proof.
        \end{proof}
        
 The following facts are easy to see, we collect them as lemmas for later use.
	
	\begin{lemma}\label{lem: topology set up}
		\quad
  
		(1) A $(k-1)$-chain $\sigma$ supported in $Y\backslash X$ is zero-homologous in $Y$ if and only if $\eta([\sigma]) = 0$.
		
		(2) A $(k-1)$-chain $\sigma$ supported in $Y\backslash X$ which is zero-homologous in $Y$ is zero-homologous in $Y\backslash X$ if and only if $\zeta([\sigma])=0$.
		
	\end{lemma}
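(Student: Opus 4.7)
The plan is to combine the splitting $H_{k-1}(Y\backslash X)\cong\mathbb{Z}\oplus H_{k-1}(X)$ from Lemma \ref{lem: intersection}(2) with the long exact sequence of the pair $(Y,X)$. Both assertions will be read off from a single computation: the kernel of the inclusion-induced map $\iota_*\colon H_{k-1}(Y\backslash X)\to H_{k-1}(Y)$.

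First I would trace how the two summands of $H_{k-1}(\partial V)=H_{k-1}(S^{k-1}\times X)$ land inside $H_{k-1}(Y)$ through the composition $\partial V\hookrightarrow Y\backslash X\hookrightarrow Y$. The $\mathbb{Z}$-summand, generated by a meridian sphere $S^{k-1}\times\{pt\}\subset\partial V$, bounds the meridian disk $D^k\times\{pt\}\subset V\subset Y$ and hence is killed by $\iota_*$. The $H_{k-1}(X)$-summand, represented by cycles of the form $\{x_0\}\times \Gamma$ for $\Gamma$ a cycle in $X$, is mapped by $\iota_*$ to the image of the inclusion-induced homomorphism $H_{k-1}(X)\to H_{k-1}(Y)$, since $\{x_0\}\times X\subset\partial V$ is a section of the trivial normal sphere bundle and in particular is homologous in $Y$ to $X$ itself.

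Next I would invoke the long exact sequence of the pair $(Y,X)$,
\begin{equation*}
H_k(Y,X)\longrightarrow H_{k-1}(X)\longrightarrow H_{k-1}(Y)\longrightarrow H_{k-1}(Y,X),
\end{equation*}
together with Lemma \ref{lem: intersection}(1) (which yields $H_j(Y,X)=0$ for $1\le j\le k$), to deduce that $H_{k-1}(X)\to H_{k-1}(Y)$ is an isomorphism. Combined with the previous identification, $\ker\iota_*$ is exactly the $\mathbb{Z}$-summand of the splitting, which gives (1): $[\sigma]$ is zero-homologous in $Y$ iff $\eta([\sigma])=0$. For (2), once $[\sigma]$ is zero in $H_{k-1}(Y)$, part (1) places $[\sigma]$ in the $\mathbb{Z}$-summand, so $[\sigma]=\zeta([\sigma])\cdot[S^{k-1}]$ in $H_{k-1}(Y\backslash X)$, and this vanishes iff $\zeta([\sigma])=0$.

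The two long exact sequence invocations are routine. The one step that needs careful bookkeeping is verifying that the two Künneth summands of $H_{k-1}(\partial V)$ really map into $H_{k-1}(Y)$ as claimed, i.e., that the meridian sphere bounds a disk inside $Y$ and that pushing a cycle of $X$ out to a section of $\partial V$ does not distort its image in $H_{k-1}(Y)$. Once this identification is set up, both (1) and (2) fall out by a direct diagram chase.
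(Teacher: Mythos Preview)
Your proposal is correct and is essentially a fleshed-out version of the paper's two-line proof, which reduces to cycles supported on $\partial V$ via the isomorphism of Lemma \ref{lem: intersection}(2) and then declares that the conclusion ``follows from definition''. The step you make explicit---that $H_{k-1}(X)\to H_{k-1}(Y)$ is an isomorphism via the long exact sequence of the pair and Lemma \ref{lem: intersection}(1)---is precisely what the paper leaves to the reader.
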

        \begin{proof}
            By excision Lemma, it suffice to deal with the case that $\sigma$ supports on $\partial V = S^{k-1}\times X$. Then the conclusion follows from definition.
        \end{proof}

        \begin{lemma}\label{lem: noncompact}
            If the image of $i_*: H_n(X)\longrightarrow H_n(Y)$ is infinite cyclic, then $Y$ must be noncompact.
        \end{lemma}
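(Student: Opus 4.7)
I would argue by contradiction: suppose $Y$ is compact. Since $i_*H_n(X)$ is infinite cyclic, and in particular nonzero, $X$ must be a closed orientable $n$-manifold with a fundamental class $[X]$, and $i_*[X]\in H_n(Y)$ has infinite order. Recall that throughout this section $Y$ is already taken to be oriented (as in Lemma~\ref{lem: intersection}), so a compact such $Y$ would be closed and oriented.

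Assuming $Y$ closed and oriented, Poincar\'e duality provides a nondegenerate intersection pairing
\[
H_n(Y;\mathbb{Q}) \times H_k(Y;\mathbb{Q}) \longrightarrow \mathbb{Q}.
\]
I would then show that $i_*[X]$ pairs trivially with every rational $k$-cycle. Given a closed $k$-cycle $\sigma$ in $Y$, made transverse to $\partial V$ where $V = X\times D^k$ is the trivialized tubular neighborhood of $X$, I decompose $\sigma = \sigma_V + \sigma_{\mathrm{out}}$ with $\sigma_V\subset V$ and $\sigma_{\mathrm{out}}\subset Y\setminus\mathrm{int}\, V$. Their common boundary $\Gamma' := \partial\sigma_V = -\partial\sigma_{\mathrm{out}}$ lies in $\partial V$ and bounds $\sigma_{\mathrm{out}}$ inside $Y\setminus X$, so $[\Gamma'] = 0$ in $H_{k-1}(Y\setminus X)$ and hence $\zeta([\Gamma']) = 0$. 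The chain-level analogue of Lemma~\ref{lem: intersection 1} then gives the oriented intersection number $\sigma\cdot X = \zeta([\Gamma']) = 0$. By nondegeneracy of the pairing, $i_*[X]$ must vanish in $H_n(Y;\mathbb{Q})$, i.e.\ it is torsion in $H_n(Y;\mathbb{Z})$, contradicting the infinite cyclic hypothesis.

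The main obstacle I foresee is justifying the chain-level extension of Lemma~\ref{lem: intersection 1}, which is stated there only for smooth submanifolds; fortunately the Poincar\'e--Lefschetz duality computation inside $V$ used in its proof (with $\pi^*\alpha_D$ representing the dual of $i_*[X]\in H_n(V)$) applies verbatim once $\sigma$ has been put in transverse position with respect to $\partial V$. Alternatively, one can bypass the issue by invoking Thom's representability theorem, which, in the dimension range $n+k\le 7$ relevant to the paper, represents every rational class in $H_k(Y;\mathbb{Q})$ as a rational multiple of a closed oriented smooth submanifold class, so that Lemma~\ref{lem: intersection 2} applies directly to yield the intersection vanishing.
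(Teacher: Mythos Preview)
Your proposal is correct and follows essentially the same route as the paper. The paper's two-line proof simply says: if $Y$ were compact, Poincar\'e duality would yield a closed oriented $k$-submanifold $Z$ with $[Z]\cdot[X]\neq 0$, contradicting Lemma~\ref{lem: intersection 2}; your alternative via Thom representability is exactly this argument spelled out, and your main chain-level variant is just a direct re-derivation of Lemma~\ref{lem: intersection 2} for general cycles rather than submanifolds.
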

        \begin{proof}
            If not, then one can always find a $k$-dimensional oriented submanifold $Z$ with non-zero intersection number with $X$. This is a contradiction with Lemma \ref{lem: intersection 2}.
        \end{proof}

        We remark that in order to guarantee the noncompactness of $Y$, the condition in Lemma \ref{lem: noncompact} could not be removed. In fact, $(S^5,S^3)$ is a weakly relative aspherical pair with isomorphic fundamental group. However, $S^5$ is compact.

	\section{Spherical width and proof of Theorem \ref{thm1}}
	In this section, we focus on the proof of Theorem \ref{thm1}. We first discuss in subsection 4.1 the definition of width of certain homology class, and obtain a lower bound estimate when the homology class runs to infinity by quantitative topology. Next in subsection 4.2, under PSC assumption, we shall establish an upper bound estimate for spherical width in enlargeable setting. These two aspects would finally lead to the desired contradiction.
	\subsection{Width of homology class and its estimate at infinity}
	We first carry out the definiton in the most general setting:
	\begin{definition}\label{Defn: homological width}
		Let $M$ be a Riemannian manifold and $U$ an open subset of $M$. Let $a$ be a homology class in $M$. Define the homological width of $a$ respect to $U$ to be
		\begin{align*}
			W_U(a) = \inf \lbrace\diam\sigma: \sigma \mbox{ is a chain in } M,[\sigma] = a, \supp(\sigma)\subset U\rbrace
		\end{align*}
	\end{definition}
	If for any exhaustion $K_1\subset K_2\subset\dots$ of $M$, $\liminf_{i\to +\infty}{W_{M\backslash K_i}(a)} =\infty$, then we say $a$ has infinite width at infinity. One could get a feeling in the following example:
	\begin{example}
		(1) Consider $(\mathbb{R}^n,g_{Euc})$ with $p\in \mathbb{R}^n$. Let $U = \mathbb{R}^n\backslash p$. Then the generator of $H_{n-1}(\mathbb{R}^n\backslash p)\cong\mathbb{Z}$ has infinite width at infinity.
		
		(2) Consider a metric $g$ on $\mathbb{R}^n$, which is isometric to $(S^{n-1}\times [1,\infty),dt^2+g_{S^{n-1}})$ outside a compact ball. Let $U = \mathbb{R}^n\backslash p$. Then the width of the generator of $H_{n-1}(\mathbb{R}^n\backslash p)\cong\mathbb{Z}$ equals $\diam S^{n-1} = \pi$.
	\end{example}
	
	At the first step, we shall handle in the most general setting, {\it i.e.} the {\it weakly relative aspherical condition}. Suppose $Y^{n+k}$ is weakly relative aspherical to  $X^n$. Fix a metric $g_Y$ on $Y$ and let $X$ inherit the induced metric from $Y$. Let $q:\tilde{Y}\longrightarrow Y$ be the Riemannian covering of $Y$ corresponding to the fundamental group of $\pi_1(X)$. This enables us to lift $X$ to a submanifold $X_0$ of $\tilde{Y}$. Since $Y$ is weakly aspherical relative to $X$, we have that
	\begin{align*}
		&\pi_i(X)\longrightarrow\pi_i(Y) \mbox{ is an isomorphism for } i = 2,3,\dots,k-1\\
            &\pi_k(X)\longrightarrow\pi_k(Y) \mbox{ is surjective }
	\end{align*}
	Hence we get
		\begin{equation}\label{eq: pi-isomorphism}
		    \begin{split}
		        &\pi_i(X_0)\longrightarrow\pi_i(\tilde{Y}) \mbox{ is an isomorphism for } i = 1,2,3,\dots,k-1\\
                    &\pi_k(X_0)\longrightarrow\pi_k(\tilde{Y}) \mbox{ is surjective }
		    \end{split}
		\end{equation}

	This shows $\pi_i(\Tilde{Y},X_0) = 0$ for $i\le k$. By the Hurewicz's Theorem, $H_i(\Tilde{Y},X_0) = 0$ for $i\le k$. Therefore
	\begin{align}\label{eq: H-isomorphism}
		H_i(X_0)\longrightarrow H_i(\Tilde{Y}) \mbox{ is an isomorphism for } i = 1,2,3,\dots,k-1
	\end{align}
	
	The pullback of $X$ under the covering map $q$ is the union of copies of $X$:
	\begin{align*}
		&q^{-1}(X) = \bigcup_{\alpha\in G} X_{\alpha}.\\
		&G = \pi_1(Y)/\pi_1(X)
	\end{align*}
	
	\begin{lemma}\label{lem: L_1}
		There is a universal constant $L_1$ relying only on $(Y,g_Y)$, such that for any $y\in \Tilde{Y}$, there exists $\alpha\in G$, satisfying $\dist(y,X_{\alpha})\le L_1$.
	\end{lemma}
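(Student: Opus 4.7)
The plan is to exploit the compactness of $Y$ together with the path lifting property of the covering map $q:\Tilde{Y}\longrightarrow Y$. Since $X$ is a nonempty closed subset of the compact manifold $Y$, the continuous function $p\mapsto \dist_Y(p,X)$ attains its maximum on $Y$; call this maximum $L_1$, which depends only on $(Y,g_Y)$. In particular, $L_1\le\diam(Y)$.

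Next, I would pick an arbitrary point $y\in \Tilde{Y}$ and consider its image $p=q(y)\in Y$. By the choice of $L_1$, there exists $x\in X$ with $d_Y(p,x)\le L_1$. For any $\epsilon>0$, choose a piecewise smooth curve $\gamma:[0,1]\longrightarrow Y$ from $p$ to $x$ with length at most $L_1+\epsilon$. Since $q$ is a covering projection, I can lift $\gamma$ uniquely to a curve $\Tilde{\gamma}:[0,1]\longrightarrow\Tilde{Y}$ starting at $y$, and let $\Tilde{x}=\Tilde{\gamma}(1)$. Then $\Tilde{x}\in q^{-1}(X)=\bigcup_{\alpha\in G}X_{\alpha}$, so $\Tilde{x}$ lies in some copy $X_{\alpha}$.

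Because $q$ is a local isometry with respect to $\Tilde{g}$ and $g_Y$, the lift $\Tilde{\gamma}$ has the same length as $\gamma$, so
\[
\dist(y,X_{\alpha})\le d_{\Tilde{Y}}(y,\Tilde{x})\le \mathrm{length}(\Tilde{\gamma})\le L_1+\epsilon.
\]
Letting $\epsilon\to 0$ (or equivalently, applying the Hopf--Rinow theorem to the compact, hence complete, manifold $Y$ to obtain a minimizing geodesic of length exactly $d_Y(p,x)\le L_1$ and lifting it) yields $\dist(y,X_{\alpha})\le L_1$, which is the desired bound.

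There is no serious obstacle here; the argument is a direct application of the path lifting property of Riemannian coverings together with compactness of the base. The constant $L_1$ depends only on the geometry of $(Y,g_Y)$ and not on $y$ or on the choice of lift $X_0$, which is exactly what is needed for the subsequent uses of the lemma in the width estimates.
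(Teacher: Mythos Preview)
Your proof is correct and follows essentially the same approach as the paper: project to the compact base $Y$, find a short path to a point of $X$, and lift via the covering map using that $q$ is a local isometry. Your choice of constant $L_1=\max_{p\in Y}\dist_Y(p,X)$ is in fact sharper than the paper's $L_1=\diam(Y)$, and your treatment of the $\epsilon$-approximation is more careful, but the argument is the same.
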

	\begin{proof}
		Since $Y$ is compact, we can pick a point $x\in X$ such that $\dist(x,q(y))< \diam(Y)+1$. Let $\gamma$ be the path in $\Tilde{Y}$ connecting $x$ and $q(y)$. Lift $\gamma$ to a path $\Tilde{\gamma}$ in $\Tilde{Y}$ with endpoints $x'$ and $y$. Then $x$ lies in $X_{\alpha}$ for some $\alpha$. We have $\dist(y,X_{\alpha})\le \diam(Y)$, and this completes the proof of Lemma \ref{lem: L_1}.
	\end{proof}
	
	By \eqref{eq: pi-isomorphism}, $(\Tilde{Y},X_0)$ clearly satisfies the assumption of Lemma \ref{lem: intersection}, that is to say, we have the isomorphism:
	\begin{align}\label{eq: topo isomorphism}
		H_{k-1}(\Tilde{Y}\backslash X) \cong \mathbb{Z}\oplus H_{k-1}(X)
	\end{align}
	Also, we have the maps
	\begin{align*}
		&\zeta: H_{k-1}(\Tilde{Y}\backslash X_0)\longrightarrow \mathbb{Z}\\
		&\eta: H_{k-1}(\Tilde{Y}\backslash X_0)\longrightarrow H_{k-1}(X)
	\end{align*}

	We could now prove the proposition on lower bound estimate for certain homology class in $\Tilde{Y}\backslash X_0$:
	\begin{theorem}\label{thm: sphere width estimate}
		There exists a function $f:(0,+\infty)\longrightarrow (0,+\infty)$, $\lim_{r\to+\infty}f(r)=+\infty$, satisfying the following property: If $a\in H_{k-1}(\Tilde{Y}\backslash X_0)$ satisfies $\zeta(a)\ne 0$, with $a$ supported in $\Tilde{Y}\backslash B_r(X_0)$, then $W_{\Tilde{Y}\backslash B_r(X_0)}(a)>f(r)$.
	\end{theorem}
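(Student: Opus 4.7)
The plan is to argue by contradiction: suppose $f$ does not exist. Then for some $D_0 > 0$ there is a sequence $r_j \to \infty$ and cycles $\sigma_j$ of diameter at most $D_0$ supported in $\tilde Y \setminus B_{r_j}(X_0)$ with $\zeta([\sigma_j]) \ne 0$. I will show that for $r_j$ large enough in terms of $D_0$, any such cycle is forced to have $\zeta = 0$, yielding the contradiction.

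By Lemma \ref{lem: L_1}, each $\sigma_j$ lies within $L_1$ of some lift $X_{\alpha_j}$ of $X$, with $\alpha_j \ne 0$ once $r_j > D_0 + L_1$. The key step is to construct a ``pushoff'' cycle $\sigma'_j \subset \bigcup_{\alpha \ne 0} X_\alpha$ together with a chain $\beta_j$ in $\tilde Y$ of diameter $\le C(D_0)$ satisfying $\partial \beta_j = \sigma_j - \sigma'_j$, where $C(D_0)$ depends only on $D_0$ and universal constants ($L_1$, the tubular injectivity radius $\rho_0$ of $X \subset Y$, and $\diam Y$). When $D_0 + L_1 \le \rho_0$, the cycle $\sigma_j$ fits inside the lifted tubular neighborhood $N_{\alpha_j} \cong X_{\alpha_j} \times \mathbb{D}^k$, and $\sigma'_j$ is the normal retraction of $\sigma_j$ onto $X_{\alpha_j}$ while $\beta_j$ is the trace of the retraction. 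For general $D_0$, I would cover $\sigma_j$ by finitely many balls of radius below $\rho_0 - L_1$, perform the local pushoff on each piece, and glue the pieces together using Lemma \ref{lem: HZ23} (the bounded filling lemma) to patch up the boundary mismatches. Once $\beta_j$ is constructed, whenever $r_j > C(D_0) + D_0$ the chain $\beta_j$ avoids $X_0$, so $[\sigma_j] = [\sigma'_j]$ in $H_{k-1}(\tilde Y \setminus X_0)$, and in particular $\zeta([\sigma_j]) = \zeta([\sigma'_j])$.

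Next, $\zeta([\sigma'_j]) = 0$. The $\mathbb{Z}$-summand in the decomposition $H_{k-1}(\tilde Y \setminus X_0) \cong H_{k-1}(\partial V) = \mathbb{Z} \oplus H_{k-1}(X)$ is dual to a $(k-1)$-cohomology class represented by a closed form $\omega$ supported in an arbitrarily small tubular neighborhood $V$ of $X_0$: namely the pullback of the angular form on $S^{k-1}$ under the tubular projection, cut off by zero outside $V$. For $V$ disjoint from all $X_\alpha$ with $\alpha \ne 0$, $\sigma'_j$ lies outside $\supp(\omega)$, hence $\zeta([\sigma'_j]) = \int_{\sigma'_j} \omega = 0$. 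This forces $\zeta([\sigma_j]) = 0$, the desired contradiction. Explicitly, the lower bound $f(r)$ can be taken to be (essentially) the inverse of the increasing function $D \mapsto D + C(D)$, which tends to $+\infty$ as $r \to \infty$.

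The main technical obstacle is the controlled pushoff when $D_0$ exceeds $\rho_0 - L_1$: then $\sigma_j$ spans more than one tubular neighborhood, and one must glue local retractions while keeping the total diameter of $\beta_j$ bounded in terms of $D_0$ alone. The filling lemma (Lemma \ref{lem: HZ23}) is the crucial tool enabling this combinatorial patching; once the pushoff is in place, the vanishing of $\zeta$ on cycles lying in the $X_\alpha$'s and the contradiction follow quickly.
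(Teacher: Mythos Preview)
Your overall strategy matches the paper's: argue by contradiction, use Lemma~\ref{lem: L_1} to locate a nearby lift $X_\alpha$, produce a bounded-diameter $k$-chain linking $\sigma_j$ to a cycle with $\zeta=0$, and derive a contradiction once that chain is forced to miss (or, dually, to meet) $X_0$. Your form argument for $\zeta=0$ on cycles supported in $X_\alpha$ is also essentially what underlies the paper's implicit claim $\zeta([c_i])=0$.

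The substantive difference is in how the connecting chain is built, and here your proposal has a gap. The geometric pushoff via normal retraction is clean only when $D_0 + L_1 < \rho_0$. For general $D_0$ your patching scheme---cover $\sigma_j$ by small balls, retract each piece to its own $X_{\alpha_l}$, glue with Lemma~\ref{lem: HZ23}---is not obviously workable: adjacent pieces may retract to \emph{different} lifts, so the prism traces acquire boundary mismatch terms that are neither cycles nor obviously boundaries of small diameter, and the glued object $\sigma'_j$ need not even be a cycle. Making this precise would require real work.

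The paper sidesteps this construction entirely. Rather than retract $\sigma_j$ geometrically, it uses the isomorphism $H_{k-1}(X_0)\cong H_{k-1}(\tilde Y)$ from~\eqref{eq: H-isomorphism} to simply \emph{choose} a cycle $c_i\subset X_\alpha$ representing $\theta_i:=\eta([\sigma_j])$; its diameter is at most $\diam X$ automatically. Then $\eta([\sigma_j-c_i])=0$, so by Lemma~\ref{lem: topology set up} the chain $\sigma_j-c_i$ is a boundary in $\tilde Y$, and a \emph{single} application of Lemma~\ref{lem: HZ23} produces a filling $\tau_i$ with $\diam\tau_i\le C_1(D_0,g_Y)$---no patching needed. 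Since $\zeta([\sigma_j-c_i])=\zeta([\sigma_j])\ne 0$, this class is nonzero in $H_{k-1}(\tilde Y\setminus X_0)$, so $\tau_i$ must meet $X_0$, contradicting $\dist(\sigma_j-c_i,X_0)\ge r_j - L_1 - \diam X\to\infty$. The moral: replace your geometric retraction by an abstract homological choice of $c_i$, and the bounded filling comes for free.
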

	\begin{proof}

		Assume the theorem is not true, then there is a constant $C$ and $R_i\to\infty$ and $k-1$-chain $a_i$ in $\Tilde{Y}\backslash X_0$ with $\zeta([a_i])\ne 0$, such that
		\begin{equation}\label{eq: 112}
			\begin{split}
				\supp(a_i)\cap B_{R_i}(X_0) = \emptyset\\
				\diam(a_i)<C_0
			\end{split}
		\end{equation}
		By \eqref{eq: topo isomorphism} we assume $[a_i]=\beta_i+\theta_i$, with $\beta_i\in \mathbb{Z}$ and $\theta_i\in H_{k-1}(X)$. It is clear that $\zeta([a_i])\ne 0$ is equivalent to say $\beta_i\ne 0$.
		
		By Lemma \ref{lem: L_1} there is a copy $X_{\alpha}$ of $X$ such that $\dist(X_\alpha, a_i)<L_1$. By \eqref{eq: H-isomorphism}, we can find a chain $c_i$ supported in $X_{\alpha}$ representing the class $\theta_i$. This implies $\eta([a_i-c_i]) = 0$. Therefore the chain $a_i-c_i$ is homologous to zero in $\Tilde{Y}$. We have the diameter estimate
		\begin{align*}
			\diam(a_i-c_i)\le C_0+L_1+\diam(X)
		\end{align*}
		By Lemma \ref{lem: HZ23}, there is a $k$-chain $\tau_i$, such that $\partial\tau_i = a_i-c_i$, and
		\begin{align*}
			\diam(\tau_i)\le C_1 = C(C_0,g_Y)
		\end{align*}
		Since $\zeta([a_i-c_i]) = \zeta(a)\ne 0$, we have $[a_i-c_i]\ne 0\in H_{k-1}(\tilde{Y}\backslash X_0)$ by Lemma \ref{lem: topology set up}. Therefore, the intersection of the support of $\tau_i$ and $X_0$ is nonempty. This shows:
		\begin{align*}
			\dist(a_i-c_i,X_0)\le C_1
		\end{align*}
		On the other hand, by \eqref{eq: 112}, we have the distance estimate
		\begin{align*}
			\dist(a_i-c_i,X_0)\ge R_i-L_1-\diam(X)
		\end{align*}
		A contradiction is obtained by letting $R_i\to\infty$.
	\end{proof}
	
	\subsection{Reduction to sphere width estimate}
	In this subsection, contrary to the last subsection, we study how PSC condition gives upper bound estimate for our homological width. This estimate is closely related to a so called {\it dominated} $S^2$ {\it stability property} (Definition \ref{defn: S^2 stability}) for $X$. We would obtain this estimate for enlargeable manifold. Then we would assume fully relative aspherical condition, and prove a reduction theorem from Conjecture \ref{conj: RASC} to the {\it dominated} $S^2$ {\it stability property} in codimension 3 case. The results in Section 3 and Sec. 4.1 automatically holds since it is obvious that fully relative aspherical condition implies weakly relative aspherical condition.
	
	To illustrate our point clearer, we shall focus on the case of 2-dimensional homology class. The general case for homological width would sometimes be similar. By using the cube inequality Lemma \ref{lem: mu bubble reduction}, we're able to show the following:
	\begin{lemma}\label{lem: 2-sys estimate}
		Let $X^{n-2}$ be an enlargeable manifold and $Y^n$ a compact Riemannian manifold with $Sc_N^{\rtimes}(Y)\ge\sigma$ $(n+N\le 7)$. Assume there is a nonzero degree map $f:Y^n\longrightarrow S^2\times X^{n-2}$. Then there is an embedded 2-sphere $\Sigma$ in $Y^n$, with
		\begin{align}\label{eq: 2}
			\diam \Sigma\le \frac{C(n,N)}{\sqrt{\sigma}}
		\end{align}
		which also satisfies the property that the image of $[\Sigma]$ under the composition of following maps
		\begin{align}\label{eq: 3}
			H_2(\Sigma)\stackrel{f_*}{\longrightarrow}H_2(S^2\times X)=H_2(S^2)\oplus H_2(X)\longrightarrow H_2(S^2)
		\end{align}
		does not vanish. Here the last map means projection on the first summand.
	\end{lemma}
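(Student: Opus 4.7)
The strategy combines three ingredients: enlargeability of $X$ (Lemma \ref{lem: enlargeable}), the cubical $\mu$-bubble reduction (Lemma \ref{lem: mu bubble reduction}), and the classification of closed $\mathbb{T}^N$-stabilized-PSC surfaces (Lemma \ref{lem: slice and dice}(1)). Informally, enlargeability creates arbitrarily long cube directions transverse to the $S^2$-factor, $\mu$-bubble cube reduction cuts these off to produce a closed surface representing the ``$S^2$-fiber'' class, and the classification then forces each component to be a small round 2-sphere.

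Given $d>0$ (to be chosen large), Lemma \ref{lem: enlargeable} supplies a covering $\pi_X:\tilde X\to X$ with a cube-like region $V\subset\tilde X$ and a map $\varphi:V\to[-1,1]^{n-2}$ of non-zero degree $e$, with $\dist(\partial_{-i}V,\partial_{+i}V)>d$. Pulling the product covering $\mathrm{id}\times\pi_X:S^2\times\tilde X\to S^2\times X$ back along $f$ yields a covering $\pi_Y:\tilde Y\to Y$ with a lifted map $\tilde f:\tilde Y\to S^2\times\tilde X$ of some non-zero degree $D$ on the relevant component. Set $W:=\tilde f^{-1}(S^2\times V)\subset\tilde Y$ and $g:=\varphi\circ\mathrm{pr}_V\circ\tilde f:W\to[-1,1]^{n-2}$. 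Because $S^2$ is closed, $W$ has no side boundary and its full effective boundary is $g^{-1}(\partial[-1,1]^{n-2})$, divided into opposite faces $\partial_{\pm i}W=\tilde f^{-1}(S^2\times\partial_{\pm i}V)$. Since $\tilde f$ has a uniform Lipschitz constant $L<\infty$ inherited from $f$, the face distances in $W$ satisfy $\dist(\partial_{-i}W,\partial_{+i}W)\ge d/L$, which can be made as large as desired by taking $d$ large.

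Next I apply Lemma \ref{lem: mu bubble reduction} to $W$ with cube dimension $n-2$, fiber dimension $k=2$, and stabilization parameter $N$; the hypothesis $(n-2)+2+N=n+N\le 7$ is satisfied. Choose $d$ large enough that $C_{n-2,2,N}(d/L,\dots,d/L)\le \sigma/2$, which is possible since \eqref{eq: Cnkdi} scales like $1/d^2$. The point pullback $h=g_![t]\in H_2(W)$ at an interior regular value $t$ is non-trivial: the composition $q:=\mathrm{pr}_{S^2}\circ\tilde f:W\to S^2$ sends $[h]$, by a direct count of transverse preimages of a regular value $p\in S^2$, to $D\cdot e\cdot [S^2]\ne 0$. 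Lemma \ref{lem: mu bubble reduction} therefore returns a closed embedded surface $\Sigma\subset W$ with $[\Sigma]=h$ and
\begin{align*}
Sc_N^{\rtimes}(\Sigma)\;\ge\;\sigma-C_{n-2,2,N}\;\ge\;\sigma/2.
\end{align*}

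Each component of $\Sigma$ is a closed Riemannian $2$-manifold with $\mathbb{T}^N$-stabilized scalar curvature at least $\sigma/2$, so Lemma \ref{lem: slice and dice}(1) (after rescaling) forces each component to be a topological $2$-sphere of diameter at most $L_0\sqrt{2/\sigma}$, which yields \eqref{eq: 2}. Since $q_*[\Sigma]=D\cdot e\cdot [S^2]\ne 0$, the total signed $S^2$-degree of $\Sigma$ is non-zero, so at least one component $\Sigma_0$ has non-zero image under the composition in \eqref{eq: 3}. The main obstacle I anticipate is bookkeeping the final descent: $\Sigma_0$ naturally lives in the cover $\tilde Y$, and promoting it to a genuine embedded sphere in $Y$ requires either simply working in $\tilde Y$ for the downstream arguments (which is all that is used in the sequel) or verifying that $\pi_Y$ is injective on $\Sigma_0$, which follows once $\sigma$ is large enough that $\diam\Sigma_0$ is below the local injectivity scale of $\pi_Y$.
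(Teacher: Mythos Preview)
Your approach is essentially identical to the paper's: pull back along an enlargeable covering of $X$, apply Lemma~\ref{lem: mu bubble reduction} on the resulting cubical region $\tilde f^{-1}(S^2\times V)$, check non-triviality of $h$ via its $[S^2]$-component, and select a spherical component with non-zero $S^2$-projection. One notational slip: the reduction outputs $Sc_{n+N-2}^{\rtimes}(\Sigma)\ge\sigma/2$ rather than $Sc_N^{\rtimes}$, since each of the $n-2$ cube cuts adds a torus factor; this is harmless for the diameter bound (the paper invokes \cite{Gro20} directly, you invoke Lemma~\ref{lem: slice and dice}(1), same content). Your concern about descent is well-placed and honestly raised---the paper likewise just takes the image of the component under the covering map without justifying embeddedness in $Y$, implicitly relying on exactly your first resolution (only the homology class and diameter matter downstream); your second resolution via ``$\sigma$ large enough'' is not available, since $\sigma$ is prescribed.
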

	\begin{proof}
		By Lemma \ref{lem: enlargeable}, for any $d_0>0$ there exists a covering $\Tilde{X}$ of $X$ and a cube like region $V$ in $\Tilde{X}$, such that
		\begin{align*}
			\dist(\partial_{-i}V,\partial_{+i}V)>d_0, \mbox{ for } i = 1,2,\dots,n
		\end{align*}
		and a non-zero degree map
		\begin{align*}
			\varphi: V\longrightarrow [-1,1]^{n-2}
		\end{align*}
		Let $\Tilde{Y}$ be the pullback object in the following diagram
		\begin{equation*}
			\xymatrix{&S^2\times\Tilde{X}\ar[r]&S^2\times X\\
				&\Tilde{Y}\ar[r]\ar[u]^{\Tilde{f}}&Y\ar[u]_f}
		\end{equation*}
		Denote $\Omega_0 = S^2\times V$ and let $\Omega = \Tilde{f}^{-1}(\Omega_0)\subset \Tilde{Y}$. Then $\Omega$ is also a cube like region with
		\begin{align*}
			\dist(\partial_{-i}\Omega,\partial_{+i}\Omega)>d = \frac{d_0}{\Lip f}, \mbox{ for } i = 1,2,\dots,n
		\end{align*}
		
		Denote $h$ be the homology class in $H_2(\Omega)$ obtained by pulling back $0$ by $\varphi\circ q\circ \Tilde{f}$, where $q: S^2\times \tilde{X}\longrightarrow \tilde{X}$ is the projection map. Denote $\alpha = D_{\Omega_0}([S^2])\in H^n(\Omega_0,\partial\Omega_0)$ to be the Poincare dual of $[S^2]\in H_2(\Omega_0)$. Also denote $\beta = [S^2]^*\in H^2(\Omega_0)$ to be the canonical cohomology class with evaluation $1$ on $[S^2]$, which obviously exists since $[S^2]$ is free. 
		\begin{equation}\label{eq: 1}
			\begin{split}
				h &= \tilde{f}_!(q_!\varphi_!(0)) = \degg\varphi\cdot\tilde{f}_!(D_{\Omega_0}(\alpha))\\
				&= \degg\varphi\cdot D_{\Omega}(\tilde{f}^*(\alpha))
			\end{split}
		\end{equation}

		It is clear that $\Tilde{f}$ has non-zero degree restricted on $\Omega$, which shows
		\begin{align*}
			0\ne \Tilde{f}^*([\Omega_0]^*) = \Tilde{f}^*(\alpha\smallsmile\beta) =  \Tilde{f}^*(\alpha)\smallsmile\Tilde{f}^*(\beta)
		\end{align*}
		Combining with \eqref{eq: 1}, $h\ne 0$.
		
		Then we can apply Lemma \ref{lem: mu bubble reduction} to find a closed surface $\Sigma_{pre}$ in $\Omega$ representing the class $h$, with
		\begin{align*}
			Sc_{n+N-2}^{\rtimes}(\Sigma_{pre})\ge &\sigma-\frac{4(n+N-1)\pi^2}{n+N}\cdot\sum_{i=1}^{n-2}\frac{1}{d_i^2}\\
			>&\sigma - \frac{4(n+N-1)\pi^2}{n+N}\cdot \frac{n-2}{d^2}
		\end{align*}
		Choose $d_0 = (\Lip f)d$ to be sufficiently large, we have $Sc_{n+N-2}^{\rtimes}(\Sigma_{pre})>\frac{\sigma}{2}$. Then by \cite{Gro20} ( Page 2, Example 1 ) we get the desired diameter bound for each component of $\Sigma_{pre}$.
		
		Since $\degg\Tilde{f}\ne 0$, it's not hard to see
		\begin{align*}
			\tilde{f}_*(\Sigma_{pre}) = \tilde{f}_*(h) = \tilde{f}_*(\tilde{f}_!((\degg\varphi) [S^2])) = (\degg\tilde{f})(\degg\varphi) [S^2]
		\end{align*}
		For the last equality we have used
		\begin{align*}
			\tilde{f}_*(\tilde{f}_!(a)) &= \tilde{f}_*(D_{\Omega}(\tilde{f}^*D_{\Omega_0}a)) = \tilde{f}_*([\Omega]\smallfrown \tilde{f}^*D_{\Omega_0}a)\\
			&= \tilde{f}_*([\Omega])\smallfrown D_{\Omega_0}a = (\degg\tilde{f})a.
		\end{align*}
		It follows that there exists a component of $\Sigma_{pre}$, whose image under $\Tilde{f}_*$ has non-zero part over $[S^2]$. Then the image of this component under the covering map $\Tilde{Y}\longrightarrow Y$, denoted by $\Sigma$, has the desired property.

	\end{proof}
	\begin{remark}
		Different from 2-systole estimate \cite{Zhu20}, in which one needed only to guarantee the small sphere found to be homotopically nontrivial, in our case we have to carefully record the homological information of the small 2-sphere for later use.
	\end{remark}

	Now let us make the following definition:
	\begin{definition}\label{defn: S^2 stability}
		Let $X^{n-2}$ be a differential manifold. We say $X^{n-2}$ has dominated $S^2$-stability property, if for any Riemannian manifold $Y^n$ which admits a non-zero degree map $f: Y\longrightarrow S^2\times X^{n-2}$, satisfying $Sc_1^{\rtimes}(Y)\ge\sigma$, one could always find a embedded sphere $\Sigma$, satisfying \eqref{eq: 2}\eqref{eq: 3}. 
	\end{definition}
	
	It follows from Lemma \ref{lem: 2-sys estimate} that enlargeable manifold of dimension $n$ ($n+3\le 7$) has the dominated $S^2$-stability property. The proof of Theorem \ref{thm1} in codimension 3 then follows from the following reduction proposition:
	\begin{proposition}\label{prop: reduction}
		If $X^n$ has the dominated $S^2$-stability property, then Conjecture \ref{conj: RASC} holds true for $(Y^{n+3},X^n)$ provided $n+3\le 7$.
	\end{proposition}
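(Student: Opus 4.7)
Suppose for contradiction that $Y$ admits a PSC metric with $\mathrm{Sc}\ge\sigma>0$. The plan is to pass to the relative universal cover $q:\tilde Y\to Y$ corresponding to $\pi_1(X)\le\pi_1(Y)$, construct a closed $\mu$-bubble hypersurface $W^{n+2}\subset\tilde Y$ lying far from the lift $X_0$ of $X$, and use the dominated $S^2$-stability of $X$ to extract from $W$ a small embedded $2$-sphere of nontrivial $\zeta$-class, contradicting the sphere width estimate of Theorem~\ref{thm: sphere width estimate}. The lifted metric retains $\mathrm{Sc}\ge\sigma$; the full relative aspherical condition makes $X_0\hookrightarrow\tilde Y$ a weak homotopy equivalence, so Lemma~\ref{lem: noncompact} shows $\tilde Y$ is noncompact. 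A standard homotopy-pushout argument (using $H_*(\tilde Y,X_0)=0$, the trivial tubular neighborhood $V\cong X\times D^3$ of $X_0$, and excision) yields that $\partial V\hookrightarrow\tilde Y\setminus X_0$ is also a weak equivalence, providing a homotopy retraction $r:\tilde Y\setminus X_0\to\partial V\cong S^2\times X$.

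To construct $W$, fix a large $R_1>0$ (to be chosen at the end) together with a band-width $D>0$ satisfying $\frac{4(n+2)\pi^2}{(n+3)D^2}<\frac{\sigma}{2}$, and set $\Omega=V_{R_1+D}\setminus V_{R_1}$, where $V_R$ denotes a smooth approximation of the $R$-tubular neighborhood of $X_0$. Applying the equivariant separation Lemma~\ref{lem: equivariant seperation} to the compact Riemannian band $\Omega$ (which has width $\ge D$ by the triangle inequality) yields a smooth closed hypersurface $W^{n+2}\subset\Omega$ separating $\partial V_{R_1}$ from $\partial V_{R_1+D}$ and satisfying $\mathrm{Sc}_1^\rtimes(W)\ge\sigma/2$. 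Since $W$ and $\partial V_{R_1}$ cobound a subregion of $\Omega$, and $\partial V_{R_1}$ and $\partial V$ cobound $V_{R_1}\setminus V$ in $\tilde Y\setminus X_0$, one concludes $[W]=[\partial V]\in H_{n+2}(\tilde Y\setminus X_0)$. Under the isomorphism $r_*:H_{n+2}(\tilde Y\setminus X_0)\xrightarrow{\cong}H_{n+2}(\partial V)$ this reads $r_*[W]=[\partial V]$, so the map $r|_W:W\to\partial V$ has total degree $1$ and some component $W'\subset W$ admits a nonzero-degree map to $S^2\times X$.

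Applying the dominated $S^2$-stability of $X$ to $(W',r|_{W'})$ produces an embedded sphere $\Sigma\subset W'$ of diameter $\le C(n)/\sqrt{\sigma}$ whose image $(r|_{W'})_*[\Sigma]$ has nontrivial projection on the $H_2(S^2)$-summand of $H_2(S^2\times X)$. Under the identification $H_2(\tilde Y\setminus X_0)\cong H_2(\partial V)=H_2(S^2)\oplus H_2(X)$ induced by $r$, whose first factor is precisely $\zeta$, this translates to $\zeta([\Sigma])\ne 0$. Since $\Sigma\subset\Omega\subset\tilde Y\setminus B_{R_1}(X_0)$, Theorem~\ref{thm: sphere width estimate} yields $W_{\tilde Y\setminus B_{R_1}(X_0)}([\Sigma])>f(R_1)$, whereas the diameter bound on $\Sigma$ gives the reverse inequality $W_{\tilde Y\setminus B_{R_1}(X_0)}([\Sigma])\le C(n)/\sqrt{\sigma}$. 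Choosing $R_1$ so large that $f(R_1)>C(n)/\sqrt{\sigma}$ produces the contradiction.

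The main obstacle is the homological bookkeeping in the middle step: one must establish that $\partial V\hookrightarrow\tilde Y\setminus X_0$ is a weak equivalence (so that $r$ induces an isomorphism on $H_2$ transporting the $S^2$-class to $\zeta\ne 0$) and carefully track $[W]=[\partial V]$ through the two successive cobordisms, so that the map $r|_W$ is guaranteed to have degree $1$ and is thus amenable to the dominated $S^2$-stability hypothesis. The smoothing of $V_R$ so that $\Omega$ becomes a bona fide Riemannian band fitting the hypotheses of Lemma~\ref{lem: equivariant seperation} is a secondary but routine technicality.
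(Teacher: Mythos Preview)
Your overall strategy coincides with the paper's: pass to the cover $\tilde Y$, produce a $\mu$-bubble hypersurface far from $X_0$ carrying a degree-one map to $S^2\times X$, invoke dominated $S^2$-stability, and contradict Theorem~\ref{thm: sphere width estimate}. The only substantive difference is in manufacturing the map to $\partial V\cong S^2\times X$. The paper first takes a Whitehead retraction $\pi:\tilde Y\to X_0$ and then \emph{lifts} it along the $S^2$-bundle $p:\partial U_\epsilon\to X_0$ by obstruction theory, the obstructions lying in $H^{*}(\tilde Y\setminus U_\epsilon,\partial U_\epsilon;\pi_{*-1}(S^2))\cong H^{*}(\tilde Y,X_0;\pi_{*-1}(S^2))=0$. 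You instead assert that $\partial V\hookrightarrow\tilde Y\setminus X_0$ is itself a weak equivalence and take a Whitehead inverse $r$.

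That assertion is true, but your stated justification is the gap. ``$H_*(\tilde Y,X_0)=0$ plus excision'' shows only that the inclusion is an integral homology isomorphism; combined with the $\pi_1$-isomorphism (codimension $\ge 3$) this does \emph{not} force a weak equivalence --- for instance $S^1\hookrightarrow S^1\vee S^2\cup_{2-t}e^3$ is a $\pi_1$- and $H_*(-;\mathbb Z)$-isomorphism, yet the target has $\pi_2\cong\mathbb Z[\tfrac12]$. The fix is to pass to universal covers: since $\pi_1(X_0)\to\pi_1(\tilde Y)$ is an isomorphism, the preimages $\bar X_0\subset\bar Y$ are the universal covers, $\bar X_0\hookrightarrow\bar Y$ is a weak equivalence, $\partial\bar V=\bar X_0\times S^2$ and $\bar Y\setminus\bar X_0$ are simply connected (codimension $3$), and excision gives $H_*(\bar Y\setminus\bar V,\partial\bar V)\cong H_*(\bar Y,\bar X_0)=0$, so relative Hurewicz finishes. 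With this patched, the remainder of your argument (the cobordism $[W]=[\partial V]$, the degree-one restriction $r|_{W'}$, and the identification of the $H_2(S^2)$-component with $\zeta$) is correct and in fact more explicit than the paper's treatment.
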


	\begin{proof}
		Assume the conclusion is not true, by compactness there is a metric $g_Y$ on $Y$ such that $Sc(g_Y)>2$. We pass $Y$ to its covering $\Tilde{Y}$ as in the preceding section. Since $Y$ is fully relative aspherical to $X$, the inclusion of $X_0$ to $\Tilde{Y}$ induces isomorphism on homotopy groups in all dimensions. Hence, by the Whitehead's Theorem there is a deformation retraction map $\pi: \Tilde{Y}\longrightarrow X_0$.
		
		Denote $U_{\epsilon}\cong \mathbb{D}^3\times X_0$ to be the small tubular neighbourhood of $X_0$ in $\Tilde{Y}$, with $\partial U_{\epsilon} = S^2\times X_0$. Let $p: S^2\times X_0\longrightarrow X_0$ be the projection, we claim there is a map $\Tilde{\pi}: \Tilde{Y}\backslash U_{\epsilon}\longrightarrow \partial U_{\epsilon}$ such that the following diagram commute
		\begin{equation*}
			\xymatrix{\Tilde{Y}\backslash U_{\epsilon}\ar[r]^{\tilde{\pi}}\ar[rd]_{\pi}& \partial U_{\epsilon}\ar[d]^p\\
				&X_0}
		\end{equation*}
		This follows from the obstruction theory. In fact, all of the obstruction of the lifting lies in the homology group $H^{n+1}(\Tilde{Y}\backslash U_{\epsilon}, \partial U_{\epsilon},\pi_n(S^2))$, which equals to zero since by exision lemma we have
		\begin{align}\label{eq: no obstruction}
			H^{n+1}(\Tilde{Y}\backslash U_{\epsilon}, \partial U_{\epsilon},\pi_n(S^2)) = H^{n+1}(\Tilde{Y}, U_{\epsilon}, \pi_n(S^2))=
			H^{n+1}(\Tilde{Y}, X_0, \pi_n(S^2)) = 0
		\end{align}
		Such operation also guarantees $\Tilde{\pi}|_{\partial U_{\epsilon}}=id_{\partial U_{\epsilon}}$.
		
		In $\Tilde{Y}\backslash U_{\epsilon}$ we are able to construct a Riemannian band $\mathcal{V} = \lbrace x\in \Tilde{Y}\backslash U_{\epsilon}, R\le\dist(x,\partial U_{\epsilon})\le R+L$. Let $L$ be sufficiently large, then by the standard $\mu$-bubble argument there is a hypersurface $\Sigma$ seperating two ends of $\mathcal{V}$ such that
		\begin{align*}
			Sc_1^{\rtimes}(\Sigma)\ge Sc_Y-\frac{4(n+2)}{n+3}\frac{\pi^2}{L^2}\ge 2-\frac{4(n+2)}{n+3}\frac{\pi^2}{L^2}\ge 1
		\end{align*}
		
		Define $f = \Tilde{\pi}|_{\partial U_{\epsilon}}$, we have $\degg f = \degg \Tilde{\pi}|_{\partial U_{\epsilon}} = 1$. By our assumption (the case that $X$ is enlargeable follows from Lemma \ref{lem: 2-sys estimate}) there is a 2 sphere $\Gamma$ in $\Sigma$, such that the image of $[\Gamma]$ under the composition of the following maps does not vanish:
		\begin{align*}
			H_2(\Gamma)\stackrel{f_*}{\longrightarrow}H_2(S^2\times X)=H_2(S^2)\oplus H_2(X)\longrightarrow H_2(S^2)
		\end{align*}
		Also, we have the diameter bound
		\begin{align}\label{eq: 78}
			\diam(\Gamma)<C
		\end{align}
		By using the notation in the previous subsection, this is equivalent saying
		\begin{align}\label{eq: 79}
			\zeta([\Gamma])\ne 0
		\end{align}
		
		On the other hand, $R$ could be taken arbitrarily large when constructing $\mathcal{V}$. This shows $\Sigma$ can be sufficiently far away from $X_0$. An contradiction then follows from Theorem \ref{thm: sphere width estimate}, \eqref{eq: 78}, \eqref{eq: 79}. 
	\end{proof}
	
	\begin{proof}[Proof of Theorem \ref{thm1}]
		If $k=3$, then the conclusion follows from Proposition \ref{prop: reduction}. If $k=4$, by similar argument, for any $R>0$ there is a 3-dimensional embedded submanifold $\Gamma$ in $\Tilde{Y}$, such that
		\begin{equation}\label{eq: 85}
			\begin{split}
				&\dist(\Gamma,X_0)>R\\
				&Sc_{n+1}^{\rtimes}(\Gamma)\ge 1\\
				&\zeta([\Gamma])\ne 0
			\end{split}
		\end{equation}
		By the refined slice and dice Lemma \ref{lem: slice and dice}, there are disjoint collection of spheres $S_{\alpha}$ dividing $\Gamma$ into regions $U_i$, with $\diam(S_{\alpha})<L_0$, $\diam(U_i)<L_0$, and
            \begin{align*}
                \partial U_i = \bigcup_{\alpha\in A(i)} S_{\alpha}
            \end{align*}
  Since the Hurewicz map $\pi_2(X)\longrightarrow H_2(X)$ vanishes, by \eqref{eq: H-isomorphism} we have the Hurewicz map $\pi_2(\Tilde{Y})\longrightarrow H_2(\Tilde{Y})$ vanish. Therefore each $S_{\alpha}$ is homologous to zero in $\Tilde{Y}$. By Lemma \ref{lem: HZ23}, there exists 3-chains $T_{\alpha}$, satisfying $\partial T_{\alpha} = S_{\alpha}$ and $\diam T_{\alpha}<L_3 = L_3(L_0,g_Y)$. Denote
		\begin{align*}
			\hat{U}_i = U_i + \sum_{\alpha\in A(i)} (\pm T_{\alpha})
		\end{align*}
		The sign is chosen with respect to the orientation of $S_{\alpha}$ in $U_i$. We have $\partial \hat{U}_i = 0$. Since for each $S_{\alpha}$ we have filled in a pair of $T_{\alpha}$ with opposite orientation, at the level of homology class we have
		\begin{align*}
			[\Gamma] = \sum_{i=1}^u [\hat{U}_i]
		\end{align*}
		Since $\zeta([\Gamma])\ne 0$, there exists $\hat{U}_i$ such that $\zeta(\hat{U}_i)\ne 0$. By \eqref{eq: 85}, we have $\dist(\hat{U}_i,X_0)>R-L_3$, $\diam\hat{U}_i<L_0+L_3$. A contradiction then follows from Theorem \ref{thm: sphere width estimate} by letting $R\to +\infty$. This finishes the proof of Theorem \ref{thm1}.
	\end{proof}
	
	In light of the reduction Proposition \ref{prop: reduction}, for further understanding of Conjecture \ref{conj: RASC} for more general class of manifold $X$, it remains an important but maybe not easy problem of discussing whether the manifold admitting no PSC metric has certain $S^2$-stability property like Definition \ref{defn: S^2 stability}. Note here we could no more require the dominated $S^2$-stability property any more, since there does exists the example that $X$ admits no PSC metric, but manifold with degree 1 to $X$ admits PSC metric. To overcome this difficulty, it seems that one may need to appeal to generalized surgery argument developed in \cite{CRZ23}\cite{R23}.
	
	Though in general $S^2$-stability may be hard, at least we have the following slight extension of the class of such kind of manifold: Let $P$ be a parallizable $5$-dimensional closed manifold admitting a metric of non-positive sectional curvature, and $X^4$ a submanifold representing a non-zero homology class in $H_4(P,\mathbb{Q})$. Then $X^4$ has the dominated $S^2$-stability property. This could be proved by directly applying the method of the second proof of Theorem 13.8 in \cite{GL83}, combining with the argument used in the proof of Lemma \ref{lem: 2-sys estimate}. As a result, Conjecture \ref{conj: RASC} holds true for such $X$ in codimension 3.
	
	At the end of this subsection, we hope to raise the following conjecture, which seems not too farfetched by minimal hypersurface method.
	
	\begin{conj}
		Let $X^n$ be a Schoen-Yau-Schick manifold $(n\le 5)$, then $X^n$ has degree 1 version of dominated $S^2$-stability property. Here we call a oriented manifold $X^n$ is Schoen-Yau-Schick (see \cite{SY79b}\cite{Sch98}), if there exists $\alpha_1,\alpha_2,\dots,\alpha_{n-2}\in H^1(X)$, such that
		\begin{align*}
			[X]\smallfrown\alpha_1\smallfrown\alpha_2\smallfrown\dots\smallfrown\alpha_{n-2}
		\end{align*}
		does not lie in the Hurewicz image of $\pi_2(X)$.
	\end{conj}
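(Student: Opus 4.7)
The plan is to combine the cube-inequality strategy of Lemma \ref{lem: 2-sys estimate} with the classical Schoen-Yau-Schick iterated slicing. The SYS classes $\alpha_1, \ldots, \alpha_{n-2} \in H^1(X)$ are to play the role that enlargeability played in Lemma \ref{lem: 2-sys estimate}: they provide a collection of coordinate directions in a suitable cover of $X$ along which one can stretch a cube-like region. Given a degree-one map $f: Y^{n+2} \to S^2 \times X^n$ with $Sc_1^{\rtimes}(Y) \ge \sigma$, I would first pull the $\alpha_i$ back to $Y$ via $f$ composed with the projection $q: S^2 \times X \to X$, obtaining classes $\beta_i = f^*q^*\alpha_i \in H^1(Y)$. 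These together define a map $Y \to \mathbb{T}^{n-2}$. Pass to the abelian covering $\tilde Y \to Y$ on which all $\beta_i$ become exact; on $\tilde Y$ we get a proper Lipschitz map $\tilde h: \tilde Y \to \mathbb{R}^{n-2}$, and $f$ lifts to $\tilde f: \tilde Y \to S^2 \times \tilde X$. For large $L > 0$, set $\Omega = \tilde h^{-1}([-L, L]^{n-2})$; its pairs of opposite faces are separated by a distance growing linearly in $L$.

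Next, I would apply Lemma \ref{lem: mu bubble reduction} to $\Omega$ with cube map $\tilde h|_\Omega$. The point pullback at a regular value gives a class in $H_4(\Omega, \partial_{side}\Omega)$ which is nonzero because $\tilde f$ has degree one onto $S^2 \times \tilde h^{-1}(t)$ locally. The lemma then produces a $4$-dimensional embedded submanifold $W \subset \Omega$ with $Sc_{n-1}^{\rtimes}(W) \ge \sigma/2$ once $L$ is large, whose image under $\tilde f$ represents (up to a nonzero integer multiple) the class $[S^2] \otimes [W']$ where $W'$ is a $2$-chain in $\tilde X$ of class $[X] \smallfrown \alpha_1 \smallfrown \cdots \smallfrown \alpha_{n-2}$.

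To extract the desired $2$-sphere from $W^4$, I would invoke the SYS hypothesis: since $[W']$ is not in the Hurewicz image of $\pi_2(\tilde X)$, the surface $W'$ must contain a closed component of positive genus, hence an enlargeable $2$-manifold. Pulling back its contracting maps to $W$ through $\tilde f|_W : W \to S^2 \times W'$, one can then run the argument of Lemma \ref{lem: 2-sys estimate} with $W^4$ as the ambient manifold mapping to $S^2$ times an enlargeable $2$-manifold, producing a $2$-sphere $\Sigma \subset W$ with diameter $\le C/\sqrt{\sigma}$ whose image under $\tilde f_*$ has nontrivial $[S^2]$-component. Pushing $\Sigma$ down to $Y$ yields the desired sphere, and tracking the class $[S^2] \otimes [W']$ through both reductions confirms condition (3) of Definition \ref{defn: S^2 stability}.

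The main obstacle is the dimensional bookkeeping in the two successive cube-inequality applications. Each application uses up stabilization room against the budget $n + k + N \le 7$ in Lemma \ref{lem: mu bubble reduction}, so the scheme above only works cleanly for $\dim X \le 4$. For $\dim X = 5$ (i.e., $\dim Y = 7$), a single-shot argument, perhaps an extension of Lemma \ref{lem: mu bubble reduction} allowing one of the coordinate directions to be replaced by an $S^2$ factor, would likely be required; developing such an extension is the key technical difficulty. A second delicate point is the rigorous treatment of $W'$, which is a chain with boundary on the faces of $\Omega$ rather than a closed surface, forcing one to carry out the enlargeability argument of Lemma \ref{lem: 2-sys estimate} in a relative, free-boundary form.
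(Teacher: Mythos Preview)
The statement you are attempting to prove is presented in the paper as an open \emph{conjecture}; the paper offers no proof, only the remark preceding it that the statement ``seems not too farfetched by minimal hypersurface method.'' So there is no paper argument to compare against, and your proposal has to be read as an attack on an open problem.

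Your first reduction is sound: pulling back the $\alpha_i$, passing to the abelian cover, and applying Lemma \ref{lem: mu bubble reduction} does yield a closed $4$-manifold $W\subset \tilde Y$ with $Sc_{n-1}^{\rtimes}(W)\ge\sigma/2$ and $\tilde f_*[W]=[S^2]\times[\Sigma_t]$ in $H_4(S^2\times\Omega_X)$, where $\Sigma_t=\varphi^{-1}(t)\subset\tilde X$ is the fibre surface.

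The gap is in the second reduction. You write ``$\tilde f|_W:W\to S^2\times W'$,'' but no such map exists: $W$ is a $\mu$-bubble merely \emph{homologous} to $\tilde f^{-1}(S^2\times\Sigma_t)$, not that preimage itself, so $\tilde f(W)$ lands in $S^2\times\tilde X$, not in $S^2\times\Sigma_t$. To invoke Lemma \ref{lem: 2-sys estimate} a second time you would need a nonzero-degree map from $W^4$ to $S^2$ times an enlargeable surface. One route would be a map $\rho:\tilde X\to\Sigma_g$ to a positive-genus surface with $\rho_*[\Sigma_t]\ne 0$, so that $(\mathrm{id}\times\rho)\circ\tilde f|_W$ has nonzero degree; but the SYS hypothesis does not supply such a $\rho$. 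Non-sphericality of $c=[X]\smallfrown\alpha_1\smallfrown\cdots\smallfrown\alpha_{n-2}$ is a statement about maps of $S^2$ \emph{into} $X$, not about maps \emph{out of} $X$ to a surface, and there is no general mechanism to convert one into the other. This is the substantive obstacle behind the conjecture, distinct from (and more serious than) the dimension-counting issue at $n=5$ and the free-boundary technicality you already flag.
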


	\section{The weakly relative aspherical condition}
	
	In this section, we prove Theorem \ref{thm2}. Throughout this section $Y$ would be weakly aspherical relative to $X$.
	
	Assume $(Y,g_Y)$ has scalar curvature greater than $2$. Pass $Y$ to its Riemannian covering $\Tilde{Y}$ as what has been done in Sec. 4.1. Let $\phi: X\longrightarrow Z$ be the non-zero degree map to some enlargeable, aspherical manifold $Z$. Since $i:X_0\longrightarrow \Tilde{Y}$ induces isomorphism in $\pi_1$, By Theorem 1B.9 in \cite{Hat02}, there is a map $\Phi:\Tilde{Y}\longrightarrow Z$, such that $\Phi_* = \phi_*\circ (i_*)^{-1}:\pi_1(\Tilde{Y})\longrightarrow \pi_1(Z)$. Moreover, it is clear from our construction that $\Phi\circ i$ and $\phi$ induces the same homomorphism from $\pi_1(\Tilde{Y})$ to $Z$. By the uniqueness part of Theorem 1B.9 in \cite{Hat02}, $\Phi\circ i$ must be homotopic to $\phi$. Therefore, the following diagram commutes up to homotopy:
	\begin{equation*}
		\xymatrix{\Tilde{Y}\ar[rd]_{\Phi}& X_0\ar[d]^{\phi}\ar[l]_i\\
			&Z}
	\end{equation*}
	Without loss of generality we set $\phi = \Phi\circ i$. By small perturbation we can assume the map $\Phi:\tilde{Y}\longrightarrow X_0$ is smooth. Since $\degg\phi\ne 0$, the image of $i_*$ must be infinite cyclic. Hence, $\Tilde{Y}$ must be noncompact owing to Lemma \ref{lem: noncompact}.
	
	We denote $U_R = \lbrace y\in\Tilde{Y},\dist(y,X_0)<R\rbrace$. It is clear that $U_R$ is a compact region in $\Tilde{Y}$ for all $R>0$. Let $d_0$ be sufficiently large such that
	\begin{align}\label{eq: d_0}
		\frac{4(n+k-1)\pi^2}{(n+k)d_0^2}<\frac{1}{2}
	\end{align}
	Note that the region $\mathcal{U}=U_{R+d_0}\backslash U_R$ is a Riemannian band with $\partial U_{R+d_0}\backslash U_R = \partial U_{R+d_0}\cup\partial U_R$, such that
	\begin{align*}
		\mbox{width}(\mathcal{U}) = \dist(\partial U_{R+d_0},\partial U_R)>d_0
	\end{align*}
	in the sense of \cite{Gro18}.
	By the compactness of $U_{R+d_0}$, the differential of $\Phi$ must be bounded from above:
	\begin{align*}
		\Lip\Phi<C \mbox{ on } U_{R+d_0}
	\end{align*}
	
	Next we have to find suitable coverings $(\hat{Y}, \hat{Z})$ of $(\Tilde{Y},Z)$. Let $d$ be sufficiently large such that
	\begin{align}\label{eq: d}
		\frac{4(n+k-1)\pi^2}{(n+k)}\cdot\frac{kC^2}{d^2}<\frac{1}{2}
	\end{align}
	By Lemma \ref{lem: enlargeable}, we can find a covering $\hat{Z}$ of $Z$ and a cube like region $V$ in $\Tilde{Z}$, such that
	\begin{equation}\label{eq: V-dist}
		\begin{split}
			\dist(\partial_{-i}V,\partial_{+i}V)>d, \mbox{ for } i = 1,2,\dots,n\\
			\varphi: V\longrightarrow [-1,1]^n \mbox{ has nonzero degree }
		\end{split}
	\end{equation}
	Let $\hat{Y}\longrightarrow \Tilde{Y}$ be the covering which corresponds the pullback object in the right part of the following diagram, and $\hat{X}$ simply be $p_{\hat{Y}}^{-1}(X_0)$. 
	
	\begin{equation}\label{diagram: 1}
		\xymatrix{\hat{X}\ar[r]^{\hat{i}}\ar[d]^{p_{\hat{X}}}&\hat{Y}\ar[r]^{\hat{\Phi}}\ar[d]^{p_{\hat{Y}}}&\hat{Z}\ar[d]^{p_{\hat{Z}}}\\
			X_0\ar[r]^i&\tilde{Y}\ar[r]^{\Phi}&Z}
	\end{equation}
        Let $\hat{\phi} = \hat{\Phi}\circ\hat{i}$, it is clear that $\degg\hat{\phi} = \degg\phi\ne 0$ (Here the degree is defined for proper map). Let
	\begin{align*}
		\hat{U}_R = &p_{\hat{Y}}^{-1}(U_R), \hat{U}_{R+d_0} = p_{\hat{Y}}^{-1}(U_{R+d_0})\\
		&\Omega = \hat{\Phi}^{-1}(V)\cap\hat{U}_{R+d_0}\\
		\Omega_0 &= \hat{\Phi}^{-1}(V)\cap\hat{U}_{R+d_0}\backslash\hat{U}_R
	\end{align*}
	Therefore $\Omega$ is a cubical region in the sense of Section 2. In fact, we can define
	\begin{align*}
		\partial_{\pm i}\Omega = \hat{\Phi}^{-1}(\partial_{\pm i}V)\\
		\partial_{eff} = \bigcup_{i=1}^n \partial_{\pm i}\Omega\\
		\partial_{side} = \Omega\cap\partial\hat{U}_{R+d_0}\\
	\end{align*}
and $f = \varphi\circ\hat{\Phi}: \Omega\longrightarrow [-1,1]^n$, sending the effective boundary to $[-1,1]^n$. Moreover, we denote
 \begin{align*}
     (W,\partial W) = (\Omega,\partial\Omega)\cap \hat{X}
 \end{align*}
 From our construction, we also have $(W,\partial W) = \hat{\phi}^{-1}(V,\partial V)$. If we denote $\hat{\phi}_0 = \hat{\phi}|_{(W,\partial W)}$, then $\degg \hat{\phi}_0 = \degg\phi\ne 0$.
	
	It follows from the diagram \ref{diagram: 1} that
	\begin{align}\label{eq: Lip}
		\Lip\hat{\Phi}|_{\hat{U}_{R+d_0}} = \Lip\Phi|_{U_{R+d_0}}<C
	\end{align}
	Let $d_i = \dist(\partial_{-i}\Omega,\partial_{+i}\Omega)$, from \eqref{eq: Lip}\eqref{eq: V-dist} we have $d_i>\frac{d}{C}$. Combining with \eqref{eq: d} and recall the definition of $C_{n,k}(d_i)$ in \eqref{eq: Cnkdi}, we have
	\begin{align}\label{eq: Cnkdi 2}
		C_{n,k}(d_i)<\frac{1}{2}
	\end{align}
	
	The next step is to use Lemma \ref{lem: mu bubble reduction} to find small spheres collecting the information of scalar curvature. Let us first examine a non-trivially intersecting condition. Assume $0$ is a regular value of $f$ and let $h\in H_k(\Omega,\partial_{side}\Omega)$ be the homology class representing $f^{-1}(0)$. Let $\hat{i}:\hat{X}\longrightarrow\hat{Y}$ be the inclusion map. It is clear that $\hat{i}_*[W,\partial W$ represents a homology class in $H_n(\Omega,\partial_{eff}\Omega)$. Since $\varphi^{-1}(0)=\lbrace p_1,p_2,\dots,p_l\rbrace\subset V$ with $l = \deg \varphi\ne 0$, we have:
	\begin{align*}
		f^{-1}(0) = \bigcup_{i=1}^l \hat{\Phi}^{-1}(p_i)
	\end{align*}
	By Lemma \ref{lem: counting} we are able to compute:
	\begin{align*}
		&h\cdot \hat{i}_*[W,\partial W] = \sum_{i=1}^l[\hat{\Phi}^{-1}(p_i)]\cdot\hat{i}_*[W,\partial W]\\
		=& \sum_{i=1}^l\hat{\Phi}^*[V,\partial V]^*(\hat{i}_*[W,\partial W])\\
		=& \sum_{i=1}^l[V,\partial V]^*(\hat{\Phi}_*\hat{i}_*[W,\partial W])\\
            =& l\degg\hat{\phi}_0\ne 0
	\end{align*}
	This shows any submanifold representing $h$ has intersection number $l$ with $\hat{X}$. In particular, $h\ne 0$.
	
	By Lemma \ref{lem: mu bubble reduction} and \eqref{eq: Cnkdi 2}, there exists a submanifold $\Sigma^k$ representing $h\in H_k(\Omega,\partial_{side}\Omega)$, such that
	\begin{align*}
		Sc_n^{\rtimes}(\Sigma)\ge Sc(\Omega)-C_{n,k}(d_i)\ge 2-\frac{1}{2} = \frac{3}{2}
	\end{align*}
	Since $\Sigma$ and $\hat{X}$ have non-zero intersection, by Lemma \ref{lem: intersection 2} $\Sigma$ cannot be closed. Therefore $\partial\Sigma\ne\emptyset$ and we have $\partial\Sigma\subset \partial\Omega$. Similarly, one is able to show $\Sigma\cap\partial \hat{U}_{R}\ne\emptyset$, or else the portion of $\Sigma$ in $\hat{U}_R$ is a closed one and one obtains contradiction by Lemma \ref{lem: intersection}.
	
	Denote $\Sigma_{R,R+d_0} = \Sigma\cap (\hat{U}_{R+d_0}\backslash \hat{U}_R)$. It is clear that $\Sigma_{R,R+d_0}$ is a Riemannian band with $\width(\Sigma_{R,R+d_0})>d_0$. By a standard $\mu$-bubble argument as in \cite{CL20}\cite{Gro20}\cite{GZ21}\cite{Zhu23} one could find a submanifold $\Gamma^{k-1}\subset\Sigma_{R,R+d_0}$ which separates $\Sigma\cap\hat{U}_{R+d_0}$ and $\Sigma\cap\hat{U}_{R}$, with
	\begin{align}\label{eq: Sc lower bound}
		Sc_{n+1}^{\rtimes}(\Gamma)\ge Sc_n^{\rtimes}(\Sigma)-\frac{4(n+k-1)\pi^2}{(n+k)d_0^2}\ge\frac{3}{2}-\frac{1}{2}\ge 1
	\end{align}
	Denote the portion of $\Sigma$ bounded by $\Gamma$ to be $\Sigma_0$. Since $\partial\Sigma_0 = \Gamma$ and $\Gamma\cap\hat{X}=\emptyset$, By slight perturbation of $\Sigma_0$ away from $\Gamma$ we can assume $\Sigma_0$ intersects $\hat{X}$ transversally.
	
	We will then carry out our argument back in the original pair $(\Tilde{Y},X_0)$. Since $\Sigma_0$ intersects transversally with $\hat{X} = p_{\hat{Y}}^{-1}(X)$, and note that for $x\in \Sigma_0$,
	\begin{align*}
		p_{\hat{Y}}(x)\in X_0 \mbox{ if and only if } x\in  \hat{X} = p_{\hat{Y}}^{-1}(X)
	\end{align*}
	The geometric intersection number of $p_{\hat{Y}}:\Sigma_0\longrightarrow \Tilde{Y}$ and $X_0$ must equal that of $\Sigma_0$ and $\hat{X}$, hence does not equal to zero. By Lemma \ref{lem: intersection 1}, $\zeta([\Gamma])\ne 0$. As a result, there is a connected component $\Gamma_0$ of $\Gamma$ such that $\zeta([\Gamma_0])\ne 0$.
	
	If $k=3$, by letting $R\to +\infty$, a contradiction follows from diameter estimate and Theorem \ref{thm: sphere width estimate}. If $k=4$, the conclusion follows by exactly the same argument used in the proof of Theorem \ref{thm1}. One only needs to note that the spheres in Lemma \ref{lem: slice and dice} dividing $\Gamma_0$ into parts are homologous to zero by our vanishing Hurewicz map assumption, then we are able to find a 3-chain with bounded diameter, non-zero $\zeta$-image, and supported arbitrarily far away from $X_0$. This contradicts with Theorem \ref{thm: sphere width estimate}.

	\section{Proof of the Corollaries}
	
	In this section, we will prove the corollaries. We begin by recalling the following result by Gromov, which implies characterization for closed 3-manifold admitting no PSC metric.
	\begin{lemma}\label{lem: Gro23}
		(\cite{Gro23}, Chapter 3.10) Let $Z$ be a closed 3-dimensional aspherical manifold, then the universal covering $\Tilde{Z}$ of $Z$ is hyperspherical.
	\end{lemma}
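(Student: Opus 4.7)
The strategy is to combine Perelman's geometrization theorem with direct volume-growth constructions on each of Thurston's aspherical model geometries.

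First, since $Z$ is closed aspherical of dimension $3$, the sphere theorem together with asphericity force $Z$ to be irreducible and $\pi_1(Z)$ to be infinite and torsion-free. By geometrization, the JSJ decomposition of $Z$ splits it along a (possibly empty) family of incompressible tori into pieces $Z_1,\dots,Z_m$, each of which is either a finite-volume complete hyperbolic $3$-manifold (with toroidal cusps) or Seifert-fibered with infinite $\pi_1$. Correspondingly, the universal cover of each piece carries a metric modelled on one of the six aspherical Thurston geometries $\mathbb{H}^3$, $\mathbb{H}^2\times\mathbb{R}$, $\widetilde{SL}_2(\mathbb{R})$, Nil, Sol, or $\mathbb{E}^3$; the remaining spherical geometries cannot appear.

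Second, I would verify hypersphericity locally on the universal cover of one fixed piece. For each of these six geometries the universal cover has at least polynomial (in fact, at least $R^3$) volume growth for geodesic balls of radius $R$. A ball of radius $R$ can therefore be mapped onto $S^3$ with roughly constant Jacobian, hence with Lipschitz constant tending to $0$ as $R\to\infty$, and the exterior can be collapsed to a chosen base point to yield a proper degree-one $\epsilon$-Lipschitz map. This directly parallels the classical argument that $\mathbb{H}^n$ and $\mathbb{R}^n$ are hyperspherical.

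Third, I would assemble the local construction into a global one. Fix a piece $Z_1$ and a lift $\tilde{Z}_1\subset\tilde{Z}$; since $\pi_1(Z_1)\to\pi_1(Z)$ is injective by incompressibility of the JSJ tori, each component of the preimage of $Z_1$ in $\tilde{Z}$ is a copy of the universal cover of $Z_1$. Apply the local construction with support contained in a ball lying in the interior of $\tilde{Z}_1$, and extend by the collapse point on $\tilde{Z}\setminus\tilde{Z}_1$. The resulting global map is continuous, $\epsilon$-Lipschitz, proper, and of degree one, so $\tilde{Z}$ is hyperspherical. The main obstacle is that the radius $R$ of the supporting ball must be taken large enough to achieve the prescribed Lipschitz bound while still fitting inside $\tilde{Z}_1$ away from its boundary planes; this is possible because $\tilde{Z}_1$ is non-compact in every direction, so by placing the basepoint deep inside $\tilde{Z}_1$ one accommodates arbitrarily large balls without touching any lift of a JSJ torus.
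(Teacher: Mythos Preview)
The paper does not supply its own proof of this lemma; it is quoted directly from Gromov \cite{Gro23}, Chapter~3.10, and used as a black box. So there is nothing in the paper to compare your argument against, and I will only assess the proposal on its own merits.

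Your step~2 contains a genuine error. The assertion that polynomial (or larger) volume growth of metric balls lets one map $B_R$ onto $S^3$ ``with roughly constant Jacobian, hence with Lipschitz constant tending to $0$'' is false: a small Jacobian does not control the Lipschitz constant. What actually makes $\mathbb{E}^3$ and $\mathbb{H}^3$ hyperspherical is that the inverse exponential map based at any point is $1$-Lipschitz onto a Euclidean ball of radius $R$, which one then collapses to $S^3$ with Lipschitz constant $\sim 1/R$. For Nil, Sol and $\widetilde{SL}_2$ no such comparison is available, and one needs separate, geometry-specific constructions (which do exist, but you have not supplied them).

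Your step~3 has a more serious gap. When a JSJ piece $Z_1$ has nonempty torus boundary, the lift $\tilde Z_1\subset\tilde Z$ does \emph{not} contain metric balls of arbitrarily large radius. For a finite-volume hyperbolic piece, $\tilde Z_1$ is $\mathbb{H}^3$ with a $\Gamma$-equivariant family of disjoint open horoballs removed; since the quotient $\tilde Z_1/\Gamma$ is the compact thick part of $Z_1$, every point of $\tilde Z_1$ lies within distance $\diam(\tilde Z_1/\Gamma)$ of some removed horoball. Thus ``placing the basepoint deep inside $\tilde Z_1$'' is impossible precisely in the case of nontrivial JSJ decomposition. Your argument therefore only covers closed geometric $Z$; for the mixed case you would need a different mechanism, for instance one that exploits the incompressible JSJ tori themselves or the tree-of-spaces structure of $\tilde Z$, rather than a single large ball in one piece.
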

	
	\begin{lemma}\label{lem: 3-enlargeable mfd}
		Let $X$ be a closed 3-dimensional manifold which admits no PSC metric, then $X$ is enlargeable.
	\end{lemma}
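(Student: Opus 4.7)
The plan is to combine the topological classification of closed $3$-manifolds admitting no PSC metric with Lemma \ref{lem: Gro23} via a pullback-and-compose construction. Assume first that $X$ is orientable. By the work of Gromov-Lawson and Schoen-Yau on the positive side together with Perelman's resolution of the geometrization conjecture on the negative side, a closed orientable $3$-manifold admits a PSC metric if and only if each summand of its Kneser-Milnor prime decomposition is either a spherical space form or $S^2\times S^1$. Since $X$ carries no PSC metric, there must exist a prime summand $Z$ of $X$ that is neither spherical nor $S^2\times S^1$, and by geometrization every such closed orientable prime $3$-manifold is aspherical. Collapsing all other summands to a point yields a smooth map $f\colon X\longrightarrow Z$ of degree $\pm 1$.

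Next I would invoke Lemma \ref{lem: Gro23} to assert that the universal cover $\tilde Z$ is hyperspherical: for every $\epsilon>0$ there is a smooth map $g_\epsilon\colon \tilde Z\longrightarrow S^3$ of non-zero degree which is constant outside a compact set and satisfies $\Lip g_\epsilon\le \epsilon$. Pulling back the universal covering $p_Z\colon \tilde Z\longrightarrow Z$ along $f$ produces a covering
\begin{equation*}
\tilde X=\{(x,\tilde z)\in X\times \tilde Z : f(x)=p_Z(\tilde z)\}\longrightarrow X,
\end{equation*}
together with the tautological projection $\tilde f\colon \tilde X\longrightarrow \tilde Z$. When $\tilde X$ and $\tilde Z$ carry the pulled-back metrics, $\tilde f$ coincides locally with $f$ via the covering isometries, so lifting curves gives $\Lip\tilde f\le \Lip f$. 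A direct check shows that $\tilde f$ is proper (the preimage of a compact $K\subset \tilde Z$ sits inside $f^{-1}(p_Z(K))\times K$) and has degree equal to $\deg f=\pm 1$. The composition $g_\epsilon\circ\tilde f\colon \tilde X\longrightarrow S^3$ is therefore constant outside a compact set, has non-zero degree, and satisfies $\Lip(g_\epsilon\circ\tilde f)\le \epsilon\cdot\Lip f$. Since $\Lip f$ does not depend on $\epsilon$, letting $\epsilon\to 0$ verifies Definition \ref{Defn: enlargeable} and shows $X$ is enlargeable.

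For the non-orientable case, I would pass to the orientation double cover $X'\to X$, which is a closed orientable $3$-manifold also admitting no PSC metric (by the $3$-dimensional PSC classification, PSC descends under the $\mathbb{Z}/2$-quotient). By the orientable case already handled, $X'$ is enlargeable, and since every oriented covering of $X'$ is automatically an oriented covering of $X$, the enlargeability of $X$ follows. The main obstacle in this plan is the topological input in the very first step, which uses the full strength of geometrization together with the PSC classification for orientable $3$-manifolds; granted this, the pullback-and-compose construction is in the spirit of the classical Gromov-Lawson derivation of enlargeability from hypersphericity of a universal cover.
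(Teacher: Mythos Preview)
Your proposal is correct and follows essentially the same route as the paper: identify an aspherical prime summand $Z$ of $X$ via the $3$-dimensional PSC classification, collapse to obtain a degree-one map $X\to Z$, and then invoke Lemma~\ref{lem: Gro23} on the hypersphericity of $\tilde Z$. The paper's proof compresses all of this into two sentences and simply says ``the result follows immediately from Lemma~\ref{lem: Gro23}''; you have spelled out the pullback-and-compose step that turns a degree-one map into an enlargeable covering, and you have added a separate treatment of the non-orientable case, neither of which the paper makes explicit.
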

	\begin{proof}
		By the classification of PSC 3-manifold \cite{GL83}, each 3-manifold $M$ admitting no PSC metric contains an aspherical factor $X$, which shows that it admits a degree $1$ map to $X$. The result follows immediately from Lemma \ref{lem: Gro23}.
	\end{proof}
	
	\begin{proof}[Proof or Corollary \ref{cor1}]
		The $k=3$ case follows from Lemma \ref{lem: 3-enlargeable mfd} and Theorem \ref{thm1}. The $k=4$ case follows from the fact that for a compact 3-manifold $X$, if it contains no $S^2\times S^1$ factor in its prime decomposition, then the Hurewicz map $\pi_2(X)\longrightarrow H_2(X)$ vanishes. In fact, such manifold is made purely by irreducible factors, each factor has vanishing $H_2$ on their universal covering. By the Hurewicz's Theorem and Mayer-Vietoris Theorem, element representing $\pi_2(X)$ only appears as the connecting sphere used to construct the connected sum, which is obviously homologous to zero in $X$.
	\end{proof}
	
	Next we prove a lemma which will be used in the proof of Corollary \ref{cor2}.
	
	\begin{lemma}
		Let $F$ be an enlargeable manifold, then the $F$ bundle $E$ over $S^1$ is also enlargeable.
	\end{lemma}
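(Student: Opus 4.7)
My plan is to verify the cube characterization of enlargeability for $E$: for every $d>0$, I will exhibit an oriented covering of $E$ containing a cube-like region whose $n+1$ pairs of opposite faces are separated by distances exceeding $d$. Combined with the standard converse of Lemma~\ref{lem: enlargeable} (collapse the boundary of the cube to produce a map to $S^{n+1}$ of small Lipschitz constant and nonzero degree), this will yield enlargeability of $E$ in the sense of Definition~\ref{Defn: enlargeable}.

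First, I will realize $E$ as the mapping torus of the monodromy diffeomorphism $\phi:F\to F$. Pulling back the fibration $E\to S^1$ along the universal cover $\mathbb{R}\to S^1$ produces a covering $\bar E\cong F\times\mathbb{R}\to E$ on which the bundle trivializes, with deck $\mathbb{Z}$-action generated by $(y,t)\mapsto(\phi(y),t-1)$. Next, applying Lemma~\ref{lem: enlargeable} to the enlargeable $F$ with constant $d$, I obtain a covering $p_F:\tilde F\to F$, a cube-like region $V\subset\tilde F$ with $\dist(\partial_{-i}V,\partial_{+i}V)>d$ for $i=1,\dots,n$, and a nonzero-degree map $\varphi:V\to[-1,1]^n$. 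I then form the product covering $\hat E=\tilde F\times\mathbb{R}$ of $E$ via the composition $\tilde F\times\mathbb{R}\xrightarrow{p_F\times\mathrm{id}}F\times\mathbb{R}=\bar E\to E$, and observe that $W=V\times[0,d+1]\subset\hat E$ is cube-like in dimension $n+1$: the first $n$ pairs of opposite faces $\partial_{\pm i}V\times[0,d+1]$ inherit separation greater than $d$ from $V$, while the $(n+1)$-st pair $V\times\{0\}$, $V\times\{d+1\}$ has separation $d+1$. The map $\Psi:W\to[-1,1]^{n+1}$, $(y,t)\mapsto\bigl(\varphi(y),\frac{2t}{d+1}-1\bigr)$, has degree equal to $\deg\varphi\neq 0$.

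The main obstacle will be to verify that $\hat E=\tilde F\times\mathbb{R}\to E$ is genuinely a covering map, since the monodromy $\phi$ need not lift to a diffeomorphism of $\tilde F$, so one cannot a priori build a $\tilde F$-bundle over $S^1$ covering $E$. The resolution lies in the order of operations: one first unwinds the base $S^1$ to $\mathbb{R}$, which trivializes the fibration and removes any dependence on $\phi$, and only afterwards takes the product with $p_F\times\mathrm{id}$. The resulting composition is locally trivial (its preimage of a product coordinate neighborhood in $E$ decomposes as a disjoint union of copies indexed by $p_F^{-1}(\mathrm{pt})\times\mathbb{Z}$), hence a bona fide covering map. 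Orientability of $\hat E$ is automatic from the orientability of $\tilde F$, which is part of the enlargeable definition, together with that of $\mathbb{R}$.
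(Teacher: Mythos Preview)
Your overall route matches the paper's: pass to the infinite cyclic cover $\bar E\cong F\times\mathbb R$, then to a further cover $\hat E=\tilde F\times\mathbb R$ using enlargeability of $F$. But there is a genuine metric gap. For the cube criterion to yield enlargeability of $E$, the distances $\dist(\partial_{-i}W,\partial_{+i}W)$ must be taken in the metric on $\hat E$ pulled back from $g_E$, whereas your claims (``inherit separation greater than $d$ from $V$'' and ``separation $d+1$'') are only valid for the product metric $p_F^*g_F+dt^2$. These two metrics can differ drastically. For instance, if $F=T^2$ and $\phi$ is an Anosov map (so $E$ is a Sol-manifold), the induced metric on the fiber over $t$ is distorted by a factor $e^{\lambda|t|}$ relative to $g_F$; a path between opposite $V$-faces in $\hat E$ can then be shortened by first traveling in $t$ to a height where the relevant fiber direction is exponentially contracted, and the actual face-separation is only $O(\log d)$, not $>d$.

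The paper takes the same covers but works directly with Lipschitz maps rather than cubes, building $\rho\circ(\eta\times f)\circ\hat\Phi:\hat E_L\to S^{n+1}$. The point your argument misses is that the interval length $L$ and the contraction $\epsilon=\Lip f$ are \emph{independent} parameters: one first fixes $L$, records the Lipschitz bound $C_2=C_2(L)$ of the fiber projection $r:\tilde E_L\to F$ (finite by compactness of $\tilde E_L$), and only afterwards chooses $\epsilon$ small enough to absorb $C_2(L)$. By tying the interval length $d+1$ to the same $d$ that governs the cube $V\subset\tilde F$, you leave no free parameter to compensate for the fiberwise distortion. If you want to rescue the cube approach, you must perform the same decoupling: given the target $d$, fix $L$ first, bound the bilipschitz constant $K(L)$ between the pullback and product metrics on $\tilde F\times[0,L]$, and then invoke enlargeability of $F$ to produce $V$ with face-separations exceeding $K(L)\cdot d$.
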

	\begin{proof}
		Since enlargeability is topological invariant, we can discuss the problem under fixed metric $g_E$ and $g_F$ on $E$ and $F$. Let $\pi:\mathbb{R}^1\longrightarrow S^1$ be the universal covering and $\Tilde{E} = \pi^*E$, we have the following diagram
		\begin{equation*}
			\xymatrix{&\Tilde{E}\ar[r]^{\Pi}\ar[d]^q &E\ar[d]^{p}\\
				&\mathbb{R}^1\ar[r]^{\pi} &S^1}
		\end{equation*}
		We have
		\begin{align*}
			\Lip q=\Lip p<C_1<+\infty
		\end{align*}
		Note that $\Tilde{E}$ is trivial by the contractibility of $\mathbb{R}^1$. Fix $L>0$ , restrict $\Tilde{E}$ on $[0,L]$ to obtain the bundle $\Tilde{E}_L$. Consider the projection map $r: \Tilde{E}_L\longrightarrow F$. By the compactness of $\Tilde{E}_L$, we have
		\begin{align*}
			\Lip r<C_2<+\infty
		\end{align*}
		We then get the diffeomorphism from $\Tilde{E}_L$ to the Riemannian product $[0,L]\times F$
		\begin{align*}
			\Phi: \Tilde{E}_L \stackrel{q\times r}{\longrightarrow} [0,L]\times F
		\end{align*}
		with
		\begin{align*}
			\Lip\Phi <C_1+C_2
		\end{align*}
		
		For any $\epsilon>0$, we find a covering space $\Tilde{F}$ of $F$ and a map $f:\Tilde{F}\longrightarrow S^n$ with non-zero degree. This induces the map
		$\hat{\Phi}:\hat{E}_L\longrightarrow [0,L]\times \Tilde{F}$, where $\hat{E}_L$ is the covering of $\Tilde{E}_L$ induced by $[0,L]\times \Tilde{F}$. Let $\eta: [0,L]\longrightarrow S^1$ be the composition of the map pinching two ends of $[0,1]$ into a single point and the retraction map $[0,L]\longrightarrow [0,1]$ with $\Lip \eta<\frac{1}{L}$, and $\rho: S^1\times S^n\longrightarrow S^{n+1}$ be a non-zero degree map with $\Lip\rho<C_3=C(n)$. Then,
		\begin{align*}
			F = \rho\circ(\eta\times f)\circ\hat{\Phi} : \hat{E}_L\longrightarrow S^{n+1}
		\end{align*}
		is a map of non-zero degree, with
		\begin{align*}
			\Lip F< C_3(\frac{1}{L}+\epsilon)(C_1+C_2)
		\end{align*}
		By Letting $L\to+\infty$ and $\epsilon\to 0$, $\Lip F$ could be arbitrarily small. And by this construction it is easy to see that $\Tilde{E}$ is also enlargeable.
	\end{proof}
	
	\begin{proof}[Proof of Corollary \ref{cor2}]
		Since $B$ is a closed aspherical manifold, it is well known that one can find a $S^1$ in $B$ such that the homomorphism $\pi_1(S^1)\longrightarrow\pi_1(B)$ induced by the inclusion map is injective. Consider the restricted $F$ bundle of $Y$ on $S^1$, and denote this bundle to be $E$. Since $\pi_2(B) = 0$, by the long exact sequence of the homotopic group of fiber bundles, $F$ is incompressible in $Y$. Consider the following diagram:
		
		\begin{equation*}
			\xymatrix{0\ar[r]&\pi_1(F)\ar[r]\ar[d]^{id}&\pi_1(E)\ar[r]\ar[d]&\pi_1(S^1)\ar[r]\ar[d]&0\\
				0\ar[r]&\pi_1(F)\ar[r]&\pi_1(Y)\ar[r]&\pi_1(B)\ar[r]&0}
		\end{equation*}
		
		By a similar diagram chase as in \cite{He23}, $E$ is also incompressible in $Y$. Since both $S^1$ and $B$ are aspherical, for $i\ge2$ we consider the following diagram:
		
		\begin{equation*}
			\xymatrix{0\ar[r]&\pi_i(F)\ar[r]^{\cong}\ar[d]^{id}&\pi_i(E)\ar[r]\ar[d]&0\\
				0\ar[r]&\pi_i(F)\ar[r]^{\cong}&\pi_i(Y)\ar[r]&0}
		\end{equation*}
		
		Therefore the map $\pi_i(E)\longrightarrow\pi_i(Y)$ is an isomorphism for $i\ge2$, which shows $Y$ is aspherical relative to $E$. Note that the codimension of $E$ in $Y$ is $k-1$.
		
		(1) $k-1 = 3$. The conclusion follows easily from Theorem \ref{thm1} and Lemma \ref{lem: 3-enlargeable mfd}.
		
		(2) $k-1 = 4$. We have $n-k\le 2$ in this case, which simply shows $F$ could only be $S^1$ or closed surface with positive genus, and therefore $\pi_2(F) = 0$.  The conclusion follows from Theorem \ref{thm1} of the same reason.
	\end{proof}
	
	\begin{proof}[Proof of Corollary \ref{cor3}]
		Assume that $Y^n$ deformes to $X^{n-2}$. We say $X$ has dominated twisted $S^1$ stability, if any compact manifold which admits a degree map to any $S^1$ bundle over $X$ admits no PSC metric. When $n-2\le 4$, since $X$ is aspherical, the $S^1$ bundle over $X$ is also aspherical. Then by \cite{CL20}\cite{CLL23}\cite{Gro20}, $X$ has dominated twisted $S^1$ stability, and the result follows from Proposition 5.2 in \cite{He23}. If $n-2=5$, denote $U_{\epsilon}$ to be the tubular neighbourhood of $X$. by Corollary \ref{cor2}, $E=\partial U_{\epsilon}$, the $S^1$ bundle over $X$ admits no PSC metric. It is not hard to verify that $E$ is incompressible in $Y\backslash X$. The result then follows from the generalized surgery argument as in \cite{CRZ23} and a standard $\mu$-bubble argument.
	\end{proof}
	
	\begin{proof}[Proof of Corollary \ref{cor4}]
		If $X$ has trivial normal bundle in $Y$, then the result follows directly from Theorem \ref{thm2}. In general case, we have to do some necessary modification for the proof of Theorem \ref{thm2}. Pass $X$ to its universal covering $\hat{X}$ and we have the following diagram
		\begin{equation}\label{diagram: 2}
			\xymatrix{\hat{Y}\ar[r]^{\hat{\Phi}}\ar[d]^{p_{\hat{Y}}}&\hat{X}\ar[d]^{p_{\hat{X}}}\\
				\tilde{Y}\ar[r]^{\Phi}&X}
		\end{equation}
		Since $X$ is aspherical, $\hat{X}$ is contractible, which yields the normal bundle of $\hat{X}$ in $\hat{Y}$ is trivial. Arguing as in the proof of Theorem \ref{thm2}, we can find a submanifold $\Sigma^k$ with nonzero intersection number with $\hat{X}$ with $Sc_n^{\rtimes}(\Sigma)>\frac{3}{2}$. Similarly we get a $\mu$-bubble $\Gamma^{k-1}$, far away from $\hat{X}$ and have $Sc_n^{\rtimes}(\Gamma)>1$. By the contractibility of $\hat{X}$ and \eqref{eq: H-isomorphism}, we have $H_i(\Tilde{Y})=0$ for $i\le k-1$. Therefore, by Lemma \ref{lem: HZ23} and Lemma \ref{lem: slice and dice}, we can fill $\Gamma^{k-1}$ by a $k$-chain in a neighbourhood of $\Gamma^{k-1}$ away from $\hat{X}$. This provides us with a closed $k$-chain with nonzero intersection number with $\hat{X}$, and a contradiction follows from Lemma \ref{lem: intersection 1}.
	\end{proof}
	
	\begin{proof}[Proof of Corollary \ref{cor5}]
		Let $G$ be the subgroup of $\pi_1(Y)$ and $X$ the $n-4$-dimensional closed aspherical manifold with $\pi_1(X)=G$. Let $J:\pi_1(X)\cong G\hookrightarrow \pi_1(Y)$ be a homomorphism. By Theorem 1B.9 in \cite{Hat02}, $J$ could be realized as a continuous map $f$. Since $n>2(n-4)$ when $n\le 7$, we can make $f$ into an embedding. The conclusion then follows from Corollary \ref{cor4}.
	\end{proof}


\begin{thebibliography}{100}
		\bibitem[CL20]{CL20} O. Chodosh and C. Li, Generalized soap bubbles and the topology of manifolds with positive scalar curvature. arXiv:2008.11888 (2020), to appear in Ann. of Math.
		\bibitem[CLL23]{CLL23} O. Chodosh, C. Li, and Y. Liokumovich. Classifying sufficiently connected PSC
		manifolds in 4 and 5 dimensions. Geom. Topol. 27 (2023), no. 4, 1635–1655.
            \bibitem[CLSZ21]{CLSZ21} J. Chen, P. Liu, Y. Shi, J. Zhu, Incompressible hypersurface, positive scalar curvature and positive mass theorem, arXiv:2112.14442 (2021).
            \bibitem[CRZ23]{CRZ23} S. Cecchini, D. R{\"{a}}de, R. Zeidler, Nonnegative scalar curvature on manifolds with at least two ends.  J. Topol. \textbf{16} (2023) no.3, 855-876.
		\bibitem[CZ21]{CZ21}  S. Cecchini, R. Zeidler, Scalar and mean curvature comparison via the Dirac operator, arXiv: 2103.06833 (2021), to appear in Geom. Topol.
		\bibitem[Dra06]{Dra06} A. N. Dranishnikov. On hypereuclidean manifolds. Geom. Dedicata 117 (2006), pp. 215–231.
		\bibitem[Eng18]{Eng18} A. Engel,  Wrong way maps in uniformly finite homology and homology of groups. J. Homotopy Relat. Struct. 13, 423–441 (2018)
            \bibitem[Fr23]{Fr23} S. Friedl, Topology (2023). https://friedl.app.uni-regensburg.de/papers/1at-total-public-july-9-2023.pdf
		\bibitem[GL83]{GL83} M. Gromov and H. B. Lawson, Jr. Positive scalar curvature and the Dirac
		operator on complete Riemannian manifolds. Inst. Hautes {\'{E}}tudes Sci. 
		Publ. Math. \textbf{58} (1983), 83–196.
		\bibitem[Gro86]{Gro86} M. Gromov. Large Riemannian manifolds. In: Shiohama K., Sakai T.,
		Sunada T. (eds.), Curvature and Topology of Riemannian Manifolds,
		Lecture Notes in Mathematics, Vol. 1201, Springer-Verlag 1986,
		pp. 108–122.
		\bibitem[Gro18]{Gro18} M. Gromov, Metric inequalities with scalar curvature. Geom. Funct. Anal.
		\textbf{28} (2018), no.3, 645–726.
		\bibitem[Gro20]{Gro20} M. Gromov, No metrics with positive scalar curvatures on aspherical 5-
		manifolds. arXiv:2009.05332 (2020).
		\bibitem[Gro23]{Gro23}
		M. Gromov, Four lectures on scalar curvature. World Scientific Publishing Co. Pte. Ltd., Hackensack, NJ, 2023, 1--514.
		\bibitem[GZ21]{GZ21}
		M. Gromov and J. Zhu, Area and Gauss-Bonnet inequalities with scalar curvature. arXiv: 2112.07245, (2021).
		\bibitem[GXY20]{GXY20} H. Guo, Z. Xie, G. Yu, Quantitative K-theory, positive scalar curvature, and band width. arXiv: 2010.01749 (2020). in {\it Perspectives in Scalar Curvature}.
		\bibitem[HS06]{HS06} B. Hanke, Thomas Schick, Enlargeability and index theory. J.Differential Geom. (2) \textbf{74} (2006), 293-320.
		\bibitem[HPS15]{HPS15} B. Hanke, D. Pape, T. Schick, Codimension two index obstructions to positive scalar
		curvature, Ann. Inst. Fourier (Grenoble) \textbf{65} 
		(2015), 2681–2710.
		\bibitem[Hat02]{Hat02} A. Hatcher, Algebraic topology, Cambridge University Press, Cambridge (2002).
		\bibitem[He23]{He23} S. He. Twisted $S^1$ stability and positive scalar curvature obstruction on fiber bundles. arXiv: 2303.12614 (2023).
		\bibitem[HZ23]{HZ23} S. He, J. Zhu, A note on rational homology vanishing theorem for hypersurfaces in aspherical manifolds. arXiv: 2311.14008 (2023).
		\bibitem[Ku23]{Ku23} Y. Kubota, Band Width and the Rosenberg Index, International Mathematics Research Notices, Volume 2023, Issue 11, June 2023, Pages 9844–9860.
		\bibitem[NSZ21]{NSZ21} M. Nitsche, T. Schick, R. Zeidler. Transfer maps in generalized group homology via submanifolds. Documenta Mathematica 26 (2021): 947-979.
		\bibitem[R23]{R23} D. R{\"{a}}de, Scalar and mean curvature comparison via $\mu$-bubbles. Calc. Var. \textbf{62}, 187 (2023)
		\bibitem[Ros83]{Ros83} J. Rosenberg, $C^*$-algebras, positive scalar curvature, and the Novikov conjecture. Inst. Hautes {\'{E}}tudes Sci. Publ. Math. \textbf{58} (1983), 197-212.
		\bibitem[Ros07]{Ros07}J. Rosenberg, Manifolds of positive scalar curvature: a progress report in Survey in Differential Geometry. Vol.
		XI, Surv. Differ. Geom. 11, International Press, Sommerville, MA (2007), pp. 259-294.
		\bibitem[Sch98]{Sch98} T. Schick, A counterexample to the (unstable) Gromov-Lawson-Rosenberg conjecture, Topology \textbf{37} (1998), no. 6, 1165-1168.
		\bibitem[SY79a]{SY79a} R. Schoen and S. T. Yau, Existence of incompressible minimal surfaces and
		the topology of three-dimensional manifolds with nonnegative scalar curvature, Ann.
		of Math. (2) \textbf{110} (1979), no. 1, 127–142.
		\bibitem[SY79b]{SY79b} R. Schoen and S. T. Yau, On the structure of manifolds with positive scalar
		curvature. Manuscripta Math. \textbf{28} (1979), no.1-3, 159–183.
		\bibitem[SY87]{SY87} R. Schoen and S. T. Yau, The structure of manifolds with positive scalar curvature. In: Directions in partial
		differential equations (Madison, WI, 1985), volume 54 of Publ. Math. Res. Center Univ. Wisconsin,
		pages 235–242. Academic Press, Boston, MA (1987)
		\bibitem[WXY21]{WXY21} J. Wang, Z. Xie, G. Yu. A proof of Gromov's cube inequality on scalar curvature, arXiv: 2105.12054. (2021). To appear in J. Differential Geom.
		\bibitem[WY23]{WY23} T. Wang, X. Yao, Generalized $S^1$ stability theorem, arXiv: 2309.13865 (2023)
		\bibitem[Yu98]{Yu98} G. Yu. The Novikov conjecture for groups with finite asymptotic dimension. Ann.of Math. (2), \textbf{147} (2):325–355 (1998).
		\bibitem[Zei17]{Zei17} R. Zeidler. An index obstruction to positive scalar curvature
		on fiber bundles over aspherical manifolds. Algebraic \& Geometric Topology \textbf{17} (2017), 3081-3094.
		\bibitem[Zei20]{Zei20} R. Zeidler. Width, largeness and index theory. SIGMA Symmetry Integrability Geom. Methods
		Appl. \textbf{16} (2020), no. 127, 15.
		\bibitem[Zei22]{Zei22} R. Zeidler. Band width estimates via the Dirac operator. J. Differential Geom. \textbf{122}(1): 155-183 (2022).
		\bibitem[Zhu20]{Zhu20} J. Zhu. Rigidity of area-minimizing 2-spheres in n-manifolds with positive
		scalar curvature. Proc. Amer. Math. Soc. \textbf{148} (2020), no. 8, 3479–3489.
		\bibitem[Zhu21]{Zhu21} J. Zhu. Width estimate and doubly warped product. Trans. Amer. Math. Soc.
		\textbf{374} (2021), no. 2, 1497–1511. 
		\bibitem[Zhu23]{Zhu23} J. Zhu, The Gauss-Bonnet inequality beyond aspherical conjecture. Math. Ann. \textbf{386} (2023), no. 3-4, 2321–2347.
		
	\end{thebibliography}
\end{document}